\numberwithin{equation}{section}
\numberwithin{figure}{section}
\theoremstyle{plain}
\newtheorem{thm}{\protect\theoremname}[section]
  \theoremstyle{definition}
  \newtheorem{defn}[thm]{\protect\definitionname}
  \theoremstyle{plain}
  \newtheorem{prop}[thm]{\protect\propositionname}
  \theoremstyle{remark}
  \newtheorem{rem}{\protect\remarkname}[section]
  \theoremstyle{plain}
  \theoremstyle{plain}
  \newtheorem{cor}[thm]{\protect\corollaryname}
  \theoremstyle{plain}
  \newtheorem{lem}[thm]{\protect\lemmaname}
  \providecommand{\corollaryname}{Corollary}
  \providecommand{\definitionname}{Definition}
  \providecommand{\factname}{Fact}
  \providecommand{\lemmaname}{Lemma}
  \providecommand{\propositionname}{Proposition}
  \providecommand{\remarkname}{Remark}
\providecommand{\theoremname}{Theorem}
\begin{document}

\title[Description of Stability for Linear Time-Invariant Systems Based on Curvature]
{Description of Stability for Linear Time-Invariant Systems Based on the First Curvature}

\author[Y. Wang]{Yuxin Wang $^1$}
\author[H. Sun]{Huafei Sun $^{1,*}$}
\author[S. Huang]{Shoudong Huang $^2$}
\author[Y. Song]{Yang Song $^1$}

\thanks{This subject is supported by the National Natural Science Foundations of China (No. 61179031.)}
\thanks{$^1$ School of Mathematics and Statistics, Beijing Institute of Technology, Beijing 100081, P.~R.~China}
\thanks{$^2$ Faculty of Engineering and Information Technology, University of Technology Sydney, Australia}
\thanks{$^*$ Huafei Sun is the corresponding author}
\thanks{E-mail: wangyuxin@bit.edu.cn,~ huafeisun@bit.edu.cn,~ sdhuang@eng.uts.edu.au,~ frank230316@126.com}

\begin{abstract}
This paper focuses on using the first curvature $\kappa(t)$ of trajectory to describe the stability of linear time-invariant system.
We extend the results for two and three-dimensional systems [Y.~Wang, H.~Sun, Y.~Song et al., arXiv:1808.00290] to $n$-dimensional systems.
We prove that for a system $\dot{r}(t)=Ar(t)$,
(i) if there exists a measurable set whose Lebesgue measure is greater than zero,
such that for all initial values in this set,
$\lim\limits_{t\to+\infty}\kappa(t)\neq0$ or $\lim\limits_{t\to+\infty}\kappa(t)$ does not exist,
then the zero solution of the system is stable;
(ii) if the matrix $A$ is invertible,
and there exists a measurable set whose Lebesgue measure is greater than zero,
such that for all initial values in this set,
$\lim\limits_{t\to+\infty}\kappa(t)=+\infty$,
then the zero solution of the system is asymptotically stable.
\end{abstract}

\keywords{linear time-invariant systems, stability, asymptotic stability, first curvature}

\subjclass[2000]{53A04 93C05 93D05 93D20}

\maketitle

\section{Introduction}

It is well known that stability is an important subject in the control theory,
and curvature is the core concept of differential geometry.
We wish to establish the relationship between the curvatures of state trajectories and the stability of linear systems.
In fact, in \cite{Wang} the authors gave the description of stability
for two and three-dimensional linear time-invariant systems $\dot{r}(t)=Ar(t)$ based on the curvature and torsion of curve $r(t)$.

In this paper, we focus on the higher dimensional systems and give them a geometric description for the stability. To achieve this goal, the definition of the higher curvatures of curves in $\mathbb{R}^n$ by Gluck \cite{Gluck} is used,
where the first and second curvature are the generalization of curvature and torsion of curves in $\mathbb{R}^3$, respectively.
We will develop the methods arised in \cite{Wang},
and use the first curvature to describe the stability of the zero solution of linear time-invariant system $\dot{r}(t)=Ar(t)$.

Our main results are as follows.

\begin{thm} \label{thm main}
Suppose that $\dot{r}(t)=Ar(t)$ is a linear time-invariant system, where $A$ is an $n\times n$ real matrix, $r(t)\in\mathbb{R}^n$, and $\dot{r}(t)$ is the derivative of $r(t)$.
Denote by $\kappa(t)$ the first curvature of trajectory of a solution $r(t)$. We have
\par
$(1)$ if there exists a measurable set $E_1\subseteq \mathbb{R}^n$ whose Lebesgue measure is greater than $0$, such that for all $r(0)\in E_1$, $\lim\limits_{t\to+\infty}\kappa(t)\neq0$ or $\lim\limits_{t\to+\infty}\kappa(t)$ does not exist, then the zero solution of the system is stable;
\par
$(2)$ if $A$ is invertible, and there exists a measurable set $E_2\subseteq \mathbb{R}^n$ whose Lebesgue measure is greater than $0$, such that for all $r(0)\in E_2$, $\lim\limits_{t\to+\infty}\kappa(t)=+\infty$, then the zero solution of the system is asymptotically stable.
\end{thm}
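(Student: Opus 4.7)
The plan is to prove both statements by contraposition, using that $r(t)=e^{At}r(0)$ gives $\dot r(t)=Ar(t)$ and $\ddot r(t)=A^{2}r(t)$, so that the Frenet--Gluck first curvature satisfies
\begin{equation*}
\kappa(t)^{2}=\frac{\|Ar(t)\|^{2}\|A^{2}r(t)\|^{2}-(Ar(t)\cdot A^{2}r(t))^{2}}{\|Ar(t)\|^{6}},
\end{equation*}
which is a rational function of $r(t)$, well defined whenever $r(0)\notin\ker A$ (a Lebesgue-null set). My strategy is to put $A$ in real Jordan canonical form, identify an explicit proper invariant subspace $V_{0}\subset\mathbb{R}^{n}$ outside of which the trajectory is asymptotically dominated by the Jordan block(s) of largest growth rate $t^{k-1}e^{\alpha_{\max}t}$, where $\alpha_{\max}$ is the maximum real part of an eigenvalue of $A$ and $k$ is the maximum Jordan block size among eigenvalues with real part $\alpha_{\max}$.

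For part~(1) the contrapositive reads: if the zero solution is unstable, then $\kappa(t)\to 0$ for almost every initial condition. Instability means $\alpha_{\max}>0$, or $\alpha_{\max}=0$ and $k\geq 2$. For $r(0)\notin V_{0}$, I would factor the common scaling $t^{k-1}e^{\alpha_{\max}t}$ out of $Ar(t)$ and $A^{2}r(t)$ and show that the resulting renormalized vectors are asymptotically parallel to the same dominant direction in the leading generalized eigenspace (a constant direction if the dominant eigenvalue is real, a bounded rotation if it is complex). This parallelism cancels the leading-order contribution in the numerator $\|Ar\|^{2}\|A^{2}r\|^{2}-(Ar\cdot A^{2}r)^{2}$, making the numerator grow strictly slower than the denominator $\|Ar\|^{6}$, and hence $\kappa\to 0$ on the full-measure set $\mathbb{R}^{n}\setminus V_{0}$, contradicting the hypothesis on $E_{1}$.

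For part~(2) I would again argue by contraposition. If $A$ is invertible and the zero solution is not asymptotically stable, then some eigenvalue has nonnegative real part and no eigenvalue is zero. If any eigenvalue has strictly positive real part, part~(1) already forces $\kappa\to 0$ almost everywhere, so the set in question is null. Otherwise every eigenvalue has nonpositive real part and some is purely imaginary; the non-semisimple subcase is handled by the same Jordan-block argument as in part~(1). In the remaining subcase all purely imaginary eigenvalues are semisimple; decomposing $\mathbb{R}^{n}=W_{c}\oplus W_{s}$ into the center and stable subspaces, for $r(0)\notin W_{s}$ (again a proper, hence null, subspace) the component in $W_{c}$ drives $r(t)$, $Ar(t)$ and $A^{2}r(t)$ to stay bounded above and below (using that $e^{At}$ restricted to $W_{c}$ is, in a suitable inner product, a one-parameter group of isometries and $A|_{W_{c}}$ is invertible), so $\kappa(t)$ is bounded and cannot tend to $+\infty$.

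I expect the main obstacle to be the sharp asymptotic estimate in part~(1) when the dominant Jordan block has size $k\geq 2$ or is complex, since $Ar$ and $A^{2}r$ are then asymptotically parallel to leading order and one must identify the subleading correction precisely to control the angle between them, and to check that this correction is not identically zero for $r(0)\notin V_{0}$. The remaining ingredients---dominance of the fastest-growing Jordan block for generic initial conditions, and the isometric action of $e^{At}$ on the center subspace used in the last subcase of part~(2)---are comparatively routine, and the two- and three-dimensional calculations of \cite{Wang} can serve as templates for the bookkeeping.
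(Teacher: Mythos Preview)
Your overall strategy---contraposition, reduction to real Jordan canonical form, and case analysis on the block(s) carrying the maximal real part---is exactly the route the paper takes, and your case split in part~(2) matches the paper's Lemmas~5.1--5.3 almost verbatim. Two points, however, need attention.

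First, a genuine missing step: you write ``put $A$ in real Jordan canonical form,'' but the first curvature is \emph{not} invariant under a general linear change of coordinates $v=Pr$. The paper addresses this in its Theorem~3.1: using the singular value decomposition $P=U\Delta V^{\mathrm T}$ and the Cauchy--Binet expression for the parallelotope volumes $V_k$, one obtains two-sided bounds
\[
\frac{\delta_n^{2}}{\delta_1^{3}}\,\kappa_r(t)\;\le\;\kappa_v(t)\;\le\;\frac{\delta_1^{2}}{\delta_n^{3}}\,\kappa_r(t),
\]
so that the properties ``$\kappa\to 0$'', ``$\kappa\to+\infty$'' and ``$\kappa$ bounded'' transfer between equivalent systems. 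Without something like this, the Jordan-form computation says nothing about the original trajectory's curvature; your proposal should make this reduction explicit rather than tacit.

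Second, a smaller imprecision: the phrase ``asymptotically parallel to the same dominant direction \dots\ a bounded rotation if it is complex'' is internally inconsistent. For a dominant C2 or CH block the renormalized vectors $Ar/\|Ar\|$ and $A^{2}r/\|A^{2}r\|$ keep a \emph{fixed nonzero} angle (they rotate rigidly), so no cancellation in the numerator occurs. What actually forces $\kappa\to 0$ there is pure growth-rate comparison: the numerator $V_2^2$ is $O\!\big(e^{4\alpha_{\max}t}t^{4(k-1)}\big)$ while the denominator $V_1^6$ is $\Theta\!\big(e^{6\alpha_{\max}t}t^{6(k-1)}\big)$, giving $\kappa^2=O\!\big(e^{-2\alpha_{\max}t}t^{-2(k-1)}\big)\to 0$ whenever $\alpha_{\max}>0$ or ($\alpha_{\max}=0$ and $k\ge 2$). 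The paper carries this out by explicit degree/exponent bookkeeping on $V_1^6$ and $V_2^2$ (its Lemmas~5.1 and~5.2); your ``parallelism'' heuristic is the correct picture only in the real-eigenvalue case and should be replaced by this uniform growth-rate estimate.
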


The paper is organized as follows.
In Section \ref{Section Preliminaries}, we review some basic concepts and propositions.
In Section \ref{Section Relationship}, we establish the relationship between curvatures of trajectories of two equivalent systems.
In Section \ref{Section Jordan Blocks}, we discuss four types of real Jordan blocks.
In Section \ref{Section General}, we consider the case of real Jordan canonical form, and complete the proof of Theorem \ref{thm main}.
Several examples are given in Section \ref{Section Examples}.
Finally, Section \ref{Section Conclusion} concludes the paper.

\section{Preliminaries}\label{Section Preliminaries}
In this paper, the norm $\|x\|$ denotes the Euclidean norm of $x=\left(x_1,x_2,\cdots,x_n\right)^\mathrm{T}\in\mathbb{R}^n$, namely, $\|x\|=\sqrt{\sum_{i=1}^n x_i^2}$.
Denote by $\det A$ the determinant of matrix $A$.
The eigenvalues of matrix $A$ are denoted by $\lambda_i(A)\,(i=1,2,\cdots,n)$,
and the set of eigenvalues of matrix $A$ is denoted by $\sigma(A)$.

The following concepts and results can be found in \cite{Carmo,Chen,Gluck,Horn,Lyapunov,Marsden,Perko}.

\subsection{Linear Time-Invariant Systems and Stability}

\begin{defn}[\!\!\cite{Perko}]
The system of ordinary differential equations
\begin{align}\label{system1}
\dot{r}(t)=Ar(t)
\end{align}
is called a linear time-invariant system, where $A$ is an $n\times n$ real constant matrix, $r(t)\in\mathbb{R}^n$, and $\dot{r}(t)$ is the derivative of $r(t)$.
\end{defn}

\begin{prop}[\!\!\cite{Perko}]\label{ODE}
Let $A$ be an $n\times n$ real matrix. Then for a given $r_0\in\mathbb{R}^n$, the initial value problem
\begin{align}
\left\{
\begin{aligned}\label{system}
\dot{r}(t)&=Ar(t),\\
r(0)&=r_0
\end{aligned}
\right.
\end{align}
has a unique solution given by
\begin{align}\label{align solution}
r(t)=\mathrm{e}^{tA}r_0.
\end{align}
\par
The curve $r(t)$ is called the trajectory of system (\ref{system}) with the initial value $r_0\in\mathbb{R}^n$.
\end{prop}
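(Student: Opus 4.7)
The plan is to prove both existence, by directly verifying that $r(t)=e^{tA}r_{0}$ satisfies the initial value problem, and uniqueness, by a standard multiplicative-exponential trick. The essential tool is the matrix exponential defined by the power series $e^{tA}:=\sum_{k=0}^{\infty}\frac{t^{k}A^{k}}{k!}$, so the first order of business is to make sure this series is well-defined, continuous, and differentiable in $t$.

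First I would fix any submultiplicative matrix norm $\|\cdot\|$ on $\mathbb{R}^{n\times n}$ and observe that $\sum_{k=0}^{\infty}\frac{|t|^{k}\|A\|^{k}}{k!}=e^{|t|\|A\|}<\infty$. By the Weierstrass $M$-test, the defining series for $e^{tA}$ converges absolutely and uniformly on every compact interval $[-T,T]$, so $t\mapsto e^{tA}$ is a well-defined continuous matrix-valued function. Uniform convergence on compacts of the term-by-term differentiated series $\sum_{k=1}^{\infty}\frac{t^{k-1}A^{k}}{(k-1)!}$, again dominated by $\|A\|\,e^{|t|\|A\|}$, justifies termwise differentiation and yields $\frac{d}{dt}e^{tA}=Ae^{tA}=e^{tA}A$, where the commutation follows from pulling either one copy of $A$ out of each term of the series. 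In particular $r(t):=e^{tA}r_{0}$ satisfies $\dot r(t)=Ae^{tA}r_{0}=Ar(t)$, and since $e^{0\cdot A}=I$ from the $k=0$ term, $r(0)=r_{0}$. This establishes existence.

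For uniqueness, suppose $\tilde r(t)$ is any $C^{1}$ solution on $[0,+\infty)$ and define $\phi(t):=e^{-tA}\tilde r(t)$. By the product rule together with $\frac{d}{dt}e^{-tA}=-Ae^{-tA}$, which follows from the same termwise differentiation applied to the series for $e^{-tA}$, one obtains
\begin{align*}
\dot\phi(t)=-Ae^{-tA}\tilde r(t)+e^{-tA}\,A\tilde r(t)=0,
\end{align*}
since $e^{-tA}$ commutes with $A$ (again read off the series). Hence $\phi(t)\equiv\phi(0)=r_{0}$, and multiplying on the left by $e^{tA}$, using the identity $e^{tA}e^{-tA}=I$ proved via the Cauchy product of the two series for commuting matrices, gives $\tilde r(t)=e^{tA}r_{0}$, as required.

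The main obstacle is the analytic bookkeeping: every interchange of sum, limit and derivative must be justified by uniform convergence on compact time intervals, and the commutation identities $Ae^{tA}=e^{tA}A$ and $e^{tA}e^{-tA}=I$ must be derived from the power series rather than assumed. Once those ingredients are in place the argument is essentially immediate. An alternative route would be to invoke the Picard--Lindel\"of theorem for the globally Lipschitz field $f(r)=Ar$, which yields unique continuation on $[0,+\infty)$ at once; however, the direct approach above is preferable here because it simultaneously produces the explicit formula $e^{tA}r_{0}$ that appears in the statement.
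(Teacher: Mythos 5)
Your proof is correct and complete. The paper itself gives no proof of this proposition---it is quoted directly from the reference \cite{Perko}---and your argument (existence by termwise differentiation of the power series for $\mathrm{e}^{tA}$, uniqueness via the constancy of $\phi(t)=\mathrm{e}^{-tA}\tilde r(t)$) is precisely the standard one found in that source, so there is nothing to reconcile.
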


\begin{defn}[\!\!\cite{Chen,Marsden}]
The solution $r(t)\equiv0$ of differential equations (\ref{system1}) is called the zero solution of the linear time-invariant system.
If for every constant $\varepsilon >0$,
there exists a $\delta=\delta(\varepsilon)>0$,
such that $\|r(0)\|<\delta $ implies that $\|r(t)\|<\varepsilon$ for all $t\in [0,+\infty)$,
where $r(t)=\mathrm{e}^{tA}r(0)$ is a solution of (\ref{system1}),
and $r(0)$ is the initial value of $r(t)$,
then we say that the zero solution of system (\ref{system1}) is stable.
If the zero solution is not stable, then we say that it is unstable.
\par
Suppose that the zero solution of system (\ref{system1}) is stable,
and there exists a $\delta_1\,(0<\delta_1\leqslant\delta)$,
such that $\|r(0)\|<\delta_1$ implies that $\lim\limits_{t\to+\infty}r(t)=0$,
then we say that the zero solution of system (\ref{system1}) is asymptotically stable.
\end{defn}

\begin{prop}[\!\!\cite{Chen}]\label{prop asy.stable}
The zero solution of system (\ref{system1}) is stable if and only if all eigenvalues of matrix $A$ have nonpositive real parts, namely,
\begin{align*}
\mathrm{Re}\{\lambda_i(A)\}\leqslant 0\quad (i=1,2,\cdots,n),
\end{align*}
and the eigenvalues with zero real parts correspond only to the simple elementary factors of matrix $A$.
\par
The zero solution of system (\ref{system1}) is asymptotically stable if and only if all eigenvalues of matrix $A$ have negative real part, namely,
\begin{align*}
\mathrm{Re}\{\lambda_i(A)\}<0\quad (i=1,2,\cdots,n).
\end{align*}
\end{prop}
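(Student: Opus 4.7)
The plan is to prove both parts by contraposition, using Proposition~\ref{prop asy.stable} to translate the stability conclusions into spectral conditions on $A$. For part~(1), assuming the zero solution is unstable, Proposition~\ref{prop asy.stable} tells us that $A$ has either an eigenvalue with positive real part or a purely imaginary eigenvalue whose elementary divisor is non-simple. I will show that under this hypothesis the set
\[
S_1 = \{r_0 \in \mathbb{R}^n : \lim_{t \to +\infty}\kappa(t) \neq 0 \text{ or the limit does not exist}\}
\]
is Lebesgue null, contradicting the existence of any $E_1 \subseteq S_1$ with positive measure. For part~(2), assuming $A$ is invertible and the zero solution is not asymptotically stable, Proposition~\ref{prop asy.stable} together with invertibility yields an eigenvalue with nonnegative real part and no zero eigenvalue; I will show the analogous set $S_2 = \{r_0 \in \mathbb{R}^n : \lim_{t\to+\infty}\kappa(t) = +\infty\}$ is also Lebesgue null.

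The first step is the change-of-basis reduction carried out in Section~\ref{Section Relationship}. If $P$ is a real invertible matrix and $B = PAP^{-1}$, the trajectory of $\dot y = By$ from $Pr_0$ is $y(t) = Pr(t)$, and I will compare the curvatures $\kappa,\tilde\kappa$ of the two trajectories in such a way that the three statements ``$\lim\kappa = 0$'', ``$\lim\kappa$ is nonzero or fails to exist'', and ``$\lim\kappa = +\infty$'' are preserved under $P$. Because invertible linear maps preserve Lebesgue measurability and null sets, this reduces both parts to the case in which $A$ is already in real Jordan canonical form.

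The heart of the argument, developed in Sections~\ref{Section Jordan Blocks}--\ref{Section General}, is a block-by-block asymptotic analysis of
\[
\kappa(t) = \frac{\sqrt{\|\dot r(t)\|^{2}\|\ddot r(t)\|^{2} - \langle \dot r(t),\ddot r(t)\rangle^{2}}}{\|\dot r(t)\|^{3}}
\]
for $r(t) = \mathrm{e}^{tA}r_0$. Using the explicit form of $\mathrm{e}^{tJ}$ on each of the four real Jordan block types, one reads off the leading-order behavior of $\dot r(t)$, $\ddot r(t)$, and the Gram-type numerator of $\kappa(t)$ as $t \to +\infty$. For a direct sum of blocks, the asymptotics are governed by the ``dominant'' block, namely the one of largest real-part exponent and, within that family, largest nilpotent size. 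In the presence of a positive-real-part eigenvalue the dominant component drives $r(t)$ along an essentially straight exponentially growing ray, forcing $\kappa(t) \to 0$; in the non-asymptotic-stability configurations compatible with invertible $A$, the dominant behavior keeps $\kappa(t)$ bounded away from $+\infty$.

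The main obstacle I anticipate is controlling the exceptional initial values at which the presumed dominant contribution degenerates. For each spectral configuration to be ruled out, the genericity condition ``the projection of $r_0$ onto the dominant invariant subspace is nonzero'' carves out the complement of a proper linear subspace of $\mathbb{R}^n$, which has Lebesgue measure zero; only finitely many blocks are in play, so the finite union of such exceptional subspaces remains null. The delicate bookkeeping will lie in verifying, configuration by configuration, that once this generic condition holds, $\kappa(t)$ indeed satisfies $\lim_{t\to+\infty}\kappa(t)=0$ in the unstable case of part~(1) and $\limsup_{t\to+\infty}\kappa(t)<+\infty$ in the non-asymptotic-stability case of part~(2). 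Once this is established, the contrapositives of both parts of Theorem~\ref{thm main} follow by assembling the results of Sections~\ref{Section Relationship}--\ref{Section General}.
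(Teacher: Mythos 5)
You have proved the wrong statement. The statement under review is Proposition~\ref{prop asy.stable}, the classical eigenvalue criterion for stability of $\dot r=Ar$ (cited by the paper from [Chen] and stated without proof). Your proposal is instead an outline of the proof of Theorem~\ref{thm main}, and it explicitly \emph{invokes} Proposition~\ref{prop asy.stable} in its very first step (``using Proposition~\ref{prop asy.stable} to translate the stability conclusions into spectral conditions on $A$''). Read as a proof of Proposition~\ref{prop asy.stable}, the argument is circular; read as what it actually is, it is an answer to a different question. None of the curvature machinery, the measure-zero bookkeeping, or the Jordan-block asymptotics of $\kappa(t)$ is relevant to establishing the eigenvalue criterion itself.

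A proof of the actual statement is much more elementary and does not mention curvature at all: write $A=P^{-1}JP$ with $J$ in (real) Jordan form, so $\mathrm{e}^{tA}=P^{-1}\mathrm{e}^{tJ}P$ and every entry of $\mathrm{e}^{tJ}$ is a finite linear combination of terms $t^{k}\mathrm{e}^{\mathrm{Re}(\lambda)t}\cos(bt)$, $t^{k}\mathrm{e}^{\mathrm{Re}(\lambda)t}\sin(bt)$ with $0\leqslant k\leqslant n_j-1$ for a block of size $n_j$. Stability is equivalent to $\sup_{t\geqslant 0}\|\mathrm{e}^{tA}\|<\infty$ and asymptotic stability to $\|\mathrm{e}^{tA}\|\to 0$; such terms are bounded on $[0,+\infty)$ exactly when $\mathrm{Re}(\lambda)<0$, or $\mathrm{Re}(\lambda)=0$ and $k=0$ (i.e., the elementary factors for purely imaginary eigenvalues are simple), and they all tend to $0$ exactly when every $\mathrm{Re}(\lambda)<0$. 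For the converse directions one exhibits an initial value in the relevant generalized eigenspace whose solution contains an unbounded term $t^{k}\mathrm{e}^{\mathrm{Re}(\lambda)t}$ with $\mathrm{Re}(\lambda)>0$, or $\mathrm{Re}(\lambda)=0$ and $k\geqslant 1$. That is the argument the citation to [Chen] stands in for, and it is what your proposal needed to supply.
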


\begin{prop}[\!\!\cite{Chen}]
Suppose that $A$ and $B$ are two $n\times n$ real matrices, and $A$ is similar to $B$, namely, there exists an $n\times n$ real invertible matrix $P$, such that $A=P^{-1}BP$. For system (\ref{system1}), let $v(t)=Pr(t)$. Then the system after the transformation becomes
\begin{align}\label{system2}
\dot{v}(t)=Bv(t).
\end{align}
System (\ref{system2}) is said to be equivalent to system (\ref{system1}), and $v(t)=Pr(t)$ is called an equivalence transformation.
\end{prop}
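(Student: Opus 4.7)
The plan is to establish the claimed evolution equation for $v(t)=Pr(t)$ by a direct computation, relying on only three ingredients: (a) the linearity of differentiation together with the fact that $P$ is a constant (time-independent) matrix, so $\tfrac{d}{dt}(Pr(t))=P\dot r(t)$; (b) the original equation $\dot r(t)=Ar(t)$ from (\ref{system1}); and (c) the similarity relation $A=P^{-1}BP$ assumed in the hypothesis. The strategy is simply to differentiate the defining relation $v(t)=Pr(t)$, substitute for $\dot r(t)$, and then use the similarity to re-express the result in terms of $v(t)$ and $B$.

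Carrying this out in order, I would write $\dot v(t)=P\dot r(t)=P\,A\,r(t)=P(P^{-1}BP)r(t)=B(Pr(t))=Bv(t)$, which is exactly the equation (\ref{system2}). To justify calling $v(t)=Pr(t)$ an equivalence transformation, I would then note that since $P$ is invertible, the linear map $r\mapsto Pr$ is a bijection of $\mathbb R^n$ with inverse $v\mapsto P^{-1}v$. Combined with Proposition \ref{ODE}, this sets up a one-to-one correspondence between solutions of $\dot r=Ar$ with initial value $r_0$ and solutions of $\dot v=Bv$ with initial value $Pr_0$, so the two systems carry the same dynamical information.

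There is essentially no obstacle here: the statement is a standard linear change-of-variables fact, and the only point that deserves explicit mention is the time-independence of $P$, which is what allows $\tfrac{d}{dt}$ to pass through the matrix multiplication in step (a). Everything else is a short chain of routine matrix manipulations that uses $PP^{-1}=I$ at a single place.
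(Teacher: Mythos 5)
Your computation is correct and is the standard (essentially the only) argument: differentiate $v(t)=Pr(t)$ using the time-independence of $P$, substitute $\dot r=Ar$ and $A=P^{-1}BP$, and cancel $PP^{-1}$ to obtain $\dot v=Bv$. The paper itself offers no proof — it states this proposition as a cited fact from \cite{Chen} — so there is nothing to compare beyond noting that your argument fills in exactly the routine verification the citation presupposes.
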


\begin{prop}[\!\!\cite{Chen}]
Let $A$ and $B$ be two $n\times n$ real matrices, and $A$ is similar to $B$. Then the zero solution of the system $\dot{r}(t)=Ar(t)$ is (asymptotically) stable if and only if the zero solution of the system $\dot{v}(t)=Bv(t)$ is (asymptotically) stable.
\end{prop}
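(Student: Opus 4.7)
The plan is to exploit the fact that the equivalence transformation $v(t)=Pr(t)$ is a bijective bounded linear map of $\mathbb{R}^n$, so it only distorts the Euclidean norm by bounded factors. First I would set $M=\|P\|$ and $M'=\|P^{-1}\|$, both finite strictly positive constants depending only on $P$, and derive the bilateral estimate
\[
M^{-1}\|v(t)\|\leq\|r(t)\|\leq M'\|v(t)\|,
\]
valid for every $t\geq 0$ (and in particular at $t=0$). This sandwich lets the norms of the $r$- and $v$-trajectories control each other uniformly in time, which is the only analytic fact the whole argument uses.

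For the stability half, I would assume the zero solution of $\dot r=Ar$ is stable and translate the $(\varepsilon,\delta)$-condition through the map $P$. Given $\varepsilon>0$, the idea is to apply stability of the $r$-system to the reduced tolerance $\varepsilon/M$, obtaining $\delta_r=\delta(\varepsilon/M)>0$; then I would set $\delta_v=\delta_r/M'$, so that $\|v(0)\|<\delta_v$ forces $\|r(0)\|\leq M'\|v(0)\|<\delta_r$, which in turn forces $\|r(t)\|<\varepsilon/M$ and hence $\|v(t)\|\leq M\|r(t)\|<\varepsilon$ for every $t\geq 0$. The converse implication is symmetric, since $B=PAP^{-1}$ is similar to $A$ via the matrix $P^{-1}$ and the roles of $(M,M')$ simply swap.

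For asymptotic stability, once ordinary stability has been transferred, the only remaining task is to carry over the decay condition $\lim_{t\to+\infty}r(t)=0$. The bilateral estimate makes this immediate: $\|r(t)\|\to 0$ iff $\|v(t)\|\to 0$, because the two quantities are pinched between constant multiples of one another. I would just have to check that the radius $\delta_1$ supplied by asymptotic stability of the $r$-system produces a strictly positive $\delta_1'=\delta_1/M'$ for the $v$-system, which follows from the same scaling argument as in the stability half.

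The statement is essentially a change-of-coordinates formality, so there is no deep obstacle. The one place requiring care is bookkeeping: keeping the two independent $\varepsilon$--$\delta$ chains (one for ordinary stability and one for the asymptotic variant) consistent with the scalings by $M$ and $M'$, so that each prescribed radius comes out strictly positive and each inequality points the right way. Apart from that, every step is a direct application of the definitions together with the operator-norm bounds $\|Pr\|\leq M\|r\|$ and $\|P^{-1}v\|\leq M'\|v\|$.
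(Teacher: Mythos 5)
Your argument is correct. Note first that the paper itself gives no proof of this proposition: it is quoted from the reference [Chen] as a known fact, so there is no in-paper argument to compare against line by line. The route the paper's own machinery suggests is spectral: by Proposition 2.4 stability is characterized entirely by the eigenvalues of the system matrix (nonpositive real parts, with zero-real-part eigenvalues attached only to simple elementary factors) and asymptotic stability by strictly negative real parts; since similar matrices have the same eigenvalues and the same elementary divisors, the equivalence is immediate. Your proof instead works directly from the $(\varepsilon,\delta)$ definition via the operator-norm sandwich $M^{-1}\|v(t)\|\leq\|r(t)\|\leq M'\|v(t)\|$, together with the (implicitly used, and true) fact that $r(t)\mapsto Pr(t)$ is a bijection between the solution sets of the two systems. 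The bookkeeping is right: $\delta_v=\delta(\varepsilon/M)/M'$ is strictly positive, the inequalities all point the correct way, and the decay condition transfers through the two-sided bound, so the asymptotic case follows. What your approach buys is that it is elementary and self-contained --- it needs no Jordan theory or eigenvalue characterization, and it would work verbatim for any invertible change of coordinates and any pair of equivalent norms; what the spectral route buys is brevity, since given Proposition 2.4 the whole statement reduces to the invariance of the spectrum and of the elementary divisors under similarity.
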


\subsection{Curvatures of Curves in $\mathbb{R}^n$}

\begin{defn}[\!\!\cite{Carmo}]
Let $r:[0, +\infty)\to\mathbb{R}^3$ be a smooth curve. The functions
\begin{align*}
\kappa(t)=\frac{\left\|\dot{r}(t)\times\ddot{r}(t)\right\|}{\left\|\dot{r}(t)\right\|^3}, \quad
\tau(t)=\frac{\left(\dot{r}(t),\ddot{r}(t),\dddot{r}(t)\right)}{\left\|\dot{r}(t)\times\ddot{r}(t)\right\|^2}
\end{align*}
are called the curvature and torsion of curve $r(t)$, respectively.
\end{defn}
\par
Gluck \cite{Gluck} gave the definition of higher curvatures of curves in $\mathbb{R}^n$,
which is a generalization of curvature and torsion.
Here we briefly review the results of \cite{Gluck}.

Let $r:[0, +\infty)\to\mathbb{R}^n$ be a smooth curve, and $\dot r(t)\neq0$ for all $t\in[0, +\infty)$.
Suppose that for each $t\in[0, +\infty)$, the vectors
\begin{align}\label{align vectors}
\dot r(t), \ddot r(t), \cdots, r^{(m)}(t)
\end{align}
are linearly independent. Applying the Gram-Schmidt orthonormalization process to (\ref{align vectors}),
we obtain the orthogonal vectors
\begin{align*}
E_1(t), E_2(t), \cdots, E_m(t),
\end{align*}
and an orthonormal set whose elements are
\begin{align*}
H_1(t), H_2(t), \cdots, H_m(t),
\end{align*}
where
\begin{align}\label{align Gram-Schmidt}
& E_1(t)=\dot r(t),
\\& E_2(t)=\ddot r(t)-\frac{ \left\langle \ddot r(t), E_{1}(t) \right\rangle }{ \left\langle E_{1}(t), E_{1}(t) \right\rangle } E_{1}(t),
\nonumber\\& \cdots\cdots
\nonumber\\& E_m(t)=r^{(m)}(t)-\sum_{i=1}^{m-1}\frac{ \left\langle r^{(m)}(t), E_{i}(t) \right\rangle }{ \left\langle E_{i}(t), E_{i}(t) \right\rangle } E_{i}(t).
\nonumber
\end{align}
and
\begin{align*}
H_i(t)=\frac{ E_i(t) }{ \left\| E_i(t) \right\| } \quad
(i=1,2,\cdots,m).
\end{align*}
Furthermore, we have
\begin{align*}
E_{m+1}(t)=r^{(m+1)}(t)-\sum_{i=1}^{m}\frac{ \left\langle r^{(m+1)}(t), E_{i}(t) \right\rangle }{ \left\langle E_{i}(t), E_{i}(t) \right\rangle } E_{i}(t).
\end{align*}
Nevertheless, if $E_{m+1}(t)=0$, then we cannot form $H_{m+1}(t)$.

\begin{defn}[\!\!\cite{Gluck}]
Let $r(s)$ be a smooth curve in $\mathbb{R}^n$, where $s$ is the arc length parameter, namely, $\left\| \dot r(s) \right\|\equiv 1$ for all $s\in[0, +\infty)$.
Suppose that
$\dot r(s), \ddot r(s), \cdots, r^{(m)}(s)$
are linearly independent,
then we have
\begin{align*}
\begin{pmatrix}
\dot H_1(s)\\[0.7em] \dot H_2(s)\\[0.7em] \vdots\\[0.7em] \dot H_{m-1}(s)
\end{pmatrix}
=
\begin{pmatrix}
  0&\kappa_1(s)&&&\\[1em]
  -\kappa_1(s)&0&\kappa_2(s)&&\\[0.6em]
  &\kappa_2(s)&0&\ddots&\\[0.6em]
  &&\ddots&\ddots&\kappa_{m-2}(s)\\[1em]
  &&&-\kappa_{m-2}(s)&0&\kappa_{m-1}(s)
\end{pmatrix}
\begin{pmatrix}
H_1(s)\\[0.7em] H_2(s)\\[0.7em] \vdots\\[0.7em] H_m(s)
\end{pmatrix},
\end{align*}
where $\kappa_1(s), \kappa_2(s), \cdots, \kappa_{m-1}(s)$ are called the first, second, $\cdots$, $(m-1)$-th curvature of the curve $r(s)$, respectively.
\par
Moreover, for $s_0\in[0, +\infty)$, if $r^{(m+1)}(s_0)$ is linearly independent of
$\dot r(s_0), \ddot r(s_0), \cdots, r^{(m)}(s_0)$,
then this will also be true in some neighborhood of $s_0$ in $[0, +\infty)$.
For $s$ in such a neighborhood, $H_{m+1}(s)$ can be defined, and we have
$\dot H_m(s)=-\kappa_{m-1}(s)H_{m-1}(s)+\kappa_m(s)H_{m+1}(s)$,
where $\kappa_m(s)$ is the $m$th curvature of the curve $r(s)$.
Conversely, if $r^{(m+1)}(s_0)$ can be represented as a linear combination of
$\dot r(s_0)$, $\ddot r(s_0)$, $\cdots$, $r^{(m)}(s_0),$
then $\kappa_m(s_0)=0$.
\end{defn}

\begin{rem}
$\kappa_i(s)>0$ for $i=1,2,\cdots,m-1$, and $\kappa_m(s)\geqslant0$.
\end{rem}

\begin{rem}
Let $r(s)$ be a smooth curve in $\mathbb{R}^3$, where $s$ is the arc length parameter. Suppose that
$
\dot r(s), \ddot r(s), \dddot r(s)
$
are linearly independent. Then we have Frenet-Serret formulas (cf. \cite{Carmo}), and $\kappa_1(s)=\kappa(s),~ \kappa_2(s)=|\tau(s)|.$
\end{rem}

\begin{rem}
Gluck \cite{Gluck} gave the formula of each curvature of curve $r(t)$ in $\mathbb{R}^n$.
In fact, suppose that $t$ is the parameter of curve $r(t)$, and $\dot r(t)\neq0$ for all $t\in[0, +\infty)$. Let
\begin{align*}
V_i(t)=\prod_{p=1}^i \left\| E_p(t) \right\|,
\end{align*}
namely, $V_i(t)$ denotes the $i$-dimensional volume of $i$-dimensional parallelotope with
vectors
$\dot{r}(t)$, $\ddot{r}(t)$, $\cdots$, $r^{(i)}(t)$
as edges, and we have a convention that $V_0(t)=1$.
Then we have the following result.
\end{rem}

\begin{prop}[\!\!\cite{Gluck}]\label{prop curvature}
The $i$th curvature of curve $r(t)$ is
\begin{align*}
\kappa_i(t)
=\frac{\left\| E_{i+1}(t) \right\|}{ \left\| E_{1}(t) \right\| \left\| E_{i}(t) \right\| }
=\frac{ V_{i+1}(t) V_{i-1}(t) }{ V_{1}(t) V_{i}^2(t) } \quad
(i=1,2,\cdots,m).
\end{align*}
\end{prop}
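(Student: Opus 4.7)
The plan is to reduce everything to the arc-length Frenet identity that is already built into the definition. Parametrizing by arc length $s$ with $\tfrac{ds}{dt}=\|\dot r(t)\|=\|E_1(t)\|$, the chain rule turns $\dot H_i(s)=-\kappa_{i-1}(s)H_{i-1}(s)+\kappa_i(s)H_{i+1}(s)$ into
\[
\frac{d}{dt}H_i(t)=\|E_1(t)\|\bigl(-\kappa_{i-1}(t)H_{i-1}(t)+\kappa_i(t)H_{i+1}(t)\bigr),
\]
and pairing with $H_{i+1}(t)$, which is orthonormal to every other $H_j(t)$, isolates
\[
\kappa_i(t)=\frac{1}{\|E_1(t)\|}\bigl\langle \tfrac{d}{dt}H_i(t),\,H_{i+1}(t)\bigr\rangle.
\]
So the whole proposition reduces to computing this single inner product in terms of the $E_j$'s.

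Substituting $H_i=E_i/\|E_i\|$, the contribution of $\tfrac{d}{dt}(\|E_i\|^{-1})$ is a scalar multiple of $H_i$, which vanishes against $H_{i+1}$, leaving
\[
\bigl\langle \tfrac{d}{dt}H_i(t),\,H_{i+1}(t)\bigr\rangle=\frac{1}{\|E_i(t)\|}\bigl\langle \dot E_i(t),\,H_{i+1}(t)\bigr\rangle.
\]
The core step is then to prove $\dot E_i(t)\equiv E_{i+1}(t)\pmod{\operatorname{span}\{E_1(t),\ldots,E_i(t)\}}$. I would establish this by induction on $i$: differentiating the Gram--Schmidt expression $E_i=r^{(i)}-\sum_{j<i}c_{i,j}(t)E_j$ gives $\dot E_i=r^{(i+1)}-\sum_{j<i}\bigl(\dot c_{i,j}E_j+c_{i,j}\dot E_j\bigr)$, and the induction hypothesis places each $\dot E_j$ with $j\le i-1$ inside $\operatorname{span}\{E_1,\ldots,E_{j+1}\}\subseteq\operatorname{span}\{E_1,\ldots,E_i\}$. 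Using the Gram--Schmidt formula $r^{(i+1)}=E_{i+1}+\sum_{j\le i}\tfrac{\langle r^{(i+1)},E_j\rangle}{\langle E_j,E_j\rangle}E_j$ to rewrite the leading term then finishes the induction. Since $H_{i+1}\perp\operatorname{span}\{E_1,\ldots,E_i\}$, we conclude $\langle \dot E_i,H_{i+1}\rangle=\langle E_{i+1},H_{i+1}\rangle=\|E_{i+1}\|$, which chains back through the previous two displays to give the first equality
\[
\kappa_i(t)=\frac{\|E_{i+1}(t)\|}{\|E_1(t)\|\,\|E_i(t)\|}.
\]

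The second equality is then purely algebraic. The definition $V_i=\prod_{p=1}^i\|E_p\|$ together with the convention $V_0=1$ gives $\|E_i\|=V_i/V_{i-1}$, so
\[
\frac{\|E_{i+1}\|}{\|E_1\|\,\|E_i\|}=\frac{V_{i+1}/V_i}{V_1\cdot V_i/V_{i-1}}=\frac{V_{i+1}V_{i-1}}{V_1 V_i^2},
\]
which is what was claimed.

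The main obstacle is the inductive identification $\dot E_i\equiv E_{i+1}$ modulo $\operatorname{span}\{E_1,\ldots,E_i\}$; once that is in hand, everything else is the chain rule, orthogonality of the Frenet frame, and bookkeeping in the Gram--Schmidt recursion. A minor companion check is that $V_i(t)$ really is the $i$-dimensional volume of the parallelotope on $\dot r(t),\ldots,r^{(i)}(t)$, which follows from the standard fact that the Gram determinant $\det(\langle r^{(p)},r^{(q)}\rangle)_{p,q\le i}$ equals $\prod_{p=1}^i\|E_p\|^2$ because Gram--Schmidt provides a unipotent upper-triangular change of basis.
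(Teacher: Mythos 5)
Your argument is essentially correct, and it supplies a proof where the paper gives none: Proposition \ref{prop curvature} is simply quoted from Gluck's paper, so there is no in-text proof to compare against. Your chain of reductions --- Frenet equation plus chain rule to get $\kappa_i=\|E_1\|^{-1}\langle \dot H_i,H_{i+1}\rangle$, then killing the $H_i$-component to get $\|E_i\|^{-1}\langle\dot E_i,H_{i+1}\rangle$, then the inductive identity $\dot E_i\in E_{i+1}+\operatorname{span}\{E_1,\dots,E_i\}$ with leading coefficient exactly $1$ --- is sound, and the final volume-ratio identity follows immediately from $\|E_i\|=V_i/V_{i-1}$ and $V_0=1$. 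Two small points should be made explicit. First, the definition of $\kappa_i$ in the paper is stated for the arc-length parametrization, with $H_i(s)$ built from $s$-derivatives, whereas your $H_i(t)$ and $E_i(t)$ are built from $t$-derivatives; your opening chain-rule step silently identifies the two frames. This identification is true because the $t$-derivatives and $s$-derivatives of $r$ are related by an upper-triangular change of basis with positive diagonal entries $(ds/dt)^k$, so Gram--Schmidt produces the same orthonormal frame, but it deserves a sentence. Second, for $i=m$ the Frenet relation $\dot H_m=-\kappa_{m-1}H_{m-1}+\kappa_m H_{m+1}$ is only available when $E_{m+1}\neq 0$; in the degenerate case $E_{m+1}=0$ the definition sets $\kappa_m=0$ and the claimed formula holds trivially, so the case split should be acknowledged to cover the full range $i=1,\dots,m$ in the statement.
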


We can give $V_i(t)$ by the derivatives of $r(t)$ with respect to $t$, and thus obtain the expression of $\kappa_i(t)$.
In fact, write $r^{(i)}(t)=\left( r_1^{(i)}(t), r_2^{(i)}(t), \cdots, r_n^{(i)}(t) \right)^{\mathrm{T}}$,
then by Cauchy-Binet formula, we have
\begin{align}
& V_1^2(t)=\left\| \dot r(t) \right\|^2=\sum_{i=1}^n \dot r_i^2(t),
\nonumber\\
& V_2^2(t)=
\sum_{1\leqslant i<j\leqslant n}
\begin{vmatrix}
\dot{r}_{i}(t) & \ddot{r}_{i}(t) \\[0.7em]
\dot{r}_{j}(t) & \ddot{r}_{j}(t)
\end{vmatrix}^2,
\label{align volume 2}\\
& \cdots\cdots
\nonumber\\
& V_k^2(t)=
\sum_{1\leqslant i_1<i_2<\cdots <i_k\leqslant n}
\begin{vmatrix}
\dot{r}_{i_1}(t) & \ddot{r}_{i_1}(t) & \cdots & r_{i_1}^{(k)}(t) \\[0.7em]
\dot{r}_{i_2}(t) & \ddot{r}_{i_2}(t) & \cdots & r_{i_2}^{(k)}(t) \\[0.7em]
\vdots & \vdots & \ddots & \vdots \\[0.7em]
\dot{r}_{i_k}(t) & \ddot{r}_{i_k}(t) & \cdots & r_{i_k}^{(k)}(t)
\end{vmatrix}^2,
\nonumber\\
& \cdots\cdots
\nonumber
\end{align}
Hence we obtain the expression of each curvature of curve $r(t)$ in $\mathbb{R}^n$ by the coordinates of derivatives of $r(t)$.
In particular, the first curvature of $r(t)$ satisfies
\begin{align}\label{align first curvature}
\kappa_1(t)
=\frac{\left\| E_{2}(t) \right\|} { \left\| E_{1}(t) \right\|^2 }
=\frac{ V_{2}(t) }{ V_{1}^3(t) }
=\sqrt{ \frac{
\sum_{1\leqslant i<j\leqslant n}
\begin{vmatrix}
\dot{r}_{i}(t) & \ddot{r}_{i}(t) \\[0.7em]
\dot{r}_{j}(t) & \ddot{r}_{j}(t)
\end{vmatrix}^2
}
{ \left( \sum_{k=1}^n \dot r_k^2(t) \right)^3 }
}.
\end{align}
We denote $\kappa(t)$ instead of $\kappa_1(t)$, the first curvature of $r(t)$, for simplicity.

\subsection{Singular Value Decomposition}

\begin{defn}[\!\!\cite{Horn}]
Let $A$ be an $m\times n$ complex matrix with rank $r$, and $\lambda_1, \lambda_2, \cdots, \lambda_r$ the non-zero eigenvalues of $AA^\mathrm{H}$, where $A^\mathrm{H}$ denotes the conjugate transpose of $A$. Then
\begin{align*}
\delta_i=\sqrt{\lambda_i}\quad (i=1,2,\cdots,r)
\end{align*}
are called the singular values of $A$.
\end{defn}

\begin{prop}[\!\!\cite{Horn}]\label{SVD}
Let $A$ be an $m\times n$ real matrix with rank $r$, and $\delta_1\geqslant\delta_2\geqslant\cdots\geqslant\delta_r$ the singular values of $A$. Then there exists an $m\times m$ orthogonal matrix $U$ and an $n\times n$ orthogonal matrix $V$, such that
\begin{align*}
A=UDV^\mathrm{T}=U\begin{pmatrix}\Delta&0\\0&0\end{pmatrix}V^\mathrm{T},
\end{align*}
where $\Delta=\mathrm{diag} \{\delta_1, \delta_2, \cdots, \delta_r \}$.
\end{prop}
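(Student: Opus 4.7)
The plan is to reduce the existence of the SVD to the spectral theorem for real symmetric matrices applied to $A^{\mathrm{T}}A$. I would first observe that, since $A$ is a real $m\times n$ matrix of rank $r$, the Gram matrix $A^{\mathrm{T}}A$ is a real symmetric positive semidefinite $n\times n$ matrix with $\mathrm{rank}(A^{\mathrm{T}}A)=\mathrm{rank}(A)=r$, and its nonzero eigenvalues coincide with those of $AA^{\mathrm{T}}$ (so they are exactly the $\lambda_1,\dots,\lambda_r$ appearing in the definition of the singular values). By the spectral theorem, there is an orthonormal basis $v_1,\dots,v_n$ of $\mathbb{R}^n$ with $A^{\mathrm{T}}Av_i=\lambda_i v_i$, which can be arranged so that $\lambda_1\geqslant\cdots\geqslant\lambda_r>0=\lambda_{r+1}=\cdots=\lambda_n$. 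I would then set $V=(v_1,\dots,v_n)$, which is orthogonal by construction.

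Next I would manufacture the left factor from these eigenvectors. For $1\leqslant i\leqslant r$ define $u_i=\delta_i^{-1}Av_i$, where $\delta_i=\sqrt{\lambda_i}$. A direct computation using $A^{\mathrm{T}}Av_j=\lambda_j v_j$ and $v_i^{\mathrm{T}}v_j=0$ for $i\neq j$ yields $\langle u_i,u_j\rangle=\delta_i^{-1}\delta_j^{-1}v_i^{\mathrm{T}}A^{\mathrm{T}}Av_j=\delta_j\delta_i^{-1}v_i^{\mathrm{T}}v_j$, which equals $1$ when $i=j$ and $0$ otherwise; hence $\{u_1,\dots,u_r\}$ is an orthonormal set in $\mathbb{R}^m$. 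Since $r\leqslant m$, I extend this set to an orthonormal basis $u_1,\dots,u_m$ of $\mathbb{R}^m$ by completing to a basis and applying Gram--Schmidt to the additional vectors, and put $U=(u_1,\dots,u_m)$, which is orthogonal.

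Finally I would verify the claimed identity column by column. For $1\leqslant i\leqslant r$ the definition of $u_i$ gives $Av_i=\delta_i u_i$. For $i>r$ one has $\|Av_i\|^2=v_i^{\mathrm{T}}A^{\mathrm{T}}Av_i=\lambda_i=0$, so $Av_i=0$. Therefore the $n$ columns of $AV$ are $\delta_1 u_1,\dots,\delta_r u_r,0,\dots,0$, which is precisely $UD$ with $D=\begin{pmatrix}\Delta & 0\\ 0 & 0\end{pmatrix}$ and $\Delta=\mathrm{diag}\{\delta_1,\dots,\delta_r\}$. Multiplying on the right by $V^{\mathrm{T}}$ and using $VV^{\mathrm{T}}=I$ produces $A=UDV^{\mathrm{T}}$, as required.

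The main obstacle is a prerequisite rather than a step internal to the argument: one needs the spectral theorem for real symmetric matrices as a black box (to ensure a real orthonormal eigenbasis and real, nonnegative eigenvalues of $A^{\mathrm{T}}A$). Once that is granted, the remaining issues are routine but must be handled consistently: checking that $v_i\in\ker A$ exactly when $\lambda_i=0$ (so that the rank hypothesis forces the block structure of $D$ with exactly $r$ nonzero entries), justifying that the nonzero spectra of $A^{\mathrm{T}}A$ and $AA^{\mathrm{T}}$ agree (so the $\delta_i$ constructed here really are the singular values of Definition), and invoking Gram--Schmidt to complete the $u_i$ to a basis of $\mathbb{R}^m$.
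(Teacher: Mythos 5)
Your proof is correct and complete; the paper itself gives no proof of this proposition, since it is quoted as a background result from Horn and Johnson's \emph{Matrix Analysis}, and your argument via the spectral theorem for $A^{\mathrm{T}}A$ (building $u_i=\delta_i^{-1}Av_i$, checking orthonormality, extending to a basis of $\mathbb{R}^m$, and verifying $AV=UD$ column by column) is exactly the standard derivation found there. The one point worth being explicit about, which you do flag, is that the paper's definition of singular values uses the nonzero eigenvalues of $AA^{\mathrm{H}}$, so the identification of the nonzero spectra of $A^{\mathrm{T}}A$ and $AA^{\mathrm{T}}$ is needed to match your $\delta_i$ to that definition.
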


\subsection{Real Jordan Canonical Form}

\begin{prop}[\!\!\cite{Horn}]\label{prop Jordan1}
Let $A$ be an $n\times n$ real matrix. Then $A$ is similar to a block diagonal real matrix
\begin{align*}
\begin{pmatrix}
 \begin{matrix}
  C_{n_1}(a_1,b_1)&&\\[0.7ex]
  &C_{n_2}(a_2,b_2)&\\
  &&\ddots
 \end{matrix}
 &&\text{\LARGE$0$}\\
 &C_{n_p}(a_p,b_p)&\\[0.7ex]
 \text{\LARGE$0$}&&
 \begin{matrix}
  J_{n_{p+1}}(\lambda_{p+1})&&\\
  &\ddots&\\
  &&J_{n_{r}}(\lambda_r)
 \end{matrix}
\end{pmatrix},
\end{align*}
where
\par
$(1)$ for $k\in\{1,2,\cdots,p\}$, $\lambda_k=a_k+\sqrt{-1} b_k$ and $\bar\lambda_k=a_k-\sqrt{-1} b_k$
\ $(a_k,b_k\in\mathbb{R},\text{and}\ b_k>0)$ are eigenvalues, and
\begin{align*}
C_{n_k}(a_k,b_k)=
\begin{pmatrix}
\Lambda_k & I_2&&&\\[0.6em]
&\Lambda_k & I_2&&\\
&&\Lambda_k &\ddots&\\
&&&\ddots & I_2\\[0.5em]
&&&&\Lambda_k
\end{pmatrix}_{2n_k\times 2n_k},
\end{align*}
where
$\Lambda_k=
\begin{pmatrix}
 a_k&b_k\\
-b_k&a_k
\end{pmatrix},
I_2=
\begin{pmatrix}
1&0\\
0&1
\end{pmatrix};$
\par
$(2)$ for $j\in\{p+1,p+2,\cdots,r\}$, $\lambda_j\in\mathbb{R}$ is a real eigenvalue, and
\begin{align*}
J_{n_j}(\lambda_j)=
\begin{pmatrix}
\lambda_j &1&&&\\[0.6em]
&\lambda_j &1&&\\
&&\lambda_j &\ddots&\\
&&&\ddots &1\\[0.5em]
&&&&\lambda_j
\end{pmatrix}_{n_j\times n_j}.
\end{align*}
\end{prop}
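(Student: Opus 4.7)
The plan is to derive the real Jordan canonical form by starting from the complex Jordan canonical form of $A$ (which I take as known) and then exploiting the conjugation symmetry that arises because $A$ is real. Over $\mathbb{C}$, there exists an invertible complex matrix $P$ such that $P^{-1}AP$ is block-diagonal with elementary Jordan blocks $J_{m}(\mu)$ for the eigenvalues $\mu\in\sigma(A)$. Because $\bar A=A$, complex eigenvalues occur in conjugate pairs $\lambda=a+\sqrt{-1}b$ and $\bar\lambda=a-\sqrt{-1}b$ (with $b>0$) whose Jordan structures match block-for-block: if $\{v_1,\dots,v_m\}$ is a Jordan chain for $\lambda$, i.e.\ $(A-\lambda I)v_j=v_{j-1}$ with $v_0=0$, then $\{\bar v_1,\dots,\bar v_m\}$ is a Jordan chain for $\bar\lambda$. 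Real eigenvalues, on the other hand, admit real Jordan chains and contribute blocks $J_{n_j}(\lambda_j)$ that are already real.

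The main construction is to replace each conjugate pair of complex Jordan blocks $J_{n_k}(\lambda_k)\oplus J_{n_k}(\bar\lambda_k)$ by a single real block $C_{n_k}(a_k,b_k)$ acting on the $2n_k$-dimensional real subspace spanned by the real and imaginary parts of a chosen chain. Explicitly, I would take as an ordered real basis
\[
\operatorname{Re} v_1,\ -\operatorname{Im} v_1,\ \operatorname{Re} v_2,\ -\operatorname{Im} v_2,\ \dots,\ \operatorname{Re} v_{n_k},\ -\operatorname{Im} v_{n_k},
\]
and then verify that $A(\operatorname{Re} v_j)=a_k\operatorname{Re} v_j-b_k(-\operatorname{Im} v_j)+\operatorname{Re} v_{j-1}$ and $A(-\operatorname{Im} v_j)=b_k\operatorname{Re} v_j+a_k(-\operatorname{Im} v_j)+(-\operatorname{Im} v_{j-1})$. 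Reading these relations in pairs produces precisely the diagonal $2\times2$ blocks $\Lambda_k$ together with superdiagonal $I_2$ blocks, i.e.\ the matrix $C_{n_k}(a_k,b_k)$ as claimed. Linear independence over $\mathbb{R}$ of these $2n_k$ vectors follows from the linear independence over $\mathbb{C}$ of $\{v_j,\bar v_j\}$ together with $b_k>0$.

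Assembling the pieces, I would concatenate the real Jordan chains obtained from the real eigenvalues (contributing the $J_{n_j}(\lambda_j)$ blocks unchanged) with the real bases constructed above from each conjugate pair (contributing one $C_{n_k}(a_k,b_k)$ block per pair). The columns of these combined bases form the desired real invertible matrix $Q$, and by construction $Q^{-1}AQ$ has exactly the stated block-diagonal form. The hard part is purely bookkeeping: ordering the real and imaginary parts correctly so that the off-diagonal structure yields $I_2$ (not a permutation of it) and checking carefully that the top chain vector picks up no superdiagonal contribution, so that each $C_{n_k}(a_k,b_k)$ has $\Lambda_k$ on the diagonal and $I_2$ immediately above, rather than some spurious extra entries.
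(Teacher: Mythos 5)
The paper offers no proof of this proposition: it is quoted verbatim from Horn and Johnson \cite{Horn} as a known result, so there is nothing internal to compare your argument against. Judged on its own, your derivation is the standard one (pass to the complex Jordan form, use reality of $A$ to pair conjugate Jordan blocks via $\overline{(A-\lambda I)v_j}=(A-\bar\lambda I)\bar v_j$, and realify each pair by taking real and imaginary parts of one chain), and the overall structure is sound, including the linear-independence argument and the observation that real eigenvalues admit real chains.

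One concrete point needs fixing: the displayed verification identities are inconsistent with your stated convention. If $v_j$ is a chain for $\lambda_k=a_k+\sqrt{-1}\,b_k$, i.e. $Av_j=\lambda_k v_j+v_{j-1}$, then separating real and imaginary parts gives $A(\mathrm{Re}\,v_j)=a_k\,\mathrm{Re}\,v_j-b_k\,\mathrm{Im}\,v_j+\mathrm{Re}\,v_{j-1}$ and $A(\mathrm{Im}\,v_j)=b_k\,\mathrm{Re}\,v_j+a_k\,\mathrm{Im}\,v_j+\mathrm{Im}\,v_{j-1}$, so the ordered basis $(\mathrm{Re}\,v_1,\mathrm{Im}\,v_1,\dots)$ --- with no sign flip --- already produces the diagonal blocks $\Lambda_k=\bigl(\begin{smallmatrix}a_k&b_k\\-b_k&a_k\end{smallmatrix}\bigr)$ and superdiagonal $I_2$. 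With your basis $(\mathrm{Re}\,v_j,\,-\mathrm{Im}\,v_j)$ and a chain for $\lambda_k$, the relations you wrote have the wrong sign on the $b_k$ terms, and the correct relations yield $\Lambda_k^{\mathrm{T}}$ rather than $\Lambda_k$; your identities as written are instead the ones obtained from a chain for $\bar\lambda_k$. This is exactly the bookkeeping you flagged as remaining to be checked, and it is repairable by either choice of convention, but as stated the computation does not literally produce the matrix in the proposition.
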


\section{Relationship Between the Curvatures of Two Equivalent Systems}\label{Section Relationship}

In this section, we establish the relationship between curvatures of trajectories of two equivalent systems.

Let curve $r(t)$ be the trajectory of system (\ref{system}),
and curve $v(t)$ the trajectory of system $\dot{v}(t)=Bv(t)$, $v(0)=v_0$,
where $A=P^{-1}BP$, and $v_0=Pr_0$. Suppose that for each $t$, the vectors
\begin{align*}
\dot{r}(t), \ddot{r}(t), \cdots, r^{(m)}(t)
\end{align*}
are linearly independent. Since $v^{(i)}(t)=Pr^{(i)}(t) \,
(i=1,2,\cdots,m)$, we see that the vectors
\begin{align*}
\dot{v}(t), \ddot{v}(t), \cdots, v^{(m)}(t)
\end{align*}
are also linearly independent.
Hence, we can define curvatures $\kappa_{r,1}(t), \kappa_{r,2}(t), \cdots, \kappa_{r,m}(t)$ of curve $r(t)$,
and curvatures $\kappa_{v,1}(t), \kappa_{v,2}(t), \cdots, \kappa_{v,m}(t)$ of curve $v(t)$, respectively.

Now, we prove the following theorem.
\begin{thm}\label{thm relation}
Suppose that linear time-invariant system $\dot{r}(t)=Ar(t)$ is equivalent to system $\dot{v}(t)=Bv(t)$, where
 $A=P^{-1}BP$, and $v(t)=Pr(t)$ is the equivalence transformation.
 Let $\kappa_{r,i}(t)$ and $\kappa_{v,i}(t)$ be the $i$th $(i=1,2,\cdots,m-1)$ curvatures of trajectories $r(t)$ and $v(t)$, respectively. Then we have
\begin{align*}
\lim\limits_{t\to+\infty}\kappa_{v,i}(t)=0 &\iff \lim\limits_{t\to+\infty}\kappa_{r,i}(t)=0, \\
\lim\limits_{t\to+\infty}\kappa_{v,i}(t)=+\infty &\iff \lim\limits_{t\to+\infty}\kappa_{r,i}(t)=+\infty, \\
\kappa_{v,i}(t) ~\text{is a bounded function} &\iff \kappa_{r,i}(t) ~\text{is a bounded function}.
\end{align*}
\end{thm}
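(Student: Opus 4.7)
The plan is to reduce the claim to Proposition \ref{prop curvature}, which expresses $\kappa_i(t)$ purely as a ratio of the $j$-dimensional volumes $V_j(t)=\prod_{p=1}^{j}\|E_p(t)\|$ of the parallelotopes spanned by the first $j$ derivatives of the trajectory. Because $v^{(j)}(t)=Pr^{(j)}(t)$ for every $j$, the $n\times j$ derivative matrix for $v(t)$ is simply $P$ times the corresponding matrix for $r(t)$, so the whole theorem reduces to the following linear-algebraic question: by how much can an invertible linear map $P$ distort a $j$-dimensional volume in $\mathbb{R}^n$?

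The natural tool is Proposition \ref{SVD}. I would write the singular value decomposition $P=U\Sigma W^{\mathrm T}$ with singular values $\delta_1\ge\cdots\ge\delta_n>0$ (positive because $P$ is invertible), and apply the Cauchy--Binet identity (the same identity used in (\ref{align volume 2})) to the matrix $\Sigma W^{\mathrm T}M_j^{(r)}(t)$, where $M_j^{(r)}(t)$ has columns $\dot r(t),\ldots,r^{(j)}(t)$. Orthogonality of $U$ and $W$ together with the fact that every product of $j$ diagonal entries of $\Sigma$ lies between $\delta_n^{j}$ and $\delta_1^{j}$ yields the volume comparison $\delta_n^{j}V_j^{(r)}(t)\le V_j^{(v)}(t)\le \delta_1^{j}V_j^{(r)}(t)$ for every $j$ and every $t$. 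Substituting this into the curvature formula $\kappa_i(t)=V_{i+1}(t)V_{i-1}(t)/(V_1(t)V_i^2(t))$ and tracking the exponents of $\delta_1$ and $\delta_n$ in the numerator and denominator produces positive constants $c_1,c_2$, depending only on $P$ and $i$, for which the sandwich $c_1\kappa_{r,i}(t)\le \kappa_{v,i}(t)\le c_2\kappa_{r,i}(t)$ holds. Each of the three equivalences in the statement then follows immediately from the squeeze theorem and the definition of boundedness; no stability theory enters at this stage, and the argument is symmetric in $r$ and $v$ since $P^{-1}$ is also invertible.

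The step I expect to be the most delicate is the volume comparison itself. Intuitively it is clear, since each principal direction of $P$ stretches lengths by a factor between $\delta_n$ and $\delta_1$, but the clean derivation requires careful bookkeeping: one must apply Cauchy--Binet to the right product of matrices, factor the diagonal entries of $\Sigma$ out of each $j\times j$ minor, and verify that the resulting bound is uniform across all $\binom{n}{j}$ index subsets. Everything afterwards is a one-line substitution into Proposition \ref{prop curvature}, so this volume estimate is really the entire content of Theorem \ref{thm relation}.
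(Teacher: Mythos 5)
Your proposal follows essentially the same route as the paper: a singular value decomposition of $P$, the observation that the orthogonal factors leave all the volumes $V_j(t)$ (hence the curvatures) unchanged, a Cauchy--Binet computation showing $\delta_n^{j}V_{j}^{(r)}(t)\leqslant V_{j}^{(v)}(t)\leqslant \delta_1^{j}V_{j}^{(r)}(t)$, and substitution into Proposition \ref{prop curvature} to obtain the two-sided bound $c_1\kappa_{r,i}(t)\leqslant\kappa_{v,i}(t)\leqslant c_2\kappa_{r,i}(t)$ from which all three equivalences follow. The paper merely makes the intermediate trajectories $a(t)=V^{\mathrm T}r(t)$ and $b(t)=\Delta a(t)$ explicit; the content is identical, and your identification of the volume comparison as the crux is accurate.
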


\begin{proof}
First, recall that $A=P^{-1}BP$, where $P$ is an $n\times n$ real invertible matrix. Using Proposition \ref{SVD}, we obtain a singular value decomposition of $P$, namely,
\begin{align*}
P=U\Delta V^\mathrm{T},
\end{align*}
where $U$ and $V$ are two $n\times n$ orthogonal matrices,
and $\Delta=\mathrm{diag}\{\delta_1, \delta_2, \cdots, \delta_n\} ~( \delta_1\geqslant\delta_2\geqslant\cdots\geqslant\delta_n>0 )$.
Let
\begin{align*}
a(t)=V^\mathrm{T}r(t), \quad
b(t)=\Delta a(t)=\Delta V^\mathrm{T} r(t)=U^{-1}v(t).
\end{align*}
Then we obtain two new linear time-invariant systems
\begin{align*}
\dot a(t)= Na(t), \quad
\dot b(t)= \tilde N b(t),
\end{align*}
where $N=V^\mathrm{T} A (V^\mathrm{T})^{-1}$, and $\tilde N=U^\mathrm{-1} B U$.
\par
Since $U,V$ are $n\times n$ orthogonal matrices, we obtain
\begin{align}\label{align eq. curvature}
\kappa_{r,i}(t)=\kappa_{a,i}(t),\quad
\kappa_{v,i}(t)=\kappa_{b,i}(t)\quad
(i=1,2,\cdots,m-1).
\end{align}
In fact, because $a(t)=V^\mathrm{T}r(t)$, we have $a^{(k)}(t)=V^\mathrm{T}r^{(k)}(t)\,(k=1,2,\cdots,m)$.
Let $E_{a,i}(t)$ and $E_{r,i}(t)$ denote the $i$th vector obtained by Gram-Schmidt orthogonalization (\ref{align Gram-Schmidt})
for vectors $\dot{a}(t), \ddot{a}(t), \cdots, a^{(m)}(t)$
and vectors $\dot{r}(t), \ddot{r}(t), \cdots, r^{(m)}(t)$, respectively.
Then we have $E_{a,i}(t)=V^\mathrm{T} E_{r,i}(t)\, (i=1,2,\cdots,m)$.
Using Proposition \ref{prop curvature}, the $i$th curvature of curve $a(t)$ satisfies
\begin{align*}
\kappa_{a,i}(t)
&=\frac{\left\| E_{a,i+1}(t) \right\|}{ \left\| E_{a,1}(t) \right\| \left\| E_{a,i}(t) \right\| }
=\frac{\left\| V^\mathrm{T} E_{r,i+1}(t) \right\|}{ \left\| V^\mathrm{T} E_{r,1}(t) \right\| \left\| V^\mathrm{T} E_{r,i}(t) \right\| }
\\&=\frac{\left\| E_{r,i+1}(t) \right\|}{ \left\| E_{r,1}(t) \right\| \left\| E_{r,i}(t) \right\| }
=\kappa_{r,i}(t)
\quad (i=1,2,\cdots,m-1).
\end{align*}
Similarly, we have $\kappa_{b,i}(t)=\kappa_{v,i}(t)\,(i=1,2,\cdots,m-1)$.
\par
The task is now to find the relationship between curvatures $\kappa_{a,i}(t)$ and $\kappa_{b,i}(t)$.
\par
Noting that $b(t)=\Delta a(t)$, we have $b_i(t)=\delta_i a_i(t)$,
and derivatives $b_i^{(k)}(t)=\delta_i a_i^{(k)}(t)\, (k=1,2,\cdots,m)$.
Thus, the square of the $k$-dimensional volume of $k$-dimensional parallelotope with
vectors $\dot{b}(t), \ddot{b}(t), \cdots, b^{(k)}(t)$ as edges is
\begin{align*}
V_{b,k}^2(t)
&=\sum_{1\leqslant i_1<i_2<\cdots <i_k\leqslant n}
\begin{vmatrix}
\dot{b}_{i_1}(t) & \ddot{b}_{i_1}(t) & \cdots & b_{i_1}^{(k)}(t) \\[0.7em]
\dot{b}_{i_2}(t) & \ddot{b}_{i_2}(t) & \cdots & b_{i_2}^{(k)}(t) \\[0.7em]
\vdots & \vdots & \ddots & \vdots \\[0.7em]
\dot{b}_{i_k}(t) & \ddot{b}_{i_k}(t) & \cdots & b_{i_k}^{(k)}(t)
\end{vmatrix}^2
\displaybreak[0]
\\&=\sum_{1\leqslant i_1<i_2<\cdots <i_k\leqslant n}
\begin{vmatrix}
\delta_{i_1}\dot{a}_{i_1}(t) & \delta_{i_1}\ddot{a}_{i_1}(t) & \cdots & \delta_{i_1}a_{i_1}^{(k)}(t) \\[0.7em]
\delta_{i_2}\dot{a}_{i_2}(t) & \delta_{i_2}\ddot{a}_{i_2}(t) & \cdots & \delta_{i_2}a_{i_2}^{(k)}(t) \\[0.7em]
\vdots & \vdots & \ddots & \vdots \\[0.7em]
\delta_{i_k}\dot{a}_{i_k}(t) & \delta_{i_k}\ddot{a}_{i_k}(t) & \cdots & \delta_{i_k}a_{i_k}^{(k)}(t)
\end{vmatrix}^2
\displaybreak[0]
\\&=\sum_{1\leqslant i_1<i_2<\cdots <i_k\leqslant n}
\left\{ \left( \prod_{p=1}^k \delta_{i_p} \right)
\begin{vmatrix}
\dot{a}_{i_1}(t) & \ddot{a}_{i_1}(t) & \cdots & a_{i_1}^{(k)}(t) \\[0.7em]
\dot{a}_{i_2}(t) & \ddot{a}_{i_2}(t) & \cdots & a_{i_2}^{(k)}(t) \\[0.7em]
\vdots & \vdots & \ddots & \vdots \\[0.7em]
\dot{a}_{i_k}(t) & \ddot{a}_{i_k}(t) & \cdots & a_{i_k}^{(k)}(t)
\end{vmatrix} \right\}^2
\displaybreak[0]
\\&\leqslant \delta_1^{2k} \sum_{1\leqslant i_1<i_2<\cdots <i_k\leqslant n}
\begin{vmatrix}
\dot{a}_{i_1}(t) & \ddot{a}_{i_1}(t) & \cdots & a_{i_1}^{(k)}(t) \\[0.7em]
\dot{a}_{i_2}(t) & \ddot{a}_{i_2}(t) & \cdots & a_{i_2}^{(k)}(t) \\[0.7em]
\vdots & \vdots & \ddots & \vdots \\[0.7em]
\dot{a}_{i_k}(t) & \ddot{a}_{i_k}(t) & \cdots & a_{i_k}^{(k)}(t)
\end{vmatrix}^2
=\delta_1^{2k} V_{a,k}^2(t).
\end{align*}
Similarly, we have $V_{b,k}^2(t) \geqslant \delta_n^{2k} V_{a,k}^2(t)$.
Hence
\begin{align}\label{ineq volume}
\delta_n^{k}\leqslant \frac{V_{b,k}(t)}{V_{a,k}(t)} \leqslant \delta_1^{k} \quad (k=1,2,\cdots,m).
\end{align}
Due to the convention that $V_{a,0}(t)=V_{b,0}(t)=1$, inequality (\ref{ineq volume}) also holds for $k=0$.
\par
Finally, using Proposition \ref{prop curvature} and equality (\ref{align eq. curvature}), we have
\begin{align*}
\frac{\kappa_{v,i}(t)}{\kappa_{r,i}(t)}
&=\frac{\kappa_{b,i}(t)}{\kappa_{a,i}(t)}
=\frac{V_{b,i+1}(t)}{V_{a,i+1}(t)} \frac{V_{b,i-1}(t)}{V_{a,i-1}(t)} \frac{V_{a,1}(t)}{V_{b,1}(t)} \left(\frac{V_{a,i}(t)}{V_{b,i}(t)}\right)^2
\\ &\in\left[ \frac{ \delta_n^{i+1} \delta_n^{i-1} }{ \delta_1 \delta_1^{2i} }, ~
\frac{ \delta_1^{i+1} \delta_1^{i-1} }{ \delta_n \delta_n^{2i} } \right]
=\left[ \frac{ \delta_n^{2i} }{ \delta_1^{2i+1} }, ~\frac{ \delta_1^{2i} }{ \delta_n^{2i+1} } \right],
\end{align*}
namely,
\begin{align*}
\frac{ \delta_n^{2i} }{ \delta_1^{2i+1} }\,\kappa_{r,i}(t)
\leqslant \kappa_{v,i}(t)
\leqslant \frac{ \delta_1^{2i} }{ \delta_n^{2i+1} }\,\kappa_{r,i}(t)
\quad (i=1,2,\cdots,m-1).
\end{align*}
It follows that
\begin{align*}
\lim\limits_{t\to+\infty}\kappa_{v,i}(t)=0 &\iff \lim\limits_{t\to+\infty}\kappa_{r,i}(t)=0, \\
\lim\limits_{t\to+\infty}\kappa_{v,i}(t)=+\infty &\iff \lim\limits_{t\to+\infty}\kappa_{r,i}(t)=+\infty, \\
\kappa_{v,i}(t) ~\text{is a bounded function} &\iff \kappa_{r,i}(t) ~\text{is a bounded function}.
\end{align*}
This completes the proof of Theorem \ref{thm relation}.
\end{proof}

\section{Real Jordan Blocks}\label{Section Jordan Blocks}

In order to prove Theorem \ref{thm main}, we can transform $A$ to its real Jordan canonical form by using Proposition \ref{prop Jordan1} and Theorem \ref{thm relation}, and focus on the case of real Jordan canonical form.
\par
Assume that $A$ is a matrix in real Jordan canonical form,
then $A$ is a block diagonal matrix with the following four types of real Jordan blocks:

(1) $1\times 1$ block with real eigenvalue
\begin{align*}
\begin{pmatrix}\lambda\end{pmatrix}_{1\times 1} \quad (\lambda\in\mathbb{R}),
\end{align*}

(2) $p\times p ~(p>1)$ block with real eigenvalue
\begin{align}\label{align RH}
\begin{pmatrix}
\lambda &1&&&\\[0.3em]
&\lambda &1&&\\[-0.3em]
&&\lambda &\ddots&\\[-0.3em]
&&&\ddots &1\\[0.2em]
&&&&\lambda
\end{pmatrix}_{p\times p}
\quad (\lambda\in\mathbb{R}),
\end{align}

(3) $2\times 2$ block with complex eigenvalues
\begin{align}\label{align C2}
\begin{pmatrix}a&b\\-b&a\end{pmatrix}
\quad (a, b\in\mathbb{R},\ b>0),
\end{align}

(4) $2m\times 2m ~(m>1)$ block with complex eigenvalues
\begin{align}\label{align CH}
\begin{pmatrix}
\Lambda & I_2&&&\\[0.3em]
&\Lambda & I_2&&\\[-0.3em]
&&\Lambda &\ddots&\\[-0.3em]
&&&\ddots & I_2\\[0.2em]
&&&&\Lambda
\end{pmatrix}_{2m\times 2m},
\end{align}
where
$\Lambda=
\begin{pmatrix}
 a&b\\
-b&a
\end{pmatrix},
I_2=
\begin{pmatrix}
1&0\\
0&1
\end{pmatrix}
~ (a,b\in\mathbb{R},\text{and}\ b>0).$

\begin{rem}
In the remainder of this paper, we call these four types of real Jordan blocks R1, RH, C2 and CH block for short, respectively.
\end{rem}
We examine these four types of blocks in the following subsections.

\subsection{R1 Block and Real Diagonal Matrix}\label{subsection R1}

\

The case of R1 block is trivial. To study the general case, assume that the matrix $A$ is a real diagonal matrix, namely,
\begin{align*}
A=\mathrm{diag}\{ \lambda_1, \lambda_2, \cdots, \lambda_n \}
\quad (\lambda_1, \lambda_2, \cdots, \lambda_n\in\mathbb{R}).
\end{align*}

Write $r(0)=\left(r_{10},r_{20},\cdots,r_{n0}\right)^\mathrm{T}$,
where $r_{i0}\neq0$ for $i=1,2,\cdots,n$.
Noting that
\begin{align*}
A^2=\mathrm{diag}\{\lambda_1^2, \lambda_2^2, \cdots, \lambda_n^2\}, \quad
\mathrm{e}^{tA}=
\mathrm{diag}\{\mathrm{e}^{\lambda_1 t}, \mathrm{e}^{\lambda_2 t}, \cdots, \mathrm{e}^{\lambda_n t}\},
\end{align*}
we have
\begin{align*}
& r(t)=\mathrm{e}^{tA}r(0)
=\left( \mathrm{e}^{\lambda_1 t}r_{10}, \mathrm{e}^{\lambda_2 t}r_{20}, \cdots, \mathrm{e}^{\lambda_n t}r_{n0} \right)^\mathrm{T},
\\& \dot{r}(t)=Ar(t)
=\left( \lambda_1 \mathrm{e}^{\lambda_1 t}r_{10}, \lambda_2 \mathrm{e}^{\lambda_2 t}r_{20}, \cdots, \lambda_n \mathrm{e}^{\lambda_n t}r_{n0} \right)^\mathrm{T},
\\& \ddot{r}(t)=A^2r(t)
=\left( \lambda_1^2 \mathrm{e}^{\lambda_1 t}r_{10}, \lambda_2^2 \mathrm{e}^{\lambda_2 t}r_{20}, \cdots, \lambda_n^2 \mathrm{e}^{\lambda_n t}r_{n0} \right)^\mathrm{T},
\end{align*}
and
\begin{align*}
\dot{r}_i(t)=\lambda_i \mathrm{e}^{\lambda_i t}r_{i0}, \quad
\ddot{r}_i(t)=\lambda_i^2 \mathrm{e}^{\lambda_i t}r_{i0} \quad
(i=1,2,\cdots,n).
\end{align*}

By formula (\ref{align first curvature}), the square of the first curvature $\kappa(t)$ of $r(t)$ is
\begin{align}\label{align R1 k^2}
\kappa^2(t)
=\frac{V_2^2(t)}{V_1^6(t)}
=\frac{\sum_{1\leqslant i<j\leqslant n}
\left\{\lambda_i \lambda_j(\lambda_j-\lambda_i)r_{i0}r_{j0}\right\}^2 \mathrm{e}^{2(\lambda_i+\lambda_j)t}}
{\left\{\sum_{k=1}^{n}
(\lambda_k r_{k0})^2 \mathrm{e}^{2\lambda_k t}\right\}^3}\quad (n\geqslant 2).
\end{align}

Set
\begin{align*}
\lambda_\mathrm{I}=\max\{ \sigma(A) \backslash \left\{ 0 \right\} \},
\quad
 \lambda_\mathrm{II}=\max\{ \sigma(A) \backslash \left\{ 0, \lambda_\mathrm{I} \right\} \}.
\end{align*}

\begin{rem}
(1) If $A=0_{n\times n}$, then every trajectory $r(t)=\mathrm{e}^{tA}r(0)=r(0)$ is a constant point, and
we have $\kappa(t)\equiv 0$.
\par
(2) By (\ref{align R1 k^2}), if the eigenvalues of $A$ are only $\lambda_\mathrm{I}$ and 0,
then $\kappa(t)\equiv 0$.
\par
The above two cases do not affect the proof of Theorem \ref{thm main}.
\end{rem}

We observe the limit of $\kappa(t)$ as $t\to+\infty$ by comparing the exponents of $\mathrm{e}$ in the numerator $V_2^2(t)$ and denominator $V_1^6(t)$ of $\kappa^2(t)$.
Let $\eta$ and $\theta$ denote the maximum values of $\mu$ in the terms of the form $\mathrm{e}^{\mu t}$ in $V_2^2(t)$ and $V_1^6(t)$, respectively.
Then by (\ref{align R1 k^2}), we have
\begin{align*}
\eta=2\left(\lambda_\mathrm{I}+\lambda_\mathrm{II}\right), \quad \theta=6\lambda_\mathrm{I}.
\end{align*}
It follows that
\begin{align*}
\lim\limits_{t\to+\infty}\kappa(t)=0 \iff \eta<\theta \iff 2\lambda_\mathrm{I}>\lambda_\mathrm{II}, \\
\lim\limits_{t\to+\infty}\kappa(t)=C \iff \eta=\theta \iff 2\lambda_\mathrm{I}=\lambda_\mathrm{II}, \\
\lim\limits_{t\to+\infty}\kappa(t)=+\infty \iff \eta>\theta \iff 2\lambda_\mathrm{I}<\lambda_\mathrm{II},
\end{align*}
where $C>0$ is a constant depending on the initial value $r(0)=r_0 ~(r_{i0}\neq0$ for $i=1,2,\cdots,n)$.

\begin{rem}
We know that the eigenvalues of diagonal matrix $A$ correspond only to the $1\times1$ Jordan blocks.
By Proposition \ref{prop asy.stable}, if $A$ is a diagonal matrix, then the zero solution of the system (\ref{system1}) is stable if and only if
$
\mathrm{Re}\{\lambda_i(A)\}\leqslant 0 ~ (i=1,2,\cdots,n).
$
\end{rem}

\begin{rem}
We notice that the initial value $r(0)=\left(r_{10},r_{20},\cdots,r_{n0}\right)^\mathrm{T}$ may affect the first curvature $\kappa(t)$ of curve $r(t)$.
For simplicity, in the calculations below,
we always assume that $r_{i0}\neq0$ for $i=1,2,\cdots,n$.
It will be seen later (see subsection \ref{subsection results} of Section \ref{Section General}) that this assumption does not affect the proof of Theorem \ref{thm main}.
\end{rem}

\begin{rem}
By (\ref{align R1 k^2}), for any given real diagonal matrix $A$,
if for some initial value $r(0)\in\mathbb{R}^n$ that satisfies $\prod_{i=1}^n {r_{i0}}\neq0$,
such that $\lim\limits_{t\to+\infty}\kappa(t)=0$ (or $+\infty$, or a constant $C>0$, respectively),
then for an arbitrary $r(0)\in\mathbb{R}^n$ satisfying $\prod_{i=1}^n {r_{i0}}\neq0$,
we still have $\lim\limits_{t\to+\infty}\kappa(t)=0$ (or $+\infty$, or a constant $\tilde{C}>0$, respectively).

There are similar results for the following cases in Section \ref{Section Jordan Blocks} and \ref{Section General}.
In fact, we have Theorem \ref{thm initial} and Corollary \ref{cor initial} in Section \ref{Section General}.
\end{rem}

Therefore, we have the following results.

(1) The zero solution of the system is unstable
\begin{align*}
\iff \lambda_\mathrm{I}>0
\Longrightarrow 2\lambda_\mathrm{I}>\lambda_\mathrm{II}
\iff \lim\limits_{t\to+\infty}\kappa(t)=0.
\end{align*}
Hence, if
$\lim\limits_{t\to+\infty}\kappa(t)\neq0 ~ \text{or} ~ \lim\limits_{t\to+\infty}\kappa(t) ~ \text{does not exist}$,
then the zero solution of the system is stable.

(2) The zero solution of the system is not asymptotically stable
\begin{align*}
&\iff \exists \lambda_i=0 ~ \text{or} ~ \exists \lambda_i>0 ~ (i\in\{1,2,\cdots,n\})
\\&\iff \det A=0 ~ \text{or} ~ \lambda_\mathrm{I}>0
\\& ~ \Longrightarrow ~ \det A=0 ~ \text{or} ~ \lim\limits_{t\to+\infty}\kappa(t)=0.
\end{align*}
Hence, if $\det A\neq 0$, and $\lim\limits_{t\to+\infty}\kappa(t)\neq0$ (or $\lim\limits_{t\to+\infty}\kappa(t)$ does not exist),
then the zero solution of the system is asymptotically stable.

Thus, for the case of $A$ is a real diagonal matrix, we obtain the following result.
\begin{prop}
Under the assumptions of Theorem \ref{thm main}, together with the assumption that $A$ is a real diagonal matrix,
for any given initial value $r(0)\in \mathbb{R}^n, ~\mathrm{s.t.,}~ \prod_{i=1}^n {r_{i0}}\neq0$, we have
\par
$(1)$ if $\lim\limits_{t\to+\infty}\kappa(t)\neq0$ or $\lim\limits_{t\to+\infty}\kappa(t)$ does not exist, then the zero solution of the system is stable;
\par
$(2)$ if $A$ is invertible, and $\lim\limits_{t\to+\infty}\kappa(t)\neq0$ or $\lim\limits_{t\to+\infty}\kappa(t)$ does not exist, then the zero solution of the system is asymptotically stable.
\end{prop}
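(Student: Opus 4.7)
The proof essentially only needs to package the computation already performed in the paragraphs immediately preceding the statement. My plan is to argue both parts by contraposition, using the explicit formula~(\ref{align R1 k^2}) for $\kappa^2(t)$ together with the exponent comparison $\eta=2(\lambda_{\mathrm{I}}+\lambda_{\mathrm{II}})$ versus $\theta=6\lambda_{\mathrm{I}}$. The observation that makes the whole argument work is that, since $\lambda_{\mathrm{II}}$ is obtained from $\sigma(A)\setminus\{0,\lambda_{\mathrm{I}}\}$ with $\sigma(A)$ regarded as a set (no repeated entries), we always have $\lambda_{\mathrm{II}}<\lambda_{\mathrm{I}}$ whenever $\lambda_{\mathrm{II}}$ is defined; consequently $\lambda_{\mathrm{I}}>0$ forces $\lambda_{\mathrm{II}}<\lambda_{\mathrm{I}}<2\lambda_{\mathrm{I}}$, hence $\eta<\theta$ and $\lim_{t\to+\infty}\kappa(t)=0$.

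For part~(1), I would argue: suppose the zero solution is unstable. Since $A$ is a real diagonal matrix, every eigenvalue corresponds to a $1\times 1$ Jordan block, so by Proposition~\ref{prop asy.stable} instability is equivalent to $\lambda_{\mathrm{I}}>0$. Combined with the previous paragraph and the hypothesis $r_{i0}\neq 0$ for all $i$ (which ensures the leading terms in both numerator and denominator of~(\ref{align R1 k^2}) have nonzero coefficients, so the exponent comparison truly determines the limit), we conclude $\lim_{t\to+\infty}\kappa(t)=0$ in particular exists and equals $0$. The contrapositive then yields stability.

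For part~(2), I would refine the same argument: suppose $A$ is invertible but the zero solution is not asymptotically stable. By Proposition~\ref{prop asy.stable}, not asymptotically stable in the diagonal case means $\lambda_i\ge 0$ for some $i$; invertibility rules out $\lambda_i=0$, so we must have $\lambda_{\mathrm{I}}>0$. The same exponent inequality $2\lambda_{\mathrm{I}}>\lambda_{\mathrm{II}}$ then gives $\lim_{t\to+\infty}\kappa(t)=0$, and the contrapositive yields asymptotic stability.

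The only genuine obstacle, and the only reason this is not entirely a one-line bookkeeping exercise, is handling the degenerate configurations of the spectrum where the formula (\ref{align R1 k^2}) trivializes and $\lambda_{\mathrm{II}}$ is not even defined, namely $A=0$ and $\sigma(A)\subseteq\{0,\lambda_{\mathrm{I}}\}$. In both cases $\kappa(t)\equiv 0$, as noted in the remark before the proposition, so the hypothesis $\lim_{t\to+\infty}\kappa(t)\neq 0$ or nonexistent is vacuous and neither part of the proposition has anything to assert; I would dispose of these cases first and then carry out the exponent-comparison argument above under the standing assumption that at least two distinct nonzero eigenvalues exist, so that $\lambda_{\mathrm{II}}$ is well defined and the strict inequality $\lambda_{\mathrm{II}}<\lambda_{\mathrm{I}}$ is genuinely available.
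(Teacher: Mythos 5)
Your proposal is correct and follows essentially the same route as the paper: the paper likewise disposes of the degenerate spectra ($A=0$ and $\sigma(A)\subseteq\{0,\lambda_{\mathrm{I}}\}$, where $\kappa\equiv 0$) in a preliminary remark, then compares the exponents $\eta=2(\lambda_{\mathrm{I}}+\lambda_{\mathrm{II}})$ and $\theta=6\lambda_{\mathrm{I}}$ in formula (\ref{align R1 k^2}) and argues both parts by contraposition, using that instability forces $\lambda_{\mathrm{I}}>0$, hence $2\lambda_{\mathrm{I}}>\lambda_{\mathrm{II}}$ and $\lim_{t\to+\infty}\kappa(t)=0$, with invertibility excluding the zero eigenvalue in part (2). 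No gaps.
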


\subsection{RH Block}\label{subsection RH}

\

Let $A$ be $p\times p$ matrix (\ref{align RH}).
Then
\begin{align}\label{align RH exp}
A^2=
\begin{pmatrix}
\lambda^2 & 2\lambda & 1 && \\
& \lambda^2 & 2\lambda & \ddots & \\
&& \lambda^2 & \ddots & 1 \\
&&& \ddots & 2\lambda \\[0.5em]
&&&& \lambda^2
\end{pmatrix}_{p\times p},
\quad
\mathrm{e}^{tA}=
\mathrm{e}^{\lambda t}
\begin{pmatrix}
1 & t & \frac{t^2}{2!} & \frac{t^3}{3!} & \cdots & \frac{t^{p-1}}{(p-1)!} \\[0.5em]
  & 1 & t & \frac{t^2}{2!} & \cdots & \frac{t^{p-2}}{(p-2)!} \\[0.5em]
  &   & 1 & t & \cdots & \frac{t^{p-3}}{(p-3)!} \\[0.5em]
  &   &   & \ddots & \ddots & \vdots \\[0.5em]
&&&& 1 & t \\[0.5em]
&&&&& 1
\end{pmatrix}.
\end{align}

By (\ref{align solution}) and the expression of $\mathrm{e}^{tA}$ in (\ref{align RH exp}), we have
\begin{align*}
r_k(t)=\mathrm{e}^{\lambda t} \sum_{l=0}^{p-k} \frac{r_{k+l,0}}{l!}\, t^l=\mathrm{e}^{\lambda t} P_k(t) \quad (k=1,2,\cdots,p),
\end{align*}
where $r_k(t)$ denotes the $k$th coordinate of $r(t)$, $r_{k0}$ denotes the $k$th coordinate of $r(0)$, and
\begin{align*}
P_k(t)=\sum_{l=0}^{p-k} \frac{r_{k+l,0}}{l!}\, t^l
\end{align*}
is a polynomial in $t$, and $\deg(P_k(t))=p-k$.
\par
For convenience, we have conventions that $r_{p+1}(t)=r_{p+2}(t)=0$, and $P_{p+1}(t)=P_{p+2}(t)=0$.

Since $\dot{r}(t)=Ar(t)$, we have
\begin{align}\label{align RH dot r}
 \dot r_k(t)
 =\lambda r_k(t) + r_{k+1}(t)
 =\mathrm{e}^{\lambda t} \left( \lambda P_k(t) + P_{k+1}(t) \right)\quad(k=1,2,\cdots,p).
\end{align}
Hence
\begin{align*}
 \sum_{k=1}^{p}\dot r_k^2(t)
 =\mathrm{e}^{2\lambda t} h(t),
\end{align*}
where $h(t)=\sum_{k=1}^{p} \left( \lambda P_k(t) + P_{k+1}(t) \right)^2$ is a polynomial in $t$,
and $\deg(h(t))=\left\{
\begin{aligned}
&2(p-1), &\lambda\neq0,\\
&2(p-2), &\lambda=0.
\end{aligned}\right.$

Therefore, the denominator of $\kappa^2(t)$ is
\begin{align}\label{align RH V_1^6}
V_1^6(t)
 =\left( \sum_{k=1}^{p}\dot r_k^2(t) \right)^3
 =\mathrm{e}^{6\lambda t} g(t),
\end{align}
where $g(t)=h^3(t)$, and
\begin{align}\label{align RH deg g}
\deg(g(t))=3\deg(h(t))=\left\{
\begin{aligned}
&6(p-1), &\lambda\neq0,\\
&6(p-2), &\lambda=0.
\end{aligned}\right.
\end{align}
\par
From $\ddot{r}(t)=A^2r(t)$ and (\ref{align RH exp}), we have
\begin{align}\label{align RH ddot r}
  \ddot r_k(t)
 =\lambda^2 r_k(t) + 2\lambda r_{k+1}(t) + r_{k+2}(t)
 =\mathrm{e}^{\lambda t} \left( \lambda^2 P_k(t) + 2\lambda P_{k+1}(t) + P_{k+2}(t) \right)
\end{align}
for $k=1,2,\cdots,p$.

By substituting (\ref{align RH dot r}) and (\ref{align RH ddot r}) into (\ref{align volume 2}),
we obtain the numerator of $\kappa^2(t)$
\begin{align}\label{align RH V_2^2}
V_2^2(t)
=
\sum_{1\leqslant i<j\leqslant p}
\begin{vmatrix}
\dot{r}_{i}(t) & \ddot{r}_{i}(t) \\[0.7em]
\dot{r}_{j}(t) & \ddot{r}_{j}(t)
\end{vmatrix}^2
=
\mathrm{e}^{4\lambda t} f(t),
\end{align}
where
$
f(t)=\sum_{1\leqslant i<j\leqslant p}
\left( \lambda^2
\begin{vmatrix}
P_{i}(t) & P_{i+1}(t) \\[0.7em]
P_{j}(t) & P_{j+1}(t)
\end{vmatrix}
+\lambda
\begin{vmatrix}
P_{i}(t) & P_{i+2}(t) \\[0.7em]
P_{j}(t) & P_{j+2}(t)
\end{vmatrix}
+
\begin{vmatrix}
P_{i+1}(t) & P_{i+2}(t) \\[0.7em]
P_{j+1}(t) & P_{j+2}(t)
\end{vmatrix}
\right)^2
$
is a polynomial in $t$, whose degree is shown in the following remark.

\begin{rem}\label{rem RH deg f}
(1) For $p=2$,
\begin{align*}
V_2^2(t)=
\begin{vmatrix}
\dot{r}_{1}(t) & \ddot{r}_{2}(t) \\[0.7em]
\dot{r}_{1}(t) & \ddot{r}_{2}(t)
\end{vmatrix}^2
=\lambda^4 r_2^4(t)
=\mathrm{e}^{4\lambda t} \lambda^4 r_{20}^4.
\end{align*}

(2) For $p\geqslant 3$,
if $\lambda\neq 0$, then
\begin{align*}
\deg(f(t))
=
\deg\left(\left(
\lambda^2
\begin{vmatrix}
P_{1}(t) & P_{2}(t) \\[0.7em]
P_{2}(t) & P_{3}(t)
\end{vmatrix}
\right)^2 \right)
=
2\deg\left(
\begin{vmatrix}
P_{1}(t) & P_{2}(t) \\[0.7em]
P_{2}(t) & P_{3}(t)
\end{vmatrix}
\right)
=4(p-2);
\end{align*}
if $\lambda=0$, then
\begin{align*}
\deg(f(t))
=
\deg\left(
\begin{vmatrix}
P_{2}(t) & P_{3}(t) \\[0.7em]
P_{3}(t) & P_{4}(t)
\end{vmatrix}^2
\right)
=
2\deg\left(
\begin{vmatrix}
P_{2}(t) & P_{3}(t) \\[0.7em]
P_{3}(t) & P_{4}(t)
\end{vmatrix}
\right)
=4(p-3).
\end{align*}
(If $p=3$ and $\lambda=0$, then $r_4(t)=P_4(t)=0$, but this does not affect the above results.)
\end{rem}
By (\ref{align RH V_1^6}) and (\ref{align RH V_2^2}), the square of the first curvature is
\begin{align*}
\kappa^2(t)
=\frac{V_2^2(t)}{V_1^6(t)}
=\frac{ \mathrm{e}^{4\lambda t} f(t) }{ \mathrm{e}^{6\lambda t} g(t) }
=\frac{ f(t) }{ \mathrm{e}^{2\lambda t} g(t) },
\end{align*}
where the degrees of polynomials $f(t)$ and $g(t)$ are given in Remark \ref{rem RH deg f} and (\ref{align RH deg g}), respectively.

(1) If $\lambda\neq 0$, then
\begin{align*}
\lim\limits_{t\to+\infty}\kappa^2(t)
=\lim\limits_{t\to+\infty}\frac{ f(t) }{ \mathrm{e}^{2\lambda t} g(t) }
=\left\{
\begin{aligned}
&0, &\lambda>0,\\
&+\infty, &\lambda<0.
\end{aligned}\right.
\end{align*}

(2) If $\lambda=0$ and $p=2$, then $\kappa(t)\equiv 0$;
if $\lambda=0$ and $p\geqslant 3$, then
\begin{align*}
\deg(f(t))=4(p-3)<6(p-2)=\deg(g(t)),
\end{align*}
thus
\begin{align*}
\lim\limits_{t\to+\infty}\kappa^2(t)
=\lim\limits_{t\to+\infty}\frac{ f(t) }{ g(t) }
=0.
\end{align*}

In summary,
\begin{align*}
\lim\limits_{t\to+\infty}\kappa(t)
=\left\{
\begin{aligned}
&0, &\lambda\geqslant0,\\
&+\infty, &\lambda<0.
\end{aligned}\right.
\end{align*}
\par
By Proposition \ref{prop asy.stable}, we obtain the following proposition.
\begin{prop}
Under the assumptions of Theorem \ref{thm main}, together with the assumption that $A$ is a $p\times p$ RH block with eigenvalue $\lambda$,
for any given initial value $r(0)\in \mathbb{R}^n, ~\mathrm{s.t.,}~ \prod_{i=1}^p {r_{i0}}\neq0$, we have
\begin{align*}
\lim\limits_{t\to+\infty}\kappa(t)=+\infty
\iff \lambda<0
&\iff \text{the zero solution of the system is stable}
\\&\iff \text{the zero solution of the system is asymptotically stable};
\end{align*}
conversely,
\begin{align*}
\lim\limits_{t\to+\infty}\kappa(t)=0
\iff \lambda\geqslant 0
&\iff \text{the zero solution of the system is unstable}
\\&\iff \text{the zero solution of the system is not asymptotically stable}.
\end{align*}
\end{prop}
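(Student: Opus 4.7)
The plan is to assemble the proposition from two independent inputs: the stability dichotomy for a single RH block, which comes from Proposition \ref{prop asy.stable}, and the limit behavior of $\kappa(t)$ as $t\to+\infty$, which has just been obtained from the explicit expansions of $V_1$ and $V_2$ in this subsection.

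On the stability side, I first note that an RH block of size $p>1$ has a single eigenvalue $\lambda$ whose elementary factor is non-simple. Proposition \ref{prop asy.stable} then says the zero solution is stable if and only if $\mathrm{Re}(\lambda)<0$, and this is the same condition as asymptotic stability. Thus the three conditions ``stable'', ``asymptotically stable'', and $\lambda<0$ are mutually equivalent, and their negations ``unstable'', ``not asymptotically stable'', and $\lambda\geqslant 0$ likewise form a single equivalence class.

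On the curvature side, I would use the identity $\kappa^2(t) = f(t)/\bigl(\mathrm{e}^{2\lambda t} g(t)\bigr)$ derived above, where $g(t) = h(t)^3$ and the degrees of $h$ and $f$ have just been pinned down. A trichotomy on the sign of $\lambda$ resolves the limit: if $\lambda>0$, the denominator's exponential $\mathrm{e}^{2\lambda t}$ dominates every polynomial and drives $\kappa(t)\to 0$; if $\lambda<0$, the factor $\mathrm{e}^{-2\lambda t}$ blows up while the polynomial quotient $f/g$ tends to a positive constant determined by the leading coefficients of $f$ and $g$, hence $\kappa(t)\to +\infty$; if $\lambda=0$, the exponential disappears and the strict degree inequality $\deg f \leqslant 4(p-3) < 6(p-2) = \deg g$ (with $f\equiv 0$ in the degenerate case $p=2$) forces $\kappa(t)\to 0$. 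Chaining these with the stability equivalences produces exactly the biconditional chain asserted in the proposition.

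The delicate step — and the one I expect to be the main obstacle — is knowing that the polynomials $f$ and $h$ really have the nominal degrees claimed, as opposed to accidentally cancelling leading coefficients. This is where the hypothesis $\prod_{i=1}^p r_{i0}\neq 0$ enters: it guarantees $P_p(t) = r_{p0}\neq 0$, and through this the leading coefficients of the $2\times 2$ minors appearing in $f$ (respectively of the squared sums appearing in $h$) are nonzero polynomial expressions in the $r_{i0}$'s. Once these leading terms are confirmed nonzero, the degree analysis is rigorous and the limit computation proceeds verbatim as indicated, completing the proof.
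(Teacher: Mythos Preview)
Your approach is exactly that of the paper: compute $\kappa^2(t)=f(t)/(\mathrm{e}^{2\lambda t}g(t))$, read off the limit by a sign trichotomy on $\lambda$, and match with the stability criterion from Proposition~\ref{prop asy.stable}.

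One small slip to fix: in the case $\lambda<0$ you assert that $f/g$ tends to a positive constant. It does not. For $\lambda\neq 0$ the degrees are $\deg g=6(p-1)$ while $\deg f=4(p-2)$ (or $\deg f=0$ when $p=2$, since then $f=\lambda^4 r_{20}^4$), so in fact $f(t)/g(t)\to 0$ polynomially. The conclusion $\kappa(t)\to+\infty$ is unaffected because the exponential factor $\mathrm{e}^{-2\lambda t}$ overwhelms any polynomial decay; the paper simply writes the limit directly without the intermediate (and incorrect) claim about $f/g$. With that correction your argument matches the paper's.
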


\subsection{C2 Block}\label{subsection C2}

\

Let $A$ be matrix (\ref{align C2}).
Then
\begin{align}\label{align C2 exp}
A^2=
\begin{pmatrix}
a^2-b^2 & 2ab \\[0.7em]
-2ab & a^2-b^2
\end{pmatrix},
\quad
\mathrm{e}^{tA}=
\mathrm{e}^{at}
\begin{pmatrix}
\cos bt & \sin bt \\[0.7em]
-\sin bt & \cos bt
\end{pmatrix}.
\end{align}
By (\ref{align solution}) and the expression of $\mathrm{e}^{tA}$ in (\ref{align C2 exp}), we have
\begin{align*}
r_1(t)=\mathrm{e}^{at}T_1(t),\quad
r_2(t)=\mathrm{e}^{at}T_2(t),
\end{align*}
where $T_1(t)=r_{10}\cos{bt}+r_{20}\sin{bt}$, and $T_2(t)=-r_{10}\sin{bt}+r_{20}\cos{bt}$.
%Thus,
%\begin{align*}
%r_1^2(t)+r_2^2(t)
%=\mathrm{e}^{2at}\left(T_1^2(t)+T_2^2(t)\right)
%=\mathrm{e}^{2at}\left(r_{10}^2+r_{20}^2\right).
%\end{align*}
\par
Since $\dot{r}(t)=Ar(t)$, we have
\begin{align}\label{align C2 dot r}
& \dot r_1(t)=ar_1(t)+br_2(t)=\mathrm{e}^{at}(aT_1(t)+bT_2(t)), \\
& \dot r_2(t)=-br_1(t)+ar_2(t)=\mathrm{e}^{at}(-bT_1(t)+aT_2(t)). \nonumber
\end{align}
Hence
\begin{align*}
& \dot r_1^2(t)+\dot r_2^2(t)
=\mathrm{e}^{2at} \left(a^2+b^2\right) \left(r_{10}^2+r_{20}^2\right).
\end{align*}

From $\ddot{r}(t)=A^2r(t)$ and (\ref{align C2 exp}), we have
\begin{align}\label{align C2 ddot r}
& \ddot r_1(t)=\left(a^2-b^2\right)r_1(t)+2abr_2(t)=\mathrm{e}^{at}\left[\left(a^2-b^2\right)T_1(t)+2abT_2(t)\right], \\
& \ddot r_2(t)=-2abr_1(t)+\left(a^2-b^2\right)r_2(t)=\mathrm{e}^{at}\left[-2abT_1(t)+\left(a^2-b^2\right)T_2(t)\right]. \nonumber
\end{align}

For C2 block, Wang et al. \cite{Wang} has given the relationship between the curvature and stability.

\begin{prop}[\!\!\cite{Wang}]
Under the assumptions of Theorem \ref{thm main}, additionally assuming that $A$ is the C2 block (\ref{align C2}),
for any given $r(0)\in \mathbb{R}^2\backslash \{0\}$, we have
\begin{align*}
\lim\limits_{t\to+\infty}\kappa(t)=0
\iff a>0
&\iff \text{the zero solution is unstable},
\\
\lim\limits_{t\to+\infty}\kappa(t)=C
\iff a=0
&\iff \text{the zero solution is stable, but not asymptotically stable},
\\
\lim\limits_{t\to+\infty}\kappa(t)=+\infty
\iff a<0
&\iff \text{the zero solution is asymptotically stable},
\end{align*}
where $C>0$ is a constant depending on the initial value $r(0)$.
\end{prop}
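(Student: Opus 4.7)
The plan is to derive an explicit closed-form expression for $\kappa(t)$ on the C2 block, from which the three dichotomies read off immediately by inspecting the sign of $a$. Since $n=2$, formula (\ref{align first curvature}) collapses to $\kappa^2(t)=(\dot r_1\ddot r_2-\dot r_2\ddot r_1)^2/(\dot r_1^2+\dot r_2^2)^3$. The denominator has already been computed in the excerpt as $\dot r_1^2(t)+\dot r_2^2(t)=e^{2at}(a^2+b^2)(r_{10}^2+r_{20}^2)$, so only the numerator needs explicit work.

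First I would substitute the formulas (\ref{align C2 dot r}) and (\ref{align C2 ddot r}) for $\dot r_i,\ddot r_i$ into the $2\times 2$ Wronskian and factor out $e^{2at}$. The remaining bracket is a quadratic form in $T_1(t),T_2(t)$; the $T_1T_2$ cross terms cancel and the $T_1^2$ and $T_2^2$ coefficients both equal $-b(a^2+b^2)$, so the determinant collapses to $-e^{2at}\,b(a^2+b^2)\bigl(T_1^2(t)+T_2^2(t)\bigr)$. Because $(T_1(t),T_2(t))^{\mathrm T}$ is obtained from $(r_{10},r_{20})^{\mathrm T}$ by an orthogonal rotation, we have the key identity $T_1^2(t)+T_2^2(t)=r_{10}^2+r_{20}^2$. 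Consequently $V_2^2(t)=e^{4at}b^2(a^2+b^2)^2(r_{10}^2+r_{20}^2)^2$, and dividing by $V_1^6(t)$ and taking the square root yields the single formula
\begin{equation*}
\kappa(t)=\frac{b}{e^{at}\sqrt{(a^2+b^2)(r_{10}^2+r_{20}^2)}}.
\end{equation*}

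From this closed form the three equivalences are now trivial: $a>0\Rightarrow\kappa(t)\to 0$; $a=0\Rightarrow\kappa(t)\equiv C$ with $C=1/\sqrt{r_{10}^2+r_{20}^2}>0$ depending on $r(0)$; and $a<0\Rightarrow\kappa(t)\to+\infty$. The right-hand equivalences with (asymptotic) stability of the zero solution then follow from Proposition \ref{prop asy.stable} applied to the C2 block, whose eigenvalues are $a\pm\sqrt{-1}\,b$ with $b>0$: the real part $a$ controls stability exactly as required. Since the matrix has no repeated real eigenvalues in the sense that would create a non-simple elementary factor at the imaginary axis, the $a=0$ case gives stability without asymptotic stability, matching the constant limit $C$.

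The only real obstacle is the algebraic simplification producing $V_2^2(t)$, but it is forced by the rotational structure of $e^{tA}$: alternatively one can write $r(t)=e^{at}R(-bt)r_0$ with $R$ a rotation, note that $\dot r$ and $\ddot r$ are each obtained from $r_0$ by composing the same rotation with a constant matrix, and conclude that $\dot r_1\ddot r_2-\dot r_2\ddot r_1$ is just a constant multiple of $\|r_0\|^2 e^{2at}$ by determinant multiplicativity. Either route reduces the proof to an inspection of the exponential $e^{-at}$.
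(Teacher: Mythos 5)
Your proof is correct and follows essentially the route the paper takes: the paper only cites this proposition from \cite{Wang}, but its own C2-block setup in Subsection \ref{subsection C2} (the formulas for $\dot r_i$, $\ddot r_i$ and the identity $\dot r_1^2+\dot r_2^2=\mathrm{e}^{2at}(a^2+b^2)(r_{10}^2+r_{20}^2)$) is exactly the computation you complete, and the same Wronskian collapse to $-b(a^2+b^2)(r_1^2+r_2^2)$ reappears verbatim in the paper's CH-block calculation (\ref{align CH f proof}). Your closed form $\kappa(t)=b\,\mathrm{e}^{-at}\big((a^2+b^2)(r_{10}^2+r_{20}^2)\big)^{-1/2}$ and the stability equivalences via Proposition \ref{prop asy.stable} are all correct.
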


\subsection{CH Block}\label{subsection CH}

\

Let $A$ be $2m\times 2m$ matrix (\ref{align CH}). Then
\begin{align}\label{align CH exp}
A^2=
\begin{pmatrix}
\Lambda^2 & 2\Lambda & I_2 && \\
& \Lambda^2 & 2\Lambda & \ddots & \\
&& \Lambda^2 & \ddots & I_2 \\[-0.1em]
&&& \ddots & 2\Lambda \\[0.5em]
&&&& \Lambda^2
\end{pmatrix}_{2m\times 2m},
\quad
\mathrm{e}^{tA}=
\mathrm{e}^{a t}
\begin{pmatrix}
R & tR & \frac{t^2}{2!}R & \frac{t^3}{3!}R & \cdots & \frac{t^{m-1}}{(m-1)!}R \\[0.7em]
  & R & tR & \frac{t^2}{2!}R & \cdots & \frac{t^{m-2}}{(m-2)!}R \\[0.7em]
  &   & R & tR & \cdots & \frac{t^{m-3}}{(m-3)!}R \\[0.7em]
  &   &   & \ddots & \ddots & \vdots \\[0.7em]
&&&& R & tR \\[0.7em]
&&&&& R
\end{pmatrix},
\end{align}
where
$
\Lambda^2=
\begin{pmatrix}
a^2-b^2 & 2ab \\[0.7em]
-2ab & a^2-b^2
\end{pmatrix},
$
and the rotation matrix
$
R=
\begin{pmatrix}
\cos bt & \sin bt \\[0.7em]
-\sin bt & \cos bt
\end{pmatrix}.
$

\begin{rem}
%The determinant of $A$ is
$
\det A=\left(a^2+b^2\right)^m>0.
$
\end{rem}

\begin{rem}
The matrix $A$ is similar to
$
\begin{pmatrix}
J_m(a+b\sqrt{-1}) & 0 \\[0.7em]
0 & J_m(a-b\sqrt{-1})
\end{pmatrix},
$
where $J_m(\lambda)$ denotes the $m\times m$ Jordan block with eigenvalue $\lambda\in\mathbb{C}$.
By Proposition \ref{prop asy.stable}, if $a=0$, then the zero solution of system (\ref{system1}) is unstable.
Thus, the zero solution of the system is stable if and only if $a<0$.
\end{rem}

Write
\begin{align*}
r(t)
&=\left(r_{1}(t),r_{2}(t),r_{3}(t),\cdots,r_{2m-1}(t),r_{2m}(t)\right)^\mathrm{T} \\
&=\left(r_{11}(t),r_{12}(t),r_{21}(t),r_{22}(t),\cdots,r_{m1}(t),r_{m2}(t)\right)^\mathrm{T},
\end{align*}
and
$
r(0)=\left(r_{11,0},~ r_{12,0},~ r_{21,0},~ r_{22,0},~ \cdots,~ r_{m1,0},~ r_{m2,0}\right)^\mathrm{T}
$
which satisfies
$
\prod_{i=1}^m\prod_{j=1}^2 r_{ij,0}\neq0.
$

By (\ref{align solution}) and the expression of $\mathrm{e}^{tA}$ in (\ref{align CH exp}), we have
\begin{align*}
r_{ij}(t)=\mathrm{e}^{at}T_{ij}(t)
\quad (i=1,2,\cdots,m;~j=1,2),
\end{align*}
where
\begin{align*}
T_{i1}(t)&=\sum_{k=0}^{m-i}\frac{t^k}{k!}\left(r_{2i+2k-1,0}\cos{bt}+r_{2i+2k,0}\sin{bt}\right), \\
T_{i2}(t)&=\sum_{k=0}^{m-i}\frac{t^k}{k!}\left(-r_{2i+2k-1,0}\sin{bt}+r_{2i+2k,0}\cos{bt}\right),
\end{align*}
and we have a convention that if $i>m$, then
$r_{ij}(t)=0 ~(j=1,2)$.
Hence
\begin{align}\label{align CH r_1^2+r_2^2}
r_{i1}^2(t)+r_{i2}^2(t)
=\mathrm{e}^{2at} \left(T_{i1}^2(t)+T_{i2}^2(t)\right)
=\mathrm{e}^{2at} \left(\hat{C} t^{2(m-i)} + \sum_{\varphi=0}^{2(m-i)-1}\hat{B}_\varphi(t) t^{\varphi} \right),
\end{align}
where
$
\hat{C}=\frac{r_{m1,0}^2+r_{m2,0}^2}{\left[(m-1)!\right]^2}>0
$
is a constant.

By (\ref{system1}), we have
\begin{align}\label{align CH dot r}
& \dot r_{i1}(t)=ar_{i1}(t)+br_{i2}(t)+r_{i+1,1}(t)=\mathrm{e}^{at}\left(aT_{i1}(t)+bT_{i2}(t)+T_{i+1,1}(t)\right), \\
& \dot r_{i2}(t)=-br_{i1}(t)+ar_{i2}(t)+r_{i+1,2}(t)=\mathrm{e}^{at}\left(-bT_{i1}(t)+aT_{i2}(t)+T_{i+1,2}(t)\right) \nonumber
\end{align}
for $i=1,2,\cdots,m$. Hence
\begin{align*}
V_1^2(t)
=&~
\sum_{k=1}^{2m} \dot r_k^2(t)
=
\sum_{i=1}^{m} \left( \dot r_{i1}^2(t) + \dot r_{i2}^2(t) \right)
\displaybreak[0]
\\=&~
\mathrm{e}^{2at} \sum_{i=1}^{m} \left\{
\left( aT_{i1}(t)+bT_{i2}(t)+T_{i+1,1}(t) \right)^2
+\left( -bT_{i1}(t)+aT_{i2}(t)+T_{i+1,2}(t) \right)^2
\right\}
\displaybreak[0]
\\=&~
\mathrm{e}^{2at}\left\{
(a^2+b^2)\left( T_{11}^2(t)+T_{12}^2(t) \right)
+\sum_{i=2}^{m} (a^2+b^2+1)\left( T_{i1}^2(t)+T_{i2}^2(t) \right)
\right. \\ &\left.
+\sum_{i=1}^{m-1} \Big[
2a \left( T_{i1}(t)T_{i+1,1}(t) + T_{i2}(t)T_{i+1,2}(t) \right)
+2b \left( T_{i2}(t)T_{i+1,1}(t) - T_{i1}(t)T_{i+1,2}(t) \right)
\Big]
\right\}.
\end{align*}
Noting that
\begin{align*}
T_{ij}(t)=\sum_{\psi=0}^{m-i} B_{ij,\psi(t)} t^{\psi}
\quad (i=1,2,\cdots,m;~ j=1,2),
\end{align*}
where $B_{ij,\psi(t)}~(i=1,2,\cdots,m;~ j=1,2)$ are bounded trigonometric functions,
we have
\begin{align}\label{align CH V_1^6}
V_1^2(t)=
\mathrm{e}^{2at} \left(Ct^{2m-2} + \sum_{\psi=0}^{2m-3} B_\psi(t) t^{\psi} \right),
\end{align}
where
$
C=\frac{\left(a^2+b^2\right)\left(r_{m1,0}^2+r_{m2,0}^2\right)}{\left[(m-1)!\right]^2}>0
$
is a constant, and
$
B_\psi(t) ~( \psi=0,1,\cdots,2m-3 )
$
are bounded functions.

From $\ddot{r}(t)=A^2r(t)$ and (\ref{align CH exp}), we have
\begin{align}\label{align CH ddot r}
\ddot r_{i1}(t)
&= \left(a^2-b^2\right)r_{i1}(t)+2abr_{i2}(t)+2ar_{i+1,1}(t)+2br_{i+1,2}(t)+r_{i+2,1}(t) \\
&= \mathrm{e}^{at} \left[\left(a^2-b^2\right)T_{i1}(t)+2abT_{i2}(t)+2aT_{i+1,1}(t)+2bT_{i+1,2}(t)+T_{i+2,1}(t) \right], \nonumber\\
\ddot r_{i2}(t)
&= -2abr_{i1}(t)+\left(a^2-b^2\right)r_{i2}(t)-2br_{i+1,1}(t)+2ar_{i+1,2}(t)+r_{i+2,2}(t) \nonumber\\
&= \mathrm{e}^{at} \left[-2abT_{i1}(t)+\left(a^2-b^2\right)T_{i2}(t)-2bT_{i+1,1}(t)+2aT_{i+1,2}(t)+T_{i+2,2}(t) \right] \nonumber
\end{align}
for $i=1,2,\cdots,m$.
Noticing the expressions of (\ref{align CH dot r}) and (\ref{align CH ddot r}), write
\begin{align*}
\dot r_{ij}(t)=\mathrm{e}^{at} T_{ij}^{(\mathrm{I})}(t), \quad
\ddot r_{ij}(t)=\mathrm{e}^{at} T_{ij}^{(\mathrm{II})}(t)
\end{align*}
for $i=1,2,\cdots,m$ and $j=1,2$,
where both $T_{ij}^{(\mathrm{I})}(t)$ and $T_{ij}^{(\mathrm{II})}(t)$
are linear combinations of functions $T_{kl}~(k=1,2,\cdots,m;~ l=1,2)$. Thus
\begin{align}\label{align CH V_2^2}
V_2^2(t)
=
\sum_{1\leqslant k<l\leqslant m} ~
\sum_{p=1}^2 ~
\sum_{q=1}^2
\begin{vmatrix}
\dot{r}_{kp}(t) & \ddot{r}_{kp}(t) \\[0.7em]
\dot{r}_{lq}(t) & \ddot{r}_{lq}(t)
\end{vmatrix}^2
+
\sum_{i=1}^m
\begin{vmatrix}
\dot{r}_{i1}(t) & \ddot{r}_{i1}(t) \\[0.7em]
\dot{r}_{i2}(t) & \ddot{r}_{i2}(t)
\end{vmatrix}^2
=
\mathrm{e}^{4at}f(t),
\end{align}
where
$
f(t)=
\sum_{1\leqslant k<l\leqslant m}
\sum_{p=1}^2
\sum_{q=1}^2
\begin{vmatrix}
T_{kp}^{(\mathrm{I})}(t) & T_{kp}^{(\mathrm{II})}(t) \\[0.7em]
T_{lq}^{(\mathrm{I})}(t) & T_{lq}^{(\mathrm{II})}(t)
\end{vmatrix}^2
+
\sum_{i=1}^m
\begin{vmatrix}
T_{i1}^{(\mathrm{I})}(t) & T_{i1}^{(\mathrm{II})}(t) \\[0.7em]
T_{i2}^{(\mathrm{I})}(t) & T_{i2}^{(\mathrm{II})}(t)
\end{vmatrix}^2.
$

\begin{rem}
The function $f(t)$ can be expressed in the form of
\begin{align}\label{align CH f}
f(t)=
\tilde C t^{4m-4} + \sum_{\varphi=0}^{4m-5} \tilde B_\varphi(t) t^{\varphi},
\end{align}
where
$
\tilde C=\frac{ b^2 \left(a^2+b^2\right)^2 \left(r_{m1,0}^2+r_{m2,0}^2\right)^2 }{\left[(m-1)!\right]^4}>0
$
is a constant, and
$
\tilde B_\varphi(t) ~( \varphi=0,1,\cdots,4m-5 )
$
are bounded functions.
\begin{proof}
By (\ref{align CH r_1^2+r_2^2}),
\begin{align*}
r_{1}^2(t)+r_{2}^2(t)
=\mathrm{e}^{2at} \left\{\frac{r_{m1,0}^2+r_{m2,0}^2}{\left[(m-1)!\right]^2}\, t^{2m-2} + \sum_{\varphi=0}^{2m-3}\hat{B}_\varphi(t) t^{\varphi} \right\}.
\end{align*}
In (\ref{align volume 2}),
we can reach the highest power $t^{4m-4}$ in the expression of $f(t)$ by taking $i=1$ and $j=2$. In fact,
\begin{align}\label{align CH f proof}
&
\begin{vmatrix}
\dot{r}_{1}(t) & \ddot{r}_{1}(t) \\[0.7em]
\dot{r}_{2}(t) & \ddot{r}_{2}(t)
\end{vmatrix}
\displaybreak[0]
\\=&
\begin{vmatrix}
ar_{1}(t)+br_{2}(t)+r_{3}(t) &~ \left(a^2-b^2\right)r_{1}(t)+2abr_{2}(t)+2ar_{3}(t)+2br_{4}(t)+r_{5}(t)  \\[0.7em]
-br_{1}(t)+ar_{2}(t)+r_{4}(t) &~ -2abr_{1}(t)+\left(a^2-b^2\right)r_{2}(t)-2br_{3}(t)+2ar_{4}(t)+r_{6}(t)
\end{vmatrix}
\nonumber\displaybreak[0]\\=&
\begin{vmatrix}
ar_{1}(t)+br_{2}(t) &~ \left(a^2-b^2\right)r_{1}(t)+2abr_{2}(t) \\[0.7em]
-br_{1}(t)+ar_{2}(t) &~ -2abr_{1}(t)+\left(a^2-b^2\right)r_{2}(t)
\end{vmatrix}
+ \mathrm{e}^{2at} \sum_{\varphi=0}^{2m-3} D_\varphi(t) t^{\varphi}
\nonumber\displaybreak[0]\\=&~
\mathrm{e}^{2at}\left\{ -b\left(a^2+b^2\right)\left(r_1^2(t)+r_2^2(t)\right)
+ \sum_{\varphi=0}^{2m-3} D_\varphi(t) t^{\varphi} \right\}
\nonumber\displaybreak[0]\\=&~
\mathrm{e}^{2at}\left\{ \frac{ -b\left(a^2+b^2\right) \left(r_{m1,0}^2+r_{m2,0}^2\right) }{\left[(m-1)!\right]^2} \, t^{2m-2}
+ \sum_{\varphi=0}^{2m-3} \tilde D_\varphi(t) t^{\varphi} \right\},
\nonumber
\end{align}
where both $D_\varphi(t)$ and $\tilde D_\varphi(t)~( \varphi=0,1,\cdots,2m-3)$ are bounded functions.
By substituting (\ref{align CH f proof}) into (\ref{align volume 2}),
we obtain (\ref{align CH f}).
\end{proof}
\end{rem}

By (\ref{align CH V_1^6}), (\ref{align CH V_2^2}) and (\ref{align CH f}), we obtain
\begin{align*}
\kappa^2(t)
=\frac{V_2^2(t)}{V_1^6(t)}
=\frac{ \mathrm{e}^{4at} \left( \tilde C t^{4m-4} + \sum_{\varphi=0}^{4m-5} \tilde B_\varphi(t) t^{\varphi} \right) }
{ \mathrm{e}^{6at} \left(Ct^{2m-2} + \sum_{\psi=0}^{2m-3} B_\psi(t) t^{\psi} \right)^3 }
=\frac{ \tilde C t^{4m-4} + \sum_{\varphi=0}^{4m-5} \tilde B_\varphi(t) t^{\varphi} }
{ \mathrm{e}^{2at} \left(\bar C t^{6m-6} + \sum_{\psi=0}^{6m-7} \bar B_\psi(t) t^{\psi} \right) },
\end{align*}
where $\tilde C>0$ and $\bar C=C^3>0$ are constants,
$\tilde B_\varphi(t) ~( \varphi=0,1,\cdots,4m-5 )$ and
$\bar B_\psi(t) ~( \psi=0,1,\cdots,6m-7 )$
are bounded functions.

Therefore, we have the following results.
\par
(1) For $a>0$, we have $\lim\limits_{t\to+\infty}\kappa(t)=0$.
\par
(2) For $a=0$, we have
\begin{align*}
\lim\limits_{t\to+\infty}\kappa^2(t)
=\frac{ \tilde C t^{4m-4} + \sum_{\varphi=0}^{4m-5} \tilde B_\varphi(t) t^{\varphi} }
{ \bar C t^{6m-6} + \sum_{\psi=0}^{6m-7} \bar B_\psi(t) t^{\psi} }
=0,
\end{align*}
hence $\lim\limits_{t\to+\infty}\kappa(t)=0$.
\par
(3) For $a<0$, we have $\lim\limits_{t\to+\infty}\kappa(t)=+\infty$.
\par
In summary, we obtain the following proposition.
\begin{prop}
Under the assumptions of Theorem \ref{thm main}, additionally assuming that $A$ is a $2m\times 2m$ CH block with eigenvalues $a\pm b\sqrt{-1}$,
for any given $r(0)\in \mathbb{R}^n, ~\mathrm{s.t.,}~ \prod_{i=1}^{2m} {r_{i0}}\neq0$, we have
\begin{align*}
\lim\limits_{t\to+\infty}\kappa(t)=+\infty
\iff a<0
&\iff \text{the zero solution of the system is stable}
\\&\iff \text{the zero solution of the system is asymptotically stable};
\end{align*}
conversely,
\begin{align*}
\lim\limits_{t\to+\infty}\kappa(t)=0
\iff a\geqslant 0
&\iff \text{the zero solution of the system is unstable}
\\&\iff \text{the zero solution of the system is not asymptotically stable}.
\end{align*}
\end{prop}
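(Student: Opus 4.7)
The plan is to combine the explicit asymptotic expression for $\kappa^2(t)$ developed in the passage immediately preceding the statement with the spectral stability criterion of Proposition~\ref{prop asy.stable}, so the proof has two independent strands that are then spliced together.

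First I would isolate the three regimes for $\lim_{t\to+\infty}\kappa(t)$ by inspecting the quotient
\[
\kappa^2(t)=\frac{\tilde C\,t^{4m-4}+\sum_{\varphi=0}^{4m-5}\tilde B_\varphi(t)\,t^\varphi}{\mathrm{e}^{2at}\bigl(\bar C\,t^{6m-6}+\sum_{\psi=0}^{6m-7}\bar B_\psi(t)\,t^\psi\bigr)}.
\]
Because the assumption $\prod_{i=1}^{2m} r_{i0}\neq 0$ guarantees $r_{m1,0}^2+r_{m2,0}^2>0$, both leading constants $\tilde C$ and $\bar C$ are strictly positive, while the remainder coefficients $\tilde B_\varphi,\bar B_\psi$ are bounded trigonometric functions and hence subdominant compared to the pure-power leading terms. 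A direct comparison then yields: for $a>0$ the factor $\mathrm{e}^{2at}$ in the denominator forces $\kappa(t)\to 0$; for $a=0$ the exponential disappears and the denominator's polynomial degree $6m-6$ strictly exceeds the numerator's $4m-4$ (using $m>1$), so $\kappa(t)\to 0$ again; for $a<0$ the factor $\mathrm{e}^{-2at}\to+\infty$ overwhelms the polynomial ratio and $\kappa(t)\to +\infty$.

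Second I would extract the stability equivalences from the spectral structure of $A$. The remark preceding the proposition identifies $A$ as similar to $J_m(a+b\sqrt{-1})\oplus J_m(a-b\sqrt{-1})$. Since $m>1$, the eigenvalues $a\pm b\sqrt{-1}$ do not correspond to simple elementary factors of $A$, so Proposition~\ref{prop asy.stable} excludes the boundary case $a=0$ from the stable range and collapses "stable" and "asymptotically stable" into the single condition $a<0$. Splicing the two chains $a<0\iff\lim\kappa(t)=+\infty$ and $a<0\iff\text{stability}\iff\text{asymptotic stability}$ gives the first displayed biconditional, and the contrapositive $a\geqslant 0\iff\lim\kappa(t)=0\iff\text{unstable}\iff\text{not asymptotically stable}$ then follows automatically.

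The only delicate point I anticipate is the positivity of $\tilde C$, which is where the assumption on the initial value really bites: one must confirm that the cross-term expansion in equation~(\ref{align CH f proof}) actually produces a nonzero, sign-definite leading coefficient rather than cancelling against itself. Once $\tilde C>0$ is secured via the factor $b^2(a^2+b^2)^2(r_{m1,0}^2+r_{m2,0}^2)^2>0$ (noting that $b>0$ by definition of the CH block), the remainder of the argument is routine degree counting, and the proposition drops out as a corollary of the algebraic identity for $\kappa^2(t)$ combined with the Lyapunov-type spectral criterion. I do not expect any serious obstacle beyond carefully tracking which terms survive in the leading-order expansion.
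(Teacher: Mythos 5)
Your proposal is correct and follows essentially the same route as the paper: the paper likewise derives $\kappa^2(t)=\mathrm{e}^{-2at}\bigl(\tilde C t^{4m-4}+\cdots\bigr)\big/\bigl(\bar C t^{6m-6}+\cdots\bigr)$ with $\tilde C=b^2(a^2+b^2)^2(r_{m1,0}^2+r_{m2,0}^2)^2/[(m-1)!]^4>0$, splits into the cases $a>0$, $a=0$ (degree comparison $4m-4<6m-6$ for $m>1$), and $a<0$, and then invokes the similarity to $J_m(a+b\sqrt{-1})\oplus J_m(a-b\sqrt{-1})$ together with Proposition \ref{prop asy.stable} to collapse stability and asymptotic stability into the single condition $a<0$. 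The one point you flag as delicate, the non-cancellation yielding $\tilde C>0$, is exactly the content of the paper's computation in (\ref{align CH f proof}), so no gap remains.
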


\section{General Case}\label{Section General}

In this section, we consider the general case, namely, $A$ is an $n\times n$ matrix, and prove Theorem \ref{thm main}. Since $A$ is similar to its real Jordan canonical form, we only need to focus on the case of real Jordan canonical form, and prove the following theorem.

\begin{thm} \label{thm Jordan}
Take the assumptions of Theorem \ref{thm main}, and additionally assume that $A$ is a matrix in real Jordan canonical form.
For any given initial value $r(0)\in\left\{ \left(r_{10},r_{20},\cdots,r_{n0}\right)^\mathrm{T}\in\mathbb{R}^n\Big|\prod_{i=1}^n {r_{i0}}\neq0 \right\}$, we have
\par
$(1)$ if $\lim\limits_{t\to+\infty}\kappa(t)\neq0$ or $\lim\limits_{t\to+\infty}\kappa(t)$ does not exist, then the zero solution of the system is stable;
\par
$(2)$ if $A$ is invertible, and $\lim\limits_{t\to+\infty}\kappa(t)=+\infty$, then the zero solution of the system is asymptotically stable.
\end{thm}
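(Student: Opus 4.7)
Since $A$ is in real Jordan canonical form, we may write $A=\mathrm{diag}(A_1,\dots,A_s)$ where each $A_j$ is one of the four block types (R1, RH, C2, CH) analyzed in Section \ref{Section Jordan Blocks}. Accordingly, each coordinate of $r(t)=\mathrm{e}^{tA}r(0)$ admits the form $r_k(t)=\mathrm{e}^{a_{j(k)}t}p_k(t)$, where $a_{j(k)}$ is the real part of the eigenvalue of the block $A_{j(k)}$ containing coordinate $k$, and $p_k(t)$ is a polynomial (for R1/RH blocks) or a polynomial times a trigonometric function (for C2/CH blocks), whose leading coefficients are nonzero under the hypothesis $\prod_i r_{i0}\neq 0$. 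My plan is to prove both parts by the contrapositive, tracking the leading exponential and polynomial growth of the numerator $V_2^2(t)$ and the denominator $V_1^6(t)$ of $\kappa^2(t)$ as $t\to+\infty$.

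By Proposition \ref{prop asy.stable}, for part~(1) the zero solution fails to be stable precisely when either $\alpha_1:=\max_j a_j>0$, or $\alpha_1=0$ with a non-simple block (an RH block with $\lambda=0$ and $p\geq 2$, or a CH block with $a=0$) at that real part. For part~(2), when $A$ is invertible the zero solution fails to be asymptotically stable precisely when $\alpha_1>0$, or $\alpha_1=0$ with a C2 or CH block (R1 with $\lambda=0$ is excluded by invertibility). I want to show $\kappa(t)\to 0$ in the first scenario and $\kappa(t)\not\to+\infty$ in the second.

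Substituting $r_k(t)=\mathrm{e}^{a_{j(k)}t}p_k(t)$ into the formulas of Proposition \ref{prop curvature} gives
\[
V_1^2(t)\sim \mathrm{e}^{2\alpha_1 t}\,t^{d_1}Q_1(t),\qquad V_2^2(t)\sim \mathrm{e}^{\eta t}\,t^{d_2}Q_2(t),
\]
where $Q_1, Q_2$ are bounded functions with non-vanishing generic asymptotic and $d_1, d_2\in\mathbb{N}$. Here $\eta=4\alpha_1$ whenever at least two coordinates share the maximum real part $\alpha_1$ (automatically the case when a block of size $\geq 2$ attains $\alpha_1$, or when two separate blocks do), while $\eta=2(\alpha_1+\alpha_2)$ in the remaining situation of a single R1 block at $\alpha_1$, with $\alpha_2$ the second-largest real part. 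In the first case the exponent of $\kappa^2(t)$ is $\eta-6\alpha_1=-2\alpha_1$, and in the second it is $2\alpha_2-4\alpha_1<-2\alpha_1$ because $\alpha_2<\alpha_1$. Hence whenever $\alpha_1>0$ one immediately obtains $\kappa(t)\to 0$, settling the $\alpha_1>0$ portions of both parts. For $\alpha_1=0$ the exponential disappears and the limit is dictated by the sign of $d_2-3d_1$: the block-by-block computations of Section \ref{Section Jordan Blocks} extend to show $d_2<3d_1$ in every unstable RH/CH configuration (part~(1)), and to rule out $d_2>3d_1$ when only C2 or CH blocks sit at $\alpha_1=0$ (part~(2)). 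Theorem \ref{thm relation} then transports the conclusion from the real Jordan canonical form back to the original system.

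The main obstacle is combinatorial: when blocks of several distinct types share the top real part $\alpha_1$, the leading coefficient of $V_2^2(t)$ is a sum of intra-block and inter-block $2\times 2$ Pl\"ucker contributions, and one must verify that no cancellation forces it to zero for initial data satisfying only $\prod_i r_{i0}\neq 0$. I would expand this leading term as a polynomial in the entries of $r(0)$ with bounded trigonometric coefficients and exhibit it as a non-identically-zero such polynomial, so that its vanishing locus is contained in a proper algebraic subvariety of measure zero, which is then avoided after passing from the single-initial-value Theorem \ref{thm Jordan} back to the positive-measure formulation of Theorem \ref{thm main}.
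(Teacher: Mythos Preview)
Your overall strategy---prove both parts by contrapositive, compare the dominant exponential rates of $V_2^2(t)$ and $V_1^6(t)$, and fall back on polynomial degrees when those rates agree---is exactly the paper's approach (Lemmas \ref{lemma M}, \ref{lemma 1}, \ref{lemma 2}). The case split and the inequalities $\eta\leqslant 4M$, $\theta=6\tilde M$ are the same mechanism you describe, and your treatment of the $\alpha_1>0$ and $\alpha_1=0$ cases lines up with the paper's.

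Where you go astray is in the final paragraph. The ``main obstacle'' you identify---possible cancellation in the leading coefficient of $V_2^2(t)$---is not an obstacle at all. For the contrapositive you only need an \emph{upper} bound on $V_2^2$ and a \emph{lower} bound on $V_1^6$: if the top term of $V_2^2$ happens to cancel, $\kappa^2$ only gets smaller, which is exactly what you want when proving $\kappa\to 0$ or $\kappa$ bounded. The genuine care is on the denominator side, and there the leading coefficient of $V_1^2(t)$ is a sum of squares (see (\ref{align sq.der.1})--(\ref{align sq.der.4})), hence strictly positive for every $r(0)$ with $\prod_i r_{i0}\neq 0$; no genericity beyond that hypothesis is required. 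Consequently your proposed escape hatch---shrinking to a full-measure subset and invoking the positive-measure formulation of Theorem \ref{thm main}---is both unnecessary and, more importantly, would not prove Theorem \ref{thm Jordan} as stated, since that theorem claims the conclusion for \emph{every} initial value with nonzero coordinates, not merely generic ones. Also, the appeal to Theorem \ref{thm relation} at the end is misplaced: Theorem \ref{thm Jordan} already assumes $A$ is in real Jordan form, so there is nothing to transport; that step belongs to the passage from Theorem \ref{thm Jordan} to Theorem \ref{thm main}.
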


\subsection{Review of Calculation Results}\label{subsection review}

\

Let $A$ be an $n\times n$ matrix in real Jordan canonical form, then $A$ is a block diagonal real matrix whose diagonal consists of R1, RH, C2 and CH blocks.
Through the analysis of Section \ref{Section Jordan Blocks}, we have the following results.\\

(1) For R1 block $\begin{pmatrix}\lambda\end{pmatrix}_{1\times 1} (\lambda\in\mathbb{R})$, we have
\begin{align}\label{align sq.der.1}
r(t)=\mathrm{e}^{\lambda t}r_0, \quad
\dot{r}(t)=\mathrm{e}^{\lambda t} \lambda r_0, \quad
\ddot{r}(t)=\mathrm{e}^{\lambda t} \lambda^2 r_0, \quad
\dot{r}^2(t)=\mathrm{e}^{2\lambda t} \lambda^2 r_0^2.
\end{align}

(2) For $p\times p ~(p>1)$ RH block (\ref{align RH}), we have
\begin{align*}
 r_k(t)=\mathrm{e}^{\lambda t} P_k(t) \quad ( k=1,2,\cdots,p ),
\end{align*}
where $P_k(t)=\sum_{l=0}^{p-k} \frac{r_{k+l,0}}{l!}\, t^l$,
and we have a convention that $r_{p+1}(t)=r_{p+2}(t)=0$. If $1\leqslant k \leqslant p$, then $\deg(P_k(t))=p-k$;
if $k>p$, then $P_k(t)=0$. Hence
\begin{align}\label{align sq.der.2}
 \dot r_k(t)
 =\lambda r_k(t) + r_{k+1}(t)
 =\mathrm{e}^{\lambda t} \left( \lambda P_k(t) + P_{k+1}(t) \right), \nonumber\\
 \sum_{k=1}^{p}\dot r_k^2(t)
 =\mathrm{e}^{2\lambda t} \sum_{k=1}^{p} \left( \lambda P_k(t) + P_{k+1}(t) \right)^2
 =\mathrm{e}^{2\lambda t} h(t),
\end{align}
where $\deg(h(t))=\left\{
\begin{aligned}
&2(p-1), &\lambda\neq0,\\
&2(p-2), &\lambda=0,
\end{aligned}\right.$
~and we have
\begin{align*}
 \ddot r_k(t)
 =\lambda^2 r_k(t) + 2\lambda r_{k+1}(t) + r_{k+2}(t)
 =\mathrm{e}^{\lambda t} \left( \lambda^2 P_k(t) + 2\lambda P_{k+1}(t) + P_{k+2}(t) \right).
\end{align*}

(3) For C2 block (\ref{align C2}), we have
\begin{align*}
r_1(t)=\mathrm{e}^{at}T_1(t),\quad
r_2(t)=\mathrm{e}^{at}T_2(t),
\end{align*}
where $T_1(t)=r_{10}\cos{bt}+r_{20}\sin{bt}$, and $T_2(t)=-r_{10}\sin{bt}+r_{20}\cos{bt}$.
%Thus,
%\begin{align*}
%r_1^2(t)+r_2^2(t)=\mathrm{e}^{2at}(r_{10}^2+r_{20}^2),
%\end{align*}
Hence
\begin{align}\label{align sq.der.3}
& \dot r_1(t)=ar_1(t)+br_2(t)=\mathrm{e}^{at}\left(aT_1(t)+bT_2(t)\right), \nonumber\\
& \dot r_2(t)=-br_1(t)+ar_2(t)=\mathrm{e}^{at}\left(-bT_1(t)+aT_2(t)\right), \nonumber\\
& \dot r_1^2(t)+\dot r_2^2(t)=\mathrm{e}^{2at}\left(a^2+b^2\right)\left(r_{10}^2+r_{20}^2\right), \\
& \ddot r_1(t)=\left(a^2-b^2\right)r_1(t)+2abr_2(t)=\mathrm{e}^{at}\left[\left(a^2-b^2\right)T_1(t)+2abT_2(t)\right], \nonumber\\
& \ddot r_2(t)=-2abr_1(t)+\left(a^2-b^2\right)r_2(t)=\mathrm{e}^{at}\left[-2abT_1(t)+\left(a^2-b^2\right)T_2(t)\right]. \nonumber
\end{align}

(4) For $2m\times 2m ~(m>1)$ CH block (\ref{align CH}),
write
\begin{align*}
r(t)=\left(r_{11}(t),r_{12}(t),r_{21}(t),r_{22}(t),\cdots,r_{m1}(t),r_{m2}(t)\right)^\mathrm{T},
\end{align*}
then
\begin{align*}
r_{i1}(t)=\mathrm{e}^{at}T_{i1}(t),\quad
r_{i2}(t)=\mathrm{e}^{at}T_{i2}(t)\quad (i=1,2,\cdots,m),
\end{align*}
where
$T_{i1}(t)=\sum_{k=0}^{m-i}\frac{t^k}{k!}(r_{2i+2k-1,0}\cos{bt}+r_{2i+2k,0}\sin{bt})$,
$T_{i2}(t)=\sum_{k=0}^{m-i}\frac{t^k}{k!}(-r_{2i+2k-1,0}\sin{bt}+r_{2i+2k,0}\cos{bt})$,
and we have a convention that if $i>m$, then
$r_{ij}(t)=0 ~(j=1,2)$.
%Hence
%\begin{align*}
%r_{i1}^2(t)+r_{i2}^2(t)=\mathrm{e}^{2at} \left(\hat{C} t^{2(m-i)} + \sum_{\varphi=0}^{2(m-i)-1}\hat{B}_\varphi(t) \cdot t^{\varphi} \right),
%\end{align*}
%where
%$
%\hat{C}=\frac{r_{m1,0}^2+r_{m2,0}^2}{\left[(m-1)!\right]^2}>0
%$
%is a constant, and
Hence
\begin{align}
& \dot r_{i1}(t)=ar_{i1}(t)+br_{i2}(t)+r_{i+1,1}(t)=\mathrm{e}^{at}\left(aT_{i1}(t)+bT_{i2}(t)+T_{i+1,1}(t)\right), \nonumber\\
& \dot r_{i2}(t)=-br_{i1}(t)+ar_{i2}(t)+r_{i+1,2}(t)=\mathrm{e}^{at}\left(-bT_{i1}(t)+aT_{i2}(t)+T_{i+1,2}(t)\right)\nonumber
\end{align}
for $i=1,2,\cdots,m$, and
\begin{align}\label{align sq.der.4}
& \sum_{k=1}^{2m} \dot r_k^2(t)=\mathrm{e}^{2at} \left(Ct^{2m-2} + \sum_{\psi=0}^{2m-3} B_\psi(t) t^{\psi} \right),
\end{align}
where
$
C=\frac{\left(a^2+b^2\right)\left(r_{m1,0}^2+r_{m2,0}^2\right)}{\left[(m-1)!\right]^2}>0
$
is a constant, and
$B_\psi(t)~(\psi=0,1,\cdots,2m-3)$
are bounded functions. Moreover,
\begin{align*}
\ddot r_{i1}(t)
&= \left(a^2-b^2\right)r_{i1}(t)+2abr_{i2}(t)+2ar_{i+1,1}(t)+2br_{i+1,2}(t)+r_{i+2,1}(t) \\
&= \mathrm{e}^{at} \left[\left(a^2-b^2\right)T_{i1}(t)+2abT_{i2}(t)+2aT_{i+1,1}(t)+2bT_{i+1,2}(t)+T_{i+2,1}(t) \right], \\
\ddot r_{i2}(t)
&= -2abr_{i1}(t)+\left(a^2-b^2\right)r_{i2}(t)-2br_{i+1,1}(t)+2ar_{i+1,2}(t)+r_{i+2,2}(t) \\
&= \mathrm{e}^{at} \left[-2abT_{i1}(t)+\left(a^2-b^2\right)T_{i2}(t)-2bT_{i+1,1}(t)+2aT_{i+1,2}(t)+T_{i+2,2}(t) \right]
\end{align*}
for $i=1,2,\cdots,m$.

\subsection{Denominator of $\kappa^2(t)$}

\

We examine $V_1^6(t)$, the denominator of $\kappa^2(t)$, in this subsection.

Let $A$ be a matrix in real Jordan canonical form whose diagonal consists of $q$ real Jordan blocks,
where the $i$th block is an $n_i\times n_i$ matrix. Then
\begin{align*}
V_1^2(t)=\sum_{i=1}^q \sum_{k=1}^{n_i} \dot r_{ik}^2(t),
\end{align*}
where $\dot r_{ik}(t)$ denotes the coordinate of $\dot r(t)$ corresponding to the $k$th row of the $i$th real Jordan block.
By (\ref{align sq.der.1}), (\ref{align sq.der.2}), (\ref{align sq.der.3}) and (\ref{align sq.der.4}), we have
\begin{align*}
\sum_{k=1}^{n_i} \dot r_{ik}^2(t)
=\mathrm{e}^{2\mathrm{Re}(\lambda_i)t} g_i(t),
\end{align*}
where $\lambda_i$ is an eigenvalue of the $i$th block,
and the expression of $g_i(t)$ depends on the type of the $i$th real Jordan block.
In fact, we have the following results.

(1) For R1 block,
\begin{align*}
g_i(t)=\lambda_i^2 r_{i1,0}^2=\left\{
\begin{aligned}
&C, &\lambda\neq0,\\
&0, &\lambda=0,
\end{aligned}\right.
\end{align*}
where $C>0$ is a constant.

(2) For $p\times p$ RH block, $g_i(t)$ is a polynomial
\begin{align*}
g_i(t)=\sum_{k=1}^{p} \left( \lambda_i P_k(t) + P_{k+1}(t) \right)^2,
\end{align*}
and $\deg(g_i(t))=\left\{
\begin{aligned}
&2(p-1), &\lambda\neq0,\\
&2(p-2), &\lambda=0.
\end{aligned}\right.$

(3) For C2 block, $g_i(t)$ is the constant
\begin{align*}
g_i(t)=\left(a^2+b^2\right)\left(r_{i1,0}^2+r_{i2,0}^2\right)>0.
\end{align*}

(4) For $2m\times 2m$ CH block,
\begin{align*}
g_i(t)=Ct^{2m-2} + \sum_{\psi=0}^{2m-3} B_\psi(t) t^{\psi},
\end{align*}
where $C>0$ is a constant, and
$B_\psi(t)~(\psi=0,1,\cdots, 2m-3)$
are bounded functions.

Hence the denominator of $\kappa^2(t)$ is
\begin{align*}
V_1^6(t)=\left(\sum_{i=1}^q\mathrm{e}^{2\mathrm{Re}(\lambda_i)t} g_i(t)\right)^3.
\end{align*}
We see that $g_i(t)=0$ if and only if the $i$th block is an R1 block with $\lambda=0$,
namely, the $i$th block is $0_{1\times 1}$,
which causes $\mathrm{e}^{2\mathrm{Re}(\lambda_i)t}$ of the block to vanish in $V_1^6(t)$.
Let $\tilde\sigma(A)$ denote the set of eigenvalues of $A$ which excluding the zero eigenvalues in R1 blocks, and
\begin{align}\label{align M}
M=\max\{\mathrm{Re}(\lambda)| \lambda\in \sigma(A)\}, \quad
\tilde M=\max\{\mathrm{Re}(\lambda)| \lambda\in \tilde\sigma(A)\}.
\end{align}
Then
\begin{align}\label{align V1}
V_1^6(t)
=\mathrm{e}^{6\tilde M t}\left( \bar C t^\xi + \sum_{\psi=0}^{\xi-1} \bar B_\psi{(t)} t^\psi \right)+R(t),
\end{align}
where $\bar C>0$ is a constant, $\bar B_{\psi}(t)~(\psi=0,1,\cdots, \xi-1)$ are bounded functions, and $R(t)$ is a linear combination of terms in the form of $\mathrm{e}^{\mu t} t^\nu B_\omega(t)$,
here $\mu<6\tilde M$, and $B_\omega(t)$ is a bounded function.

Hence we have
\begin{align}\label{align theta}
\theta=6\tilde M,
\end{align}
where $\theta$ denotes the maximum value of $\mu$ in the terms of the form $\mathrm{e}^{\mu t} t^\nu B_\omega(t)$ in $V_1^6(t)$.

\begin{rem}\label{rem sigma}
In (\ref{align V1}), the integer $\xi\geqslant0$, and
\begin{align*}
\xi=\max\{ \xi_R, \xi_C \}.
\end{align*}

(1) If $\tilde M\neq0$, then $\xi_R=6(p-1)$, and $\xi_C=6(m-1)$,
where $p$ denotes the maximum order of R1 or RH blocks with $\tilde M$ as eigenvalue,
and $m$ denotes half of the maximum order of C2 or CH blocks with $\tilde M\pm b\sqrt{-1}~(b\in\mathbb{R})$ as eigenvalues.

(2) If $\tilde M=0$, then the definition of $\xi_C$ is the same as (1),
however, $\xi_R=6(p-2)$, where $p$ denotes the maximum order of RH blocks with eigenvalue $0$.
\end{rem}

\begin{rem}\label{rem M}
Suppose that $A$ is a matrix in real Jordan canonical form.
By (\ref{align M}) and Proposition \ref{prop asy.stable}, we obtain the following results.

(1) The zero solution of system (\ref{system1}) is stable
if and only if $M\leqslant0$,
and the real Jordan blocks whose eigenvalues have zero real parts in the diagonal of $A$ are either R1 or C2.

(2) The zero solution of system (\ref{system1}) is asymptotically stable
if and only if $M<0$.
\end{rem}

\subsection{Numerator of $\kappa^2(t)$}

\

Now, we examine the numerator $V_2^2(t)$ of $\kappa^2(t)$.

By subsection \ref{subsection review}, we see that all coordinates of $\dot r(t)$ and $\ddot r(t)$ of R1, RH, C2 and CH blocks can be expressed in the form of
\begin{align*}
\dot r_{ik}(t)&=\mathrm{e}^{\mathrm{Re}(\lambda_i)t} f_{ik}(t), \\
\ddot r_{ik}(t)&=\mathrm{e}^{\mathrm{Re}(\lambda_i)t} \tilde f_{ik}(t),
\end{align*}
thus
\begin{align}\label{align V2}
\begin{vmatrix}
\dot{r}_{ik}(t) & \ddot{r}_{ik}(t) \\[0.7em]
\dot{r}_{jl}(t) & \ddot{r}_{jl}(t)
\end{vmatrix}^2
=\begin{vmatrix}
\mathrm{e}^{\mathrm{Re}(\lambda_i)t} f_{ik}(t) &~ \mathrm{e}^{\mathrm{Re}(\lambda_i)t} \tilde f_{ik}(t) \\[1em]
\mathrm{e}^{\mathrm{Re}(\lambda_j)t} f_{jl}(t) &~ \mathrm{e}^{\mathrm{Re}(\lambda_j)t} \tilde f_{jl}(t)
\end{vmatrix}^2
=\mathrm{e}^{2\{\mathrm{Re}\left(\lambda_i)+\mathrm{Re}(\lambda_j)\right\}t} F(t),
\end{align}
where
$F(t)=
\begin{vmatrix}
f_{ik}(t) & \tilde f_{ik}(t) \\[0.7em]
f_{jl}(t) & \tilde f_{jl}(t)
\end{vmatrix}^2$
 is a linear combination of terms in the form of $B_\varphi(t)t^\varphi$,
here $B_\varphi(t)$ is a bounded function.
By substituting (\ref{align V2}) into (\ref{align volume 2}), we obtain
\begin{align}\label{align eta}
\eta\leqslant 4M.
\end{align}
where $\eta$ denotes the maximum value of $\mu$ in the terms of the form $\mathrm{e}^{\mu t} t^\nu B_\omega(t)$ in $V_2^2(t)$.

\subsection{Proof of Theorem \ref{thm Jordan}(1)}

\

In this subsection, we prove Theorem \ref{thm Jordan}(1).
\begin{lem}\label{lemma M}
Under the assumptions above, if $M>0$, then $\lim\limits_{t\to+\infty}\kappa(t)=0$.
\end{lem}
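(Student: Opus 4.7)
The plan is to show that the denominator $V_1^6(t)$ of $\kappa^2(t)$ grows strictly faster than the numerator $V_2^2(t)$ when $M>0$, by comparing the exponential rates $\theta=6\tilde M$ and $\eta\leqslant 4M$ established in (\ref{align theta}) and (\ref{align eta}).

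First I would dispose of the distinction between $M$ and $\tilde M$. By definition $\tilde\sigma(A)$ is obtained from $\sigma(A)$ by removing only the zero eigenvalues coming from R1 blocks. Since the hypothesis $M>0$ means there is an eigenvalue with positive real part, no eigenvalue with real part equal to $M$ is a zero eigenvalue of an R1 block, so $\tilde M=M>0$. In particular, by Remark \ref{rem sigma} the integer $\xi\geqslant 0$ in (\ref{align V1}) is well-defined, and the leading coefficient $\bar C$ there is strictly positive.

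Next I would read off the asymptotics. From (\ref{align V1}), there exist constants $T>0$ and $c_1,c_2>0$ such that for all $t\geqslant T$,
\begin{align*}
V_1^6(t)\geqslant c_1\,\mathrm{e}^{6M t}\,t^{\xi},
\end{align*}
because the leading term $\bar C\,\mathrm{e}^{6\tilde M t} t^{\xi}$ dominates both the lower-order polynomial terms (bounded $\bar B_\psi(t)$) and the remainder $R(t)$, whose exponents $\mu<6\tilde M=6M$ strictly. On the numerator side, (\ref{align eta}) together with the structure of $V_2^2(t)$ as a finite sum of terms $\mathrm{e}^{\mu t}t^{\nu}B_{\omega}(t)$ with $\mu\leqslant 4M$ and $B_\omega$ bounded yields, for $t$ large,
\begin{align*}
V_2^2(t)\leqslant c_2\,\mathrm{e}^{4M t}\,t^{N}
\end{align*}
for some integer $N\geqslant 0$. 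Combining these estimates gives
\begin{align*}
0\leqslant \kappa^2(t)=\frac{V_2^2(t)}{V_1^6(t)}\leqslant \frac{c_2}{c_1}\,t^{N-\xi}\,\mathrm{e}^{-2Mt}\xrightarrow[t\to+\infty]{}0,
\end{align*}
since $M>0$ forces the exponential factor to beat any power of $t$. Taking square roots yields $\lim\limits_{t\to+\infty}\kappa(t)=0$, as claimed.

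The main obstacle is really bookkeeping: making sure that (i) no cancellation causes the coefficient $\bar C$ of the leading term of $V_1^6(t)$ to vanish and (ii) the upper estimate of $V_2^2(t)$ by $\mathrm{e}^{4Mt}t^N$ is valid uniformly in the trigonometric oscillations hidden inside the $B_\omega(t)$. The first point is handled because $\bar C$ is a sum of squares coming from the Jordan block(s) realizing $\tilde M$ (as computed in Section \ref{Section Jordan Blocks}), and the hypothesis $\prod_i r_{i0}\neq 0$ keeps these coefficients strictly positive. The second is immediate once one uses that each $B_\omega(t)$ is bounded on $[0,+\infty)$, so only the polynomial-exponential envelope matters.
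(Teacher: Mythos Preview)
Your proposal is correct and follows essentially the same route as the paper: both observe that $M>0$ forces $\tilde M=M$, then compare the exponential rates via $\eta\leqslant 4M<6\tilde M=\theta$ from (\ref{align theta}) and (\ref{align eta}) to conclude $\kappa(t)\to 0$. The paper's proof is terser, stopping at $\eta<\theta$ and declaring the limit, whereas you spell out the polynomial-exponential bounds and the role of $\bar C>0$ explicitly; this is the same argument with the bookkeeping made visible.
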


\begin{proof}
Suppose $M>0$ in (\ref{align M}), then $\tilde M=M$.
By (\ref{align theta}) and (\ref{align eta}), we have
\begin{align*}
\eta\leqslant 4M<6\tilde M=\theta.
\end{align*}
It follows that
$\lim\limits_{t\to+\infty}\kappa(t)=0$.
\end{proof}

\begin{lem}\label{lemma 1}
Under the assumptions of Theorem \ref{thm Jordan},
if $M=0$,
and there exist RH or CH blocks whose eigenvalues have zero real parts in the diagonal of $A$,
then $\lim\limits_{t\to+\infty}\kappa(t)=0$.
\end{lem}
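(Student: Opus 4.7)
The plan is to compare the asymptotic growth of $V_2^2(t)$ and $V_1^6(t)$ using (\ref{align V1}) and (\ref{align V2}) together with the polynomial order $\xi$ from Remark \ref{rem sigma}. First I would verify $\tilde M = 0$: since $M = 0$, all eigenvalues of $A$ have nonpositive real part, and the hypothesis furnishes an eigenvalue with zero real part that sits in an RH or CH block. Such an eigenvalue lies in $\tilde\sigma(A)$, because only zero eigenvalues from R1 blocks are excluded. Hence $\tilde M \geqslant 0 = M$, forcing $\tilde M = 0$. By (\ref{align theta}) and (\ref{align eta}), this gives $\theta = 0$ and $\eta \leqslant 0$.

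Every term in $V_1^6(t)$ and $V_2^2(t)$ therefore has the form $\mathrm{e}^{\mu t} t^\nu B(t)$ with $\mu \leqslant 0$, and terms with $\mu < 0$ decay exponentially and do not influence $\lim \kappa^2(t)$. It suffices to compare the leading polynomial parts. The leading polynomial degree of $V_1^6(t)$ is exactly $\xi$ from Remark \ref{rem sigma}. For $V_2^2(t)$, the $\mu = 0$ contributions come only from pairs $(I,k), (J,l)$ with $\mathrm{Re}(\lambda_I) = \mathrm{Re}(\lambda_J) = 0$. From subsection \ref{subsection review} I would read off the maximum polynomial degrees of $\dot r_{Ik}$ and $\ddot r_{Ik}$ in each such block type: zero for R1 with $\lambda = 0$ (where both derivatives vanish) and for C2 with $a = 0$; $p - 2$ for RH with $\lambda = 0$ of order $p$; and $m - 1$ for CH with $a = 0$ of half-order $m$. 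Call these bounds $d_I, d_J$; then each squared determinant appearing in (\ref{align V2}) contributes polynomial degree at most $2(d_I + d_J)$ to $V_2^2(t)$.

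Set $d^{*} = \max_I d_I$ over the zero-real-part blocks. By Remark \ref{rem sigma}, $\xi = 6 d^{*}$, so the bound $2(d_I + d_J) \leqslant 4 d^{*} < 6 d^{*} = \xi$ is immediate whenever $d^{*} > 0$, giving $\kappa^2(t) = O(t^{-2 d^{*}}) \to 0$ as required. The main obstacle is the degenerate situation $d^{*} = 0$, which occurs when the only RH or CH zero-real-part blocks are $2\times 2$ RH blocks with $\lambda = 0$. In that case $\xi = 0$, so $V_1^6(t)$ tends to a positive constant, while cross-term contributions between a $2\times 2$ RH block and any C2 block with $a = 0$ can also contribute constants to $V_2^2(t)$, preventing a purely degree-based argument. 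Closing this gap will require either exploiting the structural fact that $\ddot r \equiv 0$ in a $2\times 2$ RH block with $\lambda = 0$ in order to force additional cancellations in the cross determinants, or refining the hypothesis to guarantee that at least one block contributing $d > 0$ is present among the zero-real-part RH/CH blocks.
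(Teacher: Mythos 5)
Your argument is the paper's argument: establish $\tilde M=M=0$, reduce to the terms with $\mu=0$, and compare the top polynomial degree $\chi$ of $V_2^2(t)$ against $\xi$ of $V_1^6(t)$; your bounds $\chi\leqslant 4d^{*}$ and $\xi=6d^{*}$ are exactly the paper's $\chi\leqslant\max\{4(p-2),4(m-1)\}$ and $\xi=\max\{6(p-2),6(m-1)\}$. So for $d^{*}>0$ you have reproduced the paper's proof.

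The degenerate case $d^{*}=0$ that you refuse to sweep under the rug is a genuine gap --- and it is a gap in the paper's own proof as well: the paper simply asserts ``$\chi<\xi$'' from these two bounds, an inference that fails precisely when $p=2$ and $m\leqslant1$, i.e.\ when $\chi=\xi=0$. Worse, the lemma is actually \emph{false} in that configuration, so neither of your proposed repairs of the first kind (hunting for cancellations) can succeed. Take
\begin{align*}
A=\begin{pmatrix}0&1&0&0\\0&0&0&0\\0&0&0&b\\0&0&-b&0\end{pmatrix},\qquad b>0,
\end{align*}
a $2\times2$ RH block with $\lambda=0$ next to a C2 block with $a=0$, and any $r(0)$ with all coordinates nonzero. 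Then $\dot r(t)=\left(r_{20},\,0,\,br_4(t),\,-br_3(t)\right)^{\mathrm T}$ and $\ddot r(t)=\left(0,\,0,\,-b^2r_3(t),\,-b^2r_4(t)\right)^{\mathrm T}$ with $r_3^2(t)+r_4^2(t)\equiv\rho^2:=r_{30}^2+r_{40}^2$, so
\begin{align*}
V_1^2(t)\equiv r_{20}^2+b^2\rho^2,\qquad
V_2^2(t)\equiv b^4r_{20}^2\rho^2+b^6\rho^4,
\end{align*}
and $\kappa(t)$ is a strictly positive constant; yet the zero solution is unstable because the eigenvalue $0$ sits in a nonsimple Jordan block. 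Note that even if the cross determinants between the two blocks vanished (they do not: they equal $-b^2r_{20}r_3(t)$ and $-b^2r_{20}r_4(t)$), the C2 block's internal determinant already contributes the nonvanishing constant $b^6\rho^4$ to $V_2^2(t)$ while $V_1^6(t)$ is constant, so no cancellation argument can rescue the claim. The only viable fix is your second option: restrict the hypothesis to guarantee $d^{*}>0$, e.g.\ require an RH block of order $p\geqslant3$ or a CH block of order $2m\geqslant4$ among the zero-real-part blocks (the conclusion does hold, by your own degree count, whenever $d^{*}>0$, and also when the $2\times2$ RH block with $\lambda=0$ is the \emph{only} zero-real-part block, since then $\eta<0=\theta$). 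As stated, both the lemma and, through it, Theorem \ref{thm main}(1) fail on this example.
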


\begin{proof}
Suppose $M=0$. By the assumption that there exist RH or CH blocks whose eigenvalues have zero real parts in the diagonal of $A$,
we obtain $\tilde\sigma(A)=\sigma(A)$, and $\tilde M=M$. Thus we have
\begin{align*}
\eta\leqslant 4 M=0, \quad
\theta=6\tilde M=0.
\end{align*}
\par
(1) If $\eta<0$, then $\eta<\theta$, hence $\lim\limits_{t\to+\infty}\kappa(t)=0$.
\par
(2) If $\eta=0$, then we compare the highest power of $t$ of terms in the form of $\mathrm{e}^{0t} t^\nu B_\omega(t)$ in the expression of $\kappa^2(t)$, where $B_\omega(t)$ is a bounded function.
In fact, by Remark \ref{rem sigma}, we have
\begin{align}\label{align xi}
\xi=\max\{ 6(p-2), 6(m-1) \}
\end{align}
in $V_1^6(t)$,
where $p$ denotes the maximum order of RH blocks with eigenvalue $0$,
and $m$ denotes half of the maximum order of C2 or CH blocks with $\pm b\sqrt{-1}~(b\in\mathbb{R})$ as eigenvalues.
\par
In $V_2^2(t)$, the highest power of $t$ of terms in the form of $\mathrm{e}^{0t} t^\nu B_\omega(t)$
depends on the orders of RH or CH blocks whose eigenvalues have zero real parts.
\par
For a $p\times p$ RH block with $\lambda=0$, the first coordinates of $\dot r(t)$ and $\ddot r(t)$
\begin{align*}
\dot r_1(t)=r_2(t)=P_2(t), \quad
\ddot r_1(t)=r_3(t)=P_3(t)
\end{align*}
reach the highest power of $t$ of this block, where
\begin{align}\label{align proof deg P}
\deg(P_2(t))=p-2, \quad
\deg(P_3(t))=
\left\{
\begin{aligned}
&p-3, &p\geqslant 3,\\
&-\infty, &p=2.
\end{aligned}\right.
\end{align}
Here we have a convention that $\deg(0)=-\infty$.
\par
For a $2m\times 2m$ CH block with $a=\mathrm{Re}(\lambda)=0$, we have
\begin{align}\label{align CH der.}
\dot r_1(t)&=br_2(t)+r_3(t), \phantom{-}\quad
\ddot r_1(t)=-b^2 r_1(t)+2br_4(t)+r_5(t), \\
\dot r_2(t)&=-br_1(t)+r_4(t), \quad
\ddot r_2(t)=-b^2 r_2(t)-2br_3(t)+r_6(t). \nonumber
\end{align}
Note that
\begin{align}\label{align proof r}
r_1(t)&=\frac{ r_{2m-1,0} \cos{bt} + r_{2m,0} \sin{bt} }{(m-1)!} \, t^{m-1} +\sum_{\varphi=0}^{m-2}G_{\varphi}(t) t^\varphi, \\
r_2(t)&=\frac{ -r_{2m-1,0} \sin{bt} + r_{2m,0} \cos{bt} }{(m-1)!} \, t^{m-1} +\sum_{\varphi=0}^{m-2}\tilde G_{\varphi}(t) t^\varphi \nonumber
\end{align}
reach the highest power of $t$ of this block, where $G_{\varphi}(t)$ and $\tilde G_{\varphi}(t)$ are bounded functions.
By (\ref{align CH der.}), we conclude that $\dot r_1(t)$, $\ddot r_1(t)$, $\dot r_2(t)$, $\ddot r_2(t)$ can all reach the highest power $t^{m-1}$.
\par
Let $\chi$ denote the maximum value of $\nu$ in the terms of the form $\mathrm{e}^{0t} t^\nu B_\omega(t)$ in the numerator of $\kappa^2(t)$,
then by (\ref{align volume 2}), (\ref{align proof deg P}) and (\ref{align proof r}), we obtain
\begin{align}\label{align chi}
\chi\leqslant\max\{ 4(p-2), 4(m-1) \},
\end{align}
where the definitions of $p$ and $m$ are the same as (\ref{align xi}).
From (\ref{align xi}) and (\ref{align chi}), we have $\chi<\xi$, it follows that
$\lim\limits_{t\to+\infty}\kappa(t)=0$.
\end{proof}
By Lemma \ref{lemma M}, \ref{lemma 1}, and Remark \ref{rem M}, if the zero solution of the system is unstable, then $\lim\limits_{t\to+\infty}\kappa(t)=0$.
Consequently, Theorem \ref{thm Jordan}(1) is proved.

\subsection{Proof of Theorem \ref{thm Jordan}(2)}

\

Now, we prove Theorem \ref{thm Jordan}(2).
\par
By Lemma \ref{lemma M}, we only need to prove the following result.
\begin{lem}\label{lemma 2}
Under the assumptions of Theorem \ref{thm Jordan},
if $\det A \neq 0$, and $M=0$,
then $\exists t_0>0$, such that $\kappa(t)$ defined for $t\in[t_0,+\infty)$ is bounded.
\end{lem}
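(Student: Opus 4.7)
The plan is to isolate the blocks that can contribute non-decaying terms to $V_1^2$ and $V_2^2$, and then show that these force the denominator of $\kappa^2$ to stay uniformly bounded below and the numerator to stay uniformly bounded above. First I would dispose of the case already covered by Lemma \ref{lemma 1}: if there is any RH or CH block whose eigenvalue has zero real part, then $\kappa(t)\to 0$ and we are done. Since $\det A\neq 0$ forbids R1 or RH blocks with eigenvalue $0$, the only remaining configuration consistent with $M=0$ is that every R1 or RH block has $\lambda<0$, every CH block has $a<0$, and at least one C2 block has $a=0$ (such a block must exist, as $M=0$ has to be realised somewhere).

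In this configuration I would invoke the explicit expressions from subsection \ref{subsection review}. For each block with $\mathrm{Re}(\lambda_i)<0$, the coordinates of $\dot r$ and $\ddot r$ carry an exponential factor $\mathrm{e}^{\mathrm{Re}(\lambda_i)t}$ multiplying a polynomial with bounded (constant or trigonometric) coefficients, so every such coordinate is uniformly bounded on $[0,+\infty)$ and in fact tends to $0$. For a C2 block with $a=0$, the relations in (\ref{align sq.der.3}) give $\dot r$ and $\ddot r$ entries that are purely trigonometric, hence uniformly bounded. Therefore every $2\times 2$ minor entering (\ref{align volume 2}) is a combination of uniformly bounded functions and is itself uniformly bounded, yielding a constant $C_2$ with $V_2^2(t)\leqslant C_2$ for all $t\geqslant 0$.

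For the denominator, $V_1^2(t)=\sum_{i,k}\dot r_{ik}^2(t)$ is a sum of nonnegative contributions. By (\ref{align sq.der.3}) with $a=0$, each C2 block with $a=0$ contributes exactly $b_i^2(r_{i1,0}^2+r_{i2,0}^2)$, which is a strictly positive constant under the standing assumption $\prod_{j=1}^n r_{j0}\neq 0$. Hence $V_1^2(t)\geqslant C_1>0$ for all $t\geqslant 0$, so $V_1^6(t)\geqslant C_1^3>0$, the curvature $\kappa(t)$ is well-defined on $[0,+\infty)$, and
\[
\kappa^2(t)=\frac{V_2^2(t)}{V_1^6(t)}\leqslant \frac{C_2}{C_1^3}\qquad (t\geqslant 0),
\]
which proves the lemma with $t_0=0$.

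The main difficulty I anticipate is bookkeeping rather than substance: in $V_2^2$ the minors mix rows from blocks of different types and with different exponential behaviour, and one has to verify uniformly that every such mixed minor is bounded. The point is elementary, however: any product or difference of uniformly bounded factors is uniformly bounded, and any factor that decays at $\infty$ is a fortiori bounded, so the block-by-block analysis of subsection \ref{subsection review} assembles directly into the required bound on $V_2^2$.
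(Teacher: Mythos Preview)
Your proof is correct and, in its core argument, somewhat cleaner than the paper's. The paper does not invoke Lemma~\ref{lemma 1} inside the proof of Lemma~\ref{lemma 2}; instead it re-runs the growth-rate comparison machinery ($\eta$ versus $\theta$, then $\chi$ versus $\xi$) and splits into the subcases $m>1$ (a CH block with $a=0$ is present, forcing $\chi<\xi$ and hence $\kappa(t)\to 0$) and $m=1$ (only C2 blocks sit on the imaginary axis, and the relevant $F(t)$ are bounded). You instead use Lemma~\ref{lemma 1} to kill the CH-with-$a=0$ case outright, which immediately reduces to the situation where every block with $\mathrm{Re}(\lambda)=0$ is a C2 block; from there your argument that all coordinates of $\dot r,\ddot r$ are uniformly bounded and that $V_1^2$ is bounded below by the constant C2 contribution is direct and correct. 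What you gain is a shorter argument with no polynomial-degree bookkeeping; what the paper's version gains is self-containment (it does not need to appeal back to Lemma~\ref{lemma 1}) and it makes explicit that in the $m>1$ subcase one actually has $\kappa(t)\to 0$, a slightly sharper statement than mere boundedness. Either route is fine for establishing the lemma as stated.
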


\begin{proof}
Assume that $\det A \neq 0$, and $M=0$.
Then $A$ has no eigenvalue $0$, and $\tilde M=M=0$.
Thus, there exist C2 or CH blocks whose eigenvalues have zero real parts in the diagonal of $A$, and
\begin{align*}
\eta\leqslant 4 M=0, \quad
\theta=6\tilde M=0.
\end{align*}

(1) If $\eta<0$, then $\lim\limits_{t\to+\infty}\kappa(t)=0$.
\par
(2) If $\eta=0$, then we compare the highest power of $t$ of terms in the form of $\mathrm{e}^{0t} t^\nu B_\omega(t)$
in the numerator and denominator of $\kappa^2(t)$, namely, $\chi$ and $\xi$,
where $B_\omega(t)$ is a bounded function.
In fact, by Remark \ref{rem sigma}, we have
\begin{align*}
\xi=6(m-1),
\end{align*}
in $V_1^6(t)$,
where $m$ denotes half of the maximum order of C2 or CH blocks with $\pm b\sqrt{-1}~(b\in\mathbb{R})$ as eigenvalues.
\par
For $V_2^2(t)$, in a C2 or CH block whose eigenvalues have zero real parts,
$\dot r_1(t)$, $\ddot r_1(t)$, $\dot r_2(t)$, $\ddot r_2(t)$ can all reach the highest power $t^{m-1}$ in the block.
\par
By (\ref{align volume 2}), we see that the maximum value $\chi$ of $\nu$ in the terms of the form $\mathrm{e}^{0t} t^\nu B_\omega(t)$ in the numerator of $\kappa^2(t)$ satisfies
\begin{align*}
\chi\leqslant 4(m-1).
\end{align*}
Therefore,
\par
(A) for $m>1$, we have $\chi<\xi$, hence $\lim\limits_{t\to+\infty}\kappa(t)=0$;
\par
(B) for $m=1$, we have $\chi=\xi=0$, thus the real Jordan blocks whose eigenvalues have zero real parts are all C2 blocks.
From (\ref{align C2 dot r}) and (\ref{align C2 ddot r}),
we see that for $i,j$ that satisfy $\mathrm{Re}(\lambda_i)=\mathrm{Re}(\lambda_j)=0$,
the function $F(t)$ in (\ref{align V2}) is bounded.
It follows that $\kappa(t)$ is a bounded function.
\par
In summary, $\exists t_0>0$, such that $\kappa(t)$ defined for $t\in[t_0,+\infty)$ is bounded.
\end{proof}

By Lemma \ref{lemma M}, \ref{lemma 2}, and Remark \ref{rem M},
if $\det A \neq 0$, and the zero solution of the system is not asymptotically stable,
then $\exists t_0>0$, such that $\kappa(t)$ defined for $t\in[t_0,+\infty)$ is bounded,
which completes the proof of Theorem \ref{thm Jordan}(2), and therefore Theorem \ref{thm Jordan} is proved.

\subsection{Proof of Theorem \ref{thm main}}\label{subsection results}

\

We prove Theorem \ref{thm main} and give several remarks in this subsection.

In what follows, we defind two subsets of $\mathbb{R}^n$ that
\begin{align*}
S&=\left\{ r(0) \Bigg| r(0)=\left(r_{10},r_{20},\cdots,r_{n0}\right)^\mathrm{T}\in\mathbb{R}^n, \mathrm{s.t.,}~\prod_{i=1}^n {r_{i0}}\neq0 \right\},
\end{align*}
and
\begin{align*}
\tilde S&=\left\{ P^{-1}v(0) \Bigg| v(0)=\left(v_{10},v_{20},\cdots,v_{n0}\right)^\mathrm{T}\in\mathbb{R}^n, \mathrm{s.t.,}~\prod_{i=1}^n {v_{i0}}\neq0 \right\}.
\end{align*}

We proved Theorem \ref{thm Jordan} in the previous subsections.
Combined with Theorem \ref{thm relation}, we have the following proposition.

\begin{prop}\label{prop v=Pr}
Take the assumptions of Theorem \ref{thm relation}, and additionally assume that $B$ is a matrix in real Jordan canonical form.
Denote by $\kappa(t)$ the first curvature of trajectory of a solution $r(t)$.
For an arbitrary initial value $r(0)\in\tilde S$, we have
\par
$(1)$ if $\lim\limits_{t\to+\infty}\kappa(t)\neq0$ or $\lim\limits_{t\to+\infty}\kappa(t)$ does not exist,
then the zero solution of the system is stable;
\par
$(2)$ if $A$ is invertible and $\lim\limits_{t\to+\infty}\kappa(t)=+\infty$, then the zero solution of the system is asymptotically stable.
\end{prop}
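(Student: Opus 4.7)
The plan is to deduce this proposition by packaging together the two pieces of machinery already available: Theorem \ref{thm Jordan}, which handles systems whose coefficient matrix is already in real Jordan canonical form, and Theorem \ref{thm relation}, which relates the first curvatures of two equivalent systems. The bridge between them is the observation that the set $\tilde S$ is defined precisely as $P^{-1}S$, so that $r(0)\in\tilde S$ is equivalent to $v(0):=Pr(0)\in S$, which is exactly the hypothesis on the initial value demanded by Theorem \ref{thm Jordan}.

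For both parts of the proposition, I would fix $r(0)\in\tilde S$ and set $v(t)=Pr(t)$, so that $v(0)=Pr(0)\in S$ and $v(t)$ is the trajectory of $\dot v = Bv$. Since by assumption $B$ is in real Jordan canonical form and $v(0)\in S$, Theorem \ref{thm Jordan} applies directly to the $v$-system. I would denote by $\kappa_r(t)$ and $\kappa_v(t)$ the first curvatures of the two trajectories, which is consistent with the notation $\kappa_{r,1}$, $\kappa_{v,1}$ of Theorem \ref{thm relation}.

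For part (1), suppose $\lim_{t\to+\infty}\kappa_r(t)\neq 0$ or does not exist. Theorem \ref{thm relation} asserts in particular that $\lim_{t\to+\infty}\kappa_v(t)=0$ if and only if $\lim_{t\to+\infty}\kappa_r(t)=0$; taking the contrapositive of the ``$\Leftarrow$'' direction, the hypothesis on $\kappa_r$ transfers verbatim to $\kappa_v$. Theorem \ref{thm Jordan}(1) then gives stability of the zero solution of $\dot v = Bv$, and the equivalence of stability under similarity (the proposition immediately following the definition of equivalent systems in Section \ref{Section Preliminaries}) yields stability of the zero solution of $\dot r = Ar$. For part (2), observe that $A=P^{-1}BP$ makes invertibility of $A$ and of $B$ equivalent; if $\lim_{t\to+\infty}\kappa_r(t)=+\infty$, then the second equivalence in Theorem \ref{thm relation} gives $\lim_{t\to+\infty}\kappa_v(t)=+\infty$, so Theorem \ref{thm Jordan}(2) delivers asymptotic stability of the $v$-system, and hence of the $r$-system by equivalence.

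There is no real obstacle in this proof: the proposition is essentially a bookkeeping step that reformulates Theorem \ref{thm Jordan} in the language of the original, not-necessarily-canonical system. The only point requiring a modicum of care is the contrapositive reading of Theorem \ref{thm relation} needed in part (1) to propagate the ``does not exist'' clause; since the equivalence there is stated as ``limit equals zero iff limit equals zero'', its negation covers both ``limit is nonzero'' and ``limit fails to exist'' in one stroke, so nothing extra is needed.
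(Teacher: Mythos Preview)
Your proposal is correct and follows precisely the paper's approach: the paper states the proposition as an immediate consequence of combining Theorem \ref{thm Jordan} with Theorem \ref{thm relation}, without spelling out any further details. Your write-up in fact supplies more of the argument than the paper does, including the explicit observation that $\tilde S = P^{-1}S$, the transfer of invertibility and stability under similarity, and the contrapositive reading of Theorem \ref{thm relation} needed for part (1).
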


Noting that the Lebesgue measure of $\mathbb{R}^n \backslash \tilde S$ is zero,
we complete the proof of Theorem \ref{thm main}.

\begin{rem}
If all eigenvalues of $A$ are real numbers, we can obtain the following proposition.
\end{rem}
\begin{prop}
Under the assumptions of Theorem \ref{thm main},
together with the assumption that $A$ is invertible,
and all eigenvalues of $A$ are real numbers,
if there exists a measurable set $E\subseteq \mathbb{R}^n$ whose Lebesgue measure is greater than $0$,
such that for all $r(0)\in E$, $\lim\limits_{t\to+\infty}\kappa(t)\neq0$ or $\lim\limits_{t\to+\infty}\kappa(t)$ does not exist,
then the zero solution of the system is asymptotically stable.
\end{prop}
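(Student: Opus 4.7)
The plan is to reduce this statement to Theorem \ref{thm main}(1) and then upgrade stability to asymptotic stability using the two extra hypotheses (all eigenvalues real, $A$ invertible) via Proposition \ref{prop asy.stable}.

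First, I would invoke Theorem \ref{thm main}(1) directly. The hypothesis on $E$ is verbatim what that theorem requires, so its conclusion applies: the zero solution of $\dot r(t)=Ar(t)$ is stable. This step needs no new computation; the work has already been done in Sections \ref{Section Relationship}--\ref{Section General}.

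Second, I would translate stability into a spectral condition via the stability direction of Proposition \ref{prop asy.stable}: every eigenvalue of $A$ satisfies $\mathrm{Re}\{\lambda_i(A)\}\leqslant 0$. By the standing hypothesis that all eigenvalues of $A$ are real, this reads $\lambda_i(A)\leqslant 0$ for $i=1,2,\cdots,n$.

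Third, I would use invertibility of $A$, which forces $0\notin\sigma(A)$, to sharpen the previous inequality to $\lambda_i(A)<0$ for every $i$. Applying the asymptotic-stability direction of Proposition \ref{prop asy.stable} then yields that the zero solution of $\dot r(t)=Ar(t)$ is asymptotically stable, which is what we want.

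There is no real obstacle here; the proof is essentially a bookkeeping argument. The only substantive observation is that the combination of \emph{real spectrum} and \emph{nonsingularity} collapses the weak inequality $\mathrm{Re}\{\lambda_i(A)\}\leqslant 0$ coming from plain stability to the strict inequality $\mathrm{Re}\{\lambda_i(A)\}<0$ characterizing asymptotic stability, so the gap between Theorem \ref{thm main}(1) and the desired stronger conclusion closes automatically under the extra hypotheses of this proposition.
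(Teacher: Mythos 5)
Your proof is correct. The reduction is sound: the hypothesis on $E$ is exactly that of Theorem \ref{thm main}(1), so stability follows; Proposition \ref{prop asy.stable} then gives $\mathrm{Re}\{\lambda_i(A)\}\leqslant 0$, reality of the spectrum turns this into $\lambda_i(A)\leqslant 0$, and invertibility excludes $0$, so $\lambda_i(A)<0$ and asymptotic stability follows from the other direction of Proposition \ref{prop asy.stable}. The paper reaches the same conclusion by a contrapositive argument at the level of the real Jordan form: it assumes the zero solution is \emph{not} asymptotically stable, notes that invertibility plus real spectrum then forces a strictly positive eigenvalue, i.e.\ $M>0$, applies Lemma \ref{lemma M} to get $\lim_{t\to+\infty}\kappa(t)=0$, and transfers back via Theorem \ref{thm relation}. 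The two routes differ only in which previously established results they lean on: you treat Theorem \ref{thm main}(1) as a black box and finish with pure spectral bookkeeping, which is more modular; the paper bypasses Theorem \ref{thm main}(1) and uses only Lemma \ref{lemma M} (not Lemma \ref{lemma 1}), since under real spectrum and nonsingularity the only obstruction to asymptotic stability is a positive eigenvalue. Both hinge on the same observation that you state explicitly at the end: real spectrum plus nonsingularity collapses the gap between the weak inequality characterizing stability and the strict inequality characterizing asymptotic stability.
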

\begin{proof}
Under the assumptions of Theorem \ref{thm Jordan},
additionally assuming that all eigenvalues of $A$ are real numbers,
if $\det A\neq0$, and the zero solution of the system is not asymptotically stable,
then $\exists \lambda\in\sigma(A)\subseteq\mathbb{R},\ \mathrm{s.t.,}\ \lambda>0$, namely, $M>0$.
By Lemma \ref{lemma M}, we have $\lim\limits_{t\to+\infty}\kappa(t)=0$.
\par
Combined with Theorem \ref{thm relation}, we complete the proof.
\end{proof}

\begin{rem}
From the expression of $\kappa(t)$ in all cases, we see that the initial value $r(0)\in S$ does not affect the trend of the first curvature. Thus we obtain the following theorem.
\end{rem}

\begin{thm}\label{thm initial}
Under the assumptions of Theorem \ref{thm Jordan},
if for some initial value $r(0)\in S$,
we have $\lim\limits_{t\to+\infty}\kappa(t)=0$ (or $+\infty$, or a constant $C>0$, or $\kappa(t)$ is a bounded function, respectively),
then for an arbitrary $r(0)\in S$,
we still have $\lim\limits_{t\to+\infty}\kappa(t)=0$ (or $+\infty$, or a constant $\tilde{C}>0$, or $\kappa(t)$ is a bounded function, respectively).
\end{thm}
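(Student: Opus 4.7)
The plan is to observe that the detailed asymptotic analysis in Sections \ref{Section Jordan Blocks} and \ref{Section General} already, implicitly, tracks how $\kappa(t)$ depends on $r(0)$: the qualitative limit behavior is controlled entirely by the Jordan structure of $A$, while the initial value $r(0)\in S$ only affects the numerical values of certain leading coefficients without changing which regime occurs. So the proof will be essentially a re-reading of the earlier work, organized to expose this separation.

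First, I would argue that the critical invariants $M$, $\tilde M$, $\eta$, $\theta$, $\xi$, $\chi$ introduced in Section \ref{Section General} are functions of $A$ alone: they are read off from the eigenvalues and block sizes in the real Jordan canonical form. Hence the comparisons $\eta$ vs.\ $\theta$ (exponentials) and, in the tie case, $\chi$ vs.\ $\xi$ (powers of $t$) that determined whether $\kappa(t)\to 0$, $\to+\infty$, tends to a positive constant, or remains bounded, give the same answer for every $r(0)\in S$. Consequently the \emph{type} of asymptotic behavior of $\kappa(t)$ is constant on $S$.

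Next, I would show that in the \emph{tie} subcases (when the exponents and polynomial degrees in the numerator and denominator of $\kappa^{2}(t)$ match) the leading coefficients remain strictly positive for all $r(0)\in S$. Block by block: for a $p\times p$ RH block with $\lambda\neq 0$, the leading coefficient involves $r_{p,0}^{2}$ (the bottom coordinate of the block); for a C2 block, it is $(a^{2}+b^{2})(r_{10}^{2}+r_{20}^{2})$; for a $2m\times 2m$ CH block, both $C$ in $V_{1}^{2}(t)$ and $\tilde C$ in $V_{2}^{2}(t)$ are proportional to $r_{m1,0}^{2}+r_{m2,0}^{2}$ (see equations (\ref{align CH V_1^6}) and (\ref{align CH f})). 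All of these coefficients are positive whenever the indicated coordinates are nonzero, and by the definition of $S$ every coordinate is nonzero. Therefore, whenever $\lim_{t\to+\infty}\kappa(t)$ is a positive constant for one $r(0)\in S$, it is a (possibly different) positive constant for every $r(0)\in S$; similarly the $0$ and $+\infty$ conclusions transfer unchanged.

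The main obstacle is the bounded-but-not-convergent case, which by Lemma \ref{lemma 2}(B) only arises when $\det A\neq 0$, $M=0$, and all eigenvalues with zero real part come from C2 blocks. Here I would write $\kappa^{2}(t)$ as a quotient of trigonometric polynomials whose coefficients are quadratic forms in the entries of $r(0)$, and show that the denominator is bounded below by a positive constant on $[t_{0},+\infty)$: indeed, from (\ref{align V1}) the leading term of $V_{1}^{6}(t)$ contains the sum of the $(a^{2}+b^{2})(r_{i1,0}^{2}+r_{i2,0}^{2})$ contributions of the relevant C2 blocks, and these are strictly positive for $r(0)\in S$. The numerator is a linear combination of bounded trigonometric functions whose coefficients depend continuously on $r(0)$, hence is bounded above. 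Combining these, $\kappa(t)$ is bounded on $[t_{0},+\infty)$ for every $r(0)\in S$ as soon as it is for one such choice. This completes the case analysis and hence the theorem.
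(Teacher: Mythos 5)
Your proposal is correct and follows essentially the same route as the paper, which justifies this theorem only by the one-line remark that ``the initial value $r(0)\in S$ does not affect the trend of the first curvature'' as read off from the explicit expressions in Sections \ref{Section Jordan Blocks} and \ref{Section General}; your write-up merely makes that observation explicit (the regime is decided by $M,\tilde M,\theta,\xi$, which depend on $A$ alone, and the relevant leading coefficients are positive on $S$). The only nuance worth noting is that $\eta$ and $\chi$ are a priori only upper-bounded independently of $r(0)$ rather than exactly determined, but since the paper's case analysis only ever uses those upper bounds together with the exact positive leading term of $V_1^6(t)$, your second and third paragraphs already supply everything needed.
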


Combined with Theorem \ref{thm relation}, we have the following corollary.

\begin{cor}\label{cor initial}
Under the assumptions of Proposition \ref{prop v=Pr},
if for some initial value $r(0)\in\tilde S$,
we have $\lim\limits_{t\to+\infty}\kappa(t)=0$ (or $+\infty$, or $\kappa(t)$ is a bounded function, respectively),
then for an arbitrary $r(0)\in\tilde S$,
we still have $\lim\limits_{t\to+\infty}\kappa(t)=0$ (or $+\infty$, or $\kappa(t)$ is a bounded function, respectively).
\end{cor}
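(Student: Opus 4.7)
The plan is to chain Theorem \ref{thm initial} with Theorem \ref{thm relation} through the equivalence transformation $v(t)=Pr(t)$. First I would observe that, directly from the definitions, $r(0)\in\tilde S$ if and only if $v(0)=Pr(0)\in S$, so the invertible matrix $P$ maps $\tilde S$ bijectively onto $S$. Under the hypotheses of Proposition \ref{prop v=Pr}, the matrix $B=PAP^{-1}$ is in real Jordan canonical form, so Theorem \ref{thm initial} applies to the equivalent system $\dot v(t)=Bv(t)$ and tells us that each of the asymptotic properties ``$\kappa_v(t)\to 0$'', ``$\kappa_v(t)\to+\infty$'', and ``$\kappa_v(t)$ is bounded'' is either satisfied for every initial condition $v(0)\in S$ or for none of them.

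Next, I would invoke Theorem \ref{thm relation}, which gives the three logical equivalences
\begin{align*}
\lim_{t\to+\infty}\kappa_{r}(t)=0 &\iff \lim_{t\to+\infty}\kappa_{v}(t)=0,\\
\lim_{t\to+\infty}\kappa_{r}(t)=+\infty &\iff \lim_{t\to+\infty}\kappa_{v}(t)=+\infty,\\
\kappa_{r}(t)\text{ bounded} &\iff \kappa_{v}(t)\text{ bounded},
\end{align*}
applied here with $m=2$ and $i=1$, i.e., to the first curvature. Combining this with the previous paragraph, suppose some $r(0)\in\tilde S$ satisfies one of the three properties; pushing it through $P$, the corresponding $v(0)\in S$ satisfies the same property for $\kappa_v$; by Theorem \ref{thm initial} every $v'(0)\in S$ satisfies it for $\kappa_v$; pulling back by $P^{-1}$, every $r'(0)\in\tilde S$ satisfies it for $\kappa_r$. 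This is exactly the statement of the corollary.

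I do not expect a serious obstacle: the proof is essentially a bookkeeping exercise that glues together two results already proved. The only mild subtlety worth flagging is that Theorem \ref{thm initial} also mentions the case ``$\kappa$ converges to a positive constant $C$'', which is dropped from the corollary; this is deliberate, because Theorem \ref{thm relation} only controls $\kappa_v/\kappa_r$ within fixed multiplicative bounds depending on the singular values of $P$, and therefore preserves the three qualitative properties listed in the corollary but not the exact numerical value of a finite positive limit.
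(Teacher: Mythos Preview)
Your proposal is correct and matches the paper's approach exactly: the paper simply states that the corollary follows by combining Theorem \ref{thm initial} with Theorem \ref{thm relation}, which is precisely the chaining argument you spell out. Your additional remark explaining why the ``limit equals a positive constant'' case is omitted is accurate and a nice clarification beyond what the paper writes.
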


Moreover, we note that if $A$ is a matrix in real Jordan canonical form whose all eigenvalues are real numbers, then for any given $r(0)\in \mathbb{R}^n$,
we have $\lim\limits_{t\to+\infty}\kappa(t)=0$ or $C$ or $+\infty$, where $C>0$ is a constant.

\section{Examples}\label{Section Examples}

In this section, we give two examples, which correspond to each case of Theorem \ref{thm main}, respectively.

\subsection*{Example 1 (Theorem \ref{thm main}(1))}

\

Let $r(t)=\left(r_1(t),r_2(t),r_3(t),r_4(t)\right)^\mathrm{T}\in\mathbb{R}^4$, and
\begin{align*}
A=
\begin{pmatrix}
\phantom{-}10 & -20& \phantom{-}20 & -15 \\[0.7ex]
-35 & \phantom{-}20& -45 & \phantom{-}15 \\[0.7ex]
-23 & \phantom{-}26& -33 & \phantom{-}21 \\[0.7ex]
\phantom{-}36 & -32& \phantom{-}46 & -27
\end{pmatrix}.
\end{align*}
Then (\ref{system1}) becomes a four-dimensional linear time-invariant system.
If the initial value $r(0)=\left(r_{10},r_{20},r_{30},r_{40}\right)^\mathrm{T}\in\mathbb{R}^4$ satisfies
$v_{20}v_{30}v_{40}\neq0$,
where $v_{20}= -r_{10} +2r_{20} +r_{30} +3r_{40}$,
$v_{30}= r_{10} +3r_{20} +r_{30} +3r_{40}$, and
$v_{40}= r_{10} -2r_{20} +r_{30} -2r_{40}$,
then the square of the first curvature $\kappa(t)$ of curve $r(t)$ is
\begin{align*}\textstyle
\kappa^2(t)
=
\frac{5 \mathrm{e}^{40 t}
\left(
59 \mathrm{e}^{20 t} v_{20}^2 v_{30}^2
+396 \mathrm{e}^{15 t} v_{20}^2 v_{30} v_{40}
+756 \mathrm{e}^{10 t} v_{20}^2 v_{40}^2
-384 \mathrm{e}^{10 t} v_{20} v_{30}^2 v_{40}
-1188 \mathrm{e}^{5 t} v_{20} v_{30} v_{40}^2
+666 v_{30}^2 v_{40}^2
\right)}
{2 \left(
10 \mathrm{e}^{20 t} v_{20}^2
+22 \mathrm{e}^{15 t} v_{20} v_{30}
+102 \mathrm{e}^{10 t} v_{20} v_{40}
+18 \mathrm{e}^{10 t} v_{30}^2
+132 \mathrm{e}^{5 t} v_{30} v_{40}
+279 v_{40}^2
\right)^3},
\end{align*}
and we have
\begin{align*}
\lim\limits_{t\to+\infty}\kappa(t)
=\frac{\sqrt{59} \left|v_{30}\right|}{20v_{20}^2}
=\frac{\sqrt{59} \left|r_{10} +3r_{20} +r_{30} +3r_{40}\right|}
{20\left(-r_{10} +2r_{20} +r_{30} +3r_{40}\right)^2}
>0.
\end{align*}
By Theorem \ref{thm main}(1), the zero solution of the system is stable.

The graph of function $\kappa(t)$ is shown in Figure \ref{fig 1}, where $r(0)=\left(1,1,1,2\right)^\mathrm{T}$.

In another way, the eigenvalues of $A$ are $-15$, $-10$, $-5$ and $0$, thus by Proposition \ref{prop asy.stable}, the zero solution of the system is stable.

\subsection*{Example 2 (Theorem \ref{thm main}(2))}

\

Let $r(t)=\left(r_1(t),r_2(t),\cdots,r_5(t)\right)^\mathrm{T}\in\mathbb{R}^5$, and
\begin{align*}
A&=
\begin{pmatrix}
\phantom{-}0 & \phantom{-}4 & \phantom{-}5 & \phantom{-}4 & \phantom{-}1 \\[0.7ex]
-2 & -2 & \phantom{-}1 & -2 & -1 \\[0.7ex]
-2 & -4 & -3 & \phantom{-}4 & \phantom{-}3 \\[0.7ex]
\phantom{-}2 & \phantom{-}4 & \phantom{-}1 & -2 & \phantom{-}1 \\[0.7ex]
-2 & -4 & -5 & -4 & -3
\end{pmatrix}.
\end{align*}
Then (\ref{system1}) becomes a five-dimensional linear time-invariant system, and $\det A=-800\neq0$.
If $r(0)=\left(r_{10},r_{20},\cdots,r_{50}\right)^\mathrm{T}\in\mathbb{R}^5$ satisfies
$\left( r_{30}+r_{40} \right)\left( r_{40}+r_{50} \right)\neq0$,
then the square of the first curvature $\kappa(t)$ of curve $r(t)$ is
\begin{align*}
\kappa^2(t)
=
\frac{ \mathrm{e}^{4t} \left( \tilde C t^{4} + \sum_{\varphi=0}^{3} \tilde B_\varphi(t) t^{\varphi} \right) }
{ \bar C(t) t^{6} + \sum_{\psi=0}^{5} \bar B_\psi(t) t^{\psi} },
\end{align*}
where $\tilde C=102~\!400~\!C^2>0$ is a constant,
$\tilde B_\varphi(t) ~( \varphi=0,1,2,3)$,
$\bar B_\psi(t) ~( \psi=0,1,\cdots,5)$ and
$\bar C(t)=\left\{ 20~\!C\left(5-3\sin(8t+\rho)\right) \right\}^3\in\left[64~\!000~\!C^3,4~\!096~\!000~\!C^3\right]$
are bounded functions,
where $C=\left( r_{30}+r_{40} \right)^2+\left( r_{40}+r_{50}\right)^2>0$
and $\rho\in\mathbb{R}$ are constants.
Hence
\begin{align*}
\lim\limits_{t\to+\infty}\kappa(t)
=+\infty.
\end{align*}
By Theorem \ref{thm main}(2), the zero solution of the system is asymptotically stable.

The graph of function $\kappa(t)$ is shown in Figure \ref{fig 2}, where $r(0)=\left(1,1,1,1,1\right)^\mathrm{T}$.

In another way, the eigenvalues of $A$ are $\lambda_1=\lambda_2=-2+4\sqrt{-1}$, $\lambda_3=\lambda_4=-2-4\sqrt{-1}$ and $\lambda_5=-2$, thus by Proposition \ref{prop asy.stable}, the zero solution of the system is asymptotically stable.

\begin{figure}
\begin{minipage}[t]{0.5\linewidth}
\centering
\includegraphics[height=4cm]{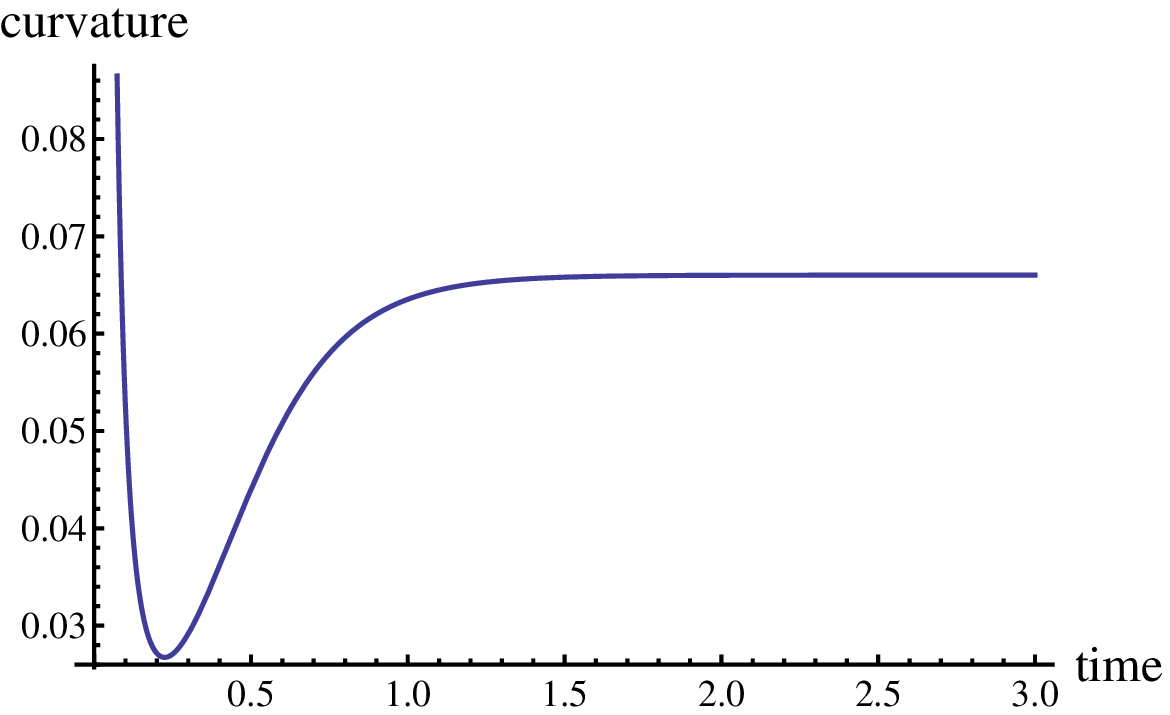}
\caption{Example 1}
\label{fig 1}
\end{minipage}%
\begin{minipage}[t]{0.5\linewidth}
\centering
\includegraphics[height=4cm]{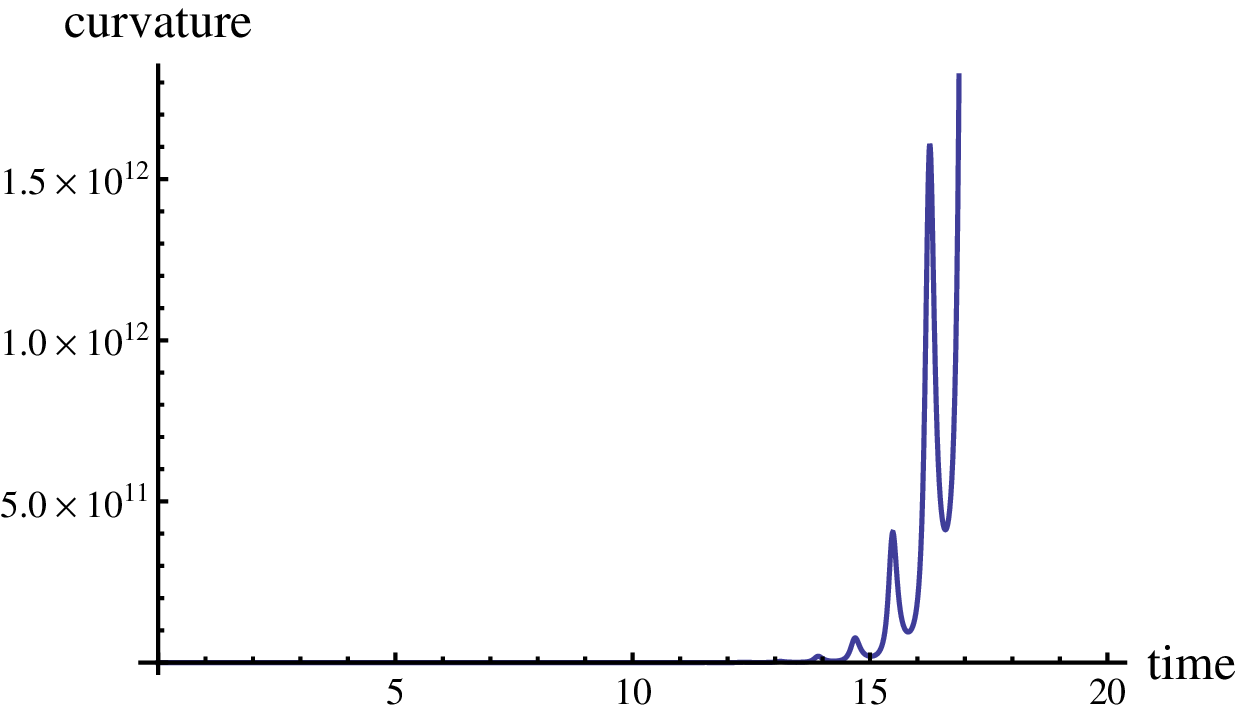}
\caption{Example 2}
\label{fig 2}
\end{minipage}
\end{figure}

\section{Conclusion}\label{Section Conclusion}

The main result of this paper, Theorem \ref{thm main}, is proved.
Firstly, through the analysis of higher curvatures of trajectories of systems,
we give the relationship between curvatures of trajectories of two equivalent linear time-invariant systems.
Secondly, for each type of real Jordan blocks,
we analyze the relationship between the first curvature and stability.
Finally, we prove a result for real Jordan canonical form, which completes the proof of the main theorem.
\par
As Theorem \ref{thm main} shows, two sufficient conditions for stability of the zero solution of linear time-invariant systems,
based on the first curvature, are given.
For each case of the theorem, we give an example to illustrate the result.
%which demonstrates the advantages of our approach in practical problems.
\par
Further, we will investigate nonlinear control for the stability by using geometric description.

\section*{Acknowledgment}
The research is supported partially by science and technology innovation project of Beijing Science and Technology Commission (Z161100005016043).

\end{document}